\newcommand{\PP}{{\mathbb{P}}}
\newcommand{\url}[1]{{\tt #1}}
\DeclareMathOperator{\len}{lh}
\DeclareMathOperator{\dom}{dom}
\DeclareMathOperator{\cf}{cf}
\DeclareMathOperator{\Col}{Col}
\DeclareMathOperator{\Add}{Add}
\DeclareMathOperator{\supp}{supp}
\def\MPB{{\mathbb{P}}}
\def\MQB{{\mathbb{Q}}}
\def\MRB{{\mathbb{R}}}
\def\k{\kappa}
\def\l{\lambda}
\def\a{\alpha}
\def\b{\beta}
\newtheorem{theorem}{Theorem}[section]
\newtheorem{lemma}[theorem]{Lemma}
\newtheorem{definition}[theorem]{Definition}
\newtheorem{open Question}[theorem]{Open Question}
\newtheorem{remark}[theorem]{Remark}
\newtheorem{notation}[theorem]{Notation}
\newtheorem{claim}[theorem]{Claim}
\numberwithin{equation}{section}
\def\MPB{{\mathbb{P}}}
\def\MQB{{\mathbb{Q}}}
\def\MRB{{\mathbb{R}}}
\def\k{\kappa}
\def\l{\lambda}
\def\a{\alpha}
\def\b{\beta}
\def\l{\lambda}
\def\rmark{\mbox{$\rm\bf\rule{0.06em}{1.45ex}\kern-0.05em R$}}
\def\pmark{\mbox{$\rm\bf\rule{0.06em}{1.45ex}\kern-0.05em P$}}
\def\nmark{\mbox{$\rm\bf\rule{0.06em}{1.45ex}\kern-0.05em N$}}
\def\vdash{\mbox{$\rm\| \kern-0.13em -$}}
\newcommand{\lusim}[1]{\smash{\underset{\raisebox{1.2pt}[0cm][0cm]{$\sim$}}
{{#1}}}}
\newcommand{\ZFC}{\hbox{ZFC}}
\newcommand{\GCH}{\hbox{GCH}}
\newcommand{\CH}{\hbox{CH}}
\newcommand{\Lim}{\hbox{Lim }}
\def\l{\lambda}
\def\rmark{\mbox{$\rm\bf\rule{0.06em}{1.45ex}\kern-0.05em R$}}
\def\pmark{\mbox{$\rm\bf\rule{0.06em}{1.45ex}\kern-0.05em P$}}
\def\nmark{\mbox{$\rm\bf\rule{0.06em}{1.45ex}\kern-0.05em N$}}
\def\vdash{\mbox{$\rm\| \kern-0.13em -$}}
\begin{document}

\title[Specializing trees and answer to a question of Williams]{Specializing trees and answer to a question of Williams}

\author[ M. Golshani and S. Shelah]{Mohammad Golshani and Saharon Shelah}

\thanks{ The first author's research has been supported by a grant from IPM (No. 96030417). He also thanks
Camilo Arosemena and Yair Hayut for   the helpful comments.}
\thanks{ The second
author's research has been partially supported by European Research Council grant 338821. Publication 1120 on Shelah's list.}
\thanks{The authors thank Ashutosh Kumar  for  some useful comments and corrections. They also thank the referee of the paper for some useful comments and
corrections which improved the presentation of the paper.}
\maketitle

\begin{abstract}
We show that if $\cf(2^{\aleph_0})=\aleph_1,$ then any non-trivial $\aleph_1$-closed forcing
notion of size $\leq 2^{\aleph_0}$ is forcing equivalent to $\Add(\aleph_1, 1),$
the Cohen forcing for adding a new Cohen subset of $\omega_1.$ We also produce, relative to the existence of suitable large cardinals, a model of $\ZFC$ in which $2^{\aleph_0}=\aleph_2$ and all $\aleph_1$-closed forcing
notion of size $\leq 2^{\aleph_0}$ collapse $\aleph_2,$ and hence are forcing equivalent to $\Add(\aleph_1, 1).$ These results answer a
question of Scott Williams from 1978. We also extend a result of Todorcevic and Foreman-Magidor-Shelah by showing that  it is consistent that every partial order which adds a new subset of $\aleph_2,$ collapses $\aleph_2$ or $\aleph_3.$
\end{abstract}
\maketitle

\section{introduction}
For an infinite cardinal $\kappa,$ let $\Add(\kappa, 1)$ denote the Cohen forcing for adding a new Cohen subset of $\kappa$; thus conditions in $\Add(\kappa, 1)$ are partial functions
$p: \kappa \to \{0, 1\}$ of size less than $\kappa$, ordered by reverse inclusion. The forcing is  $\cf(\kappa)$-closed and satisfies  $(2^{<\kappa})^+$-c.c., in particular, if $\kappa$ is regular and $2^{<\kappa}=\kappa,$ then it preserves all cardinals.

It is well-known that if the continuum hypothesis, $\CH$, holds, then any $\aleph_1$-closed forcing notion of size continuum is forcing equivalent to
$\Add(\aleph_1, 1).$ In \cite{williams0} (see also \cite{williams1}), Scott Williams asked if the converse is also true, i.e., if $\CH$
follows from the assumption ``any $\aleph_1$-closed forcing notion of size continuum is forcing equivalent to the Cohen forcing
$\Add(\aleph_1, 1)$''.   We will show that  $\cf(2^{\aleph_0})=\aleph_1$ is sufficient to conclude that all $\aleph_1$-closed forcing notions of size continuum are forcing equivalent to
$\Add(\aleph_1, 1).$ Since  $\cf(2^{\aleph_0})=\aleph_1$ is consistent with $\neg \CH$, this gives a negative answer to Williams question.
\begin{theorem}
\label{main theorem1}
Assume $\cf(2^{\aleph_0})=\aleph_1.$ Then any non-trivial $\aleph_1$-closed forcing notion of size $\leq 2^{\aleph_0}$ is forcing equivalent to $\Add(\aleph_1, 1).$
\end{theorem}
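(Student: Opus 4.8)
The plan is to reduce to a canonical tree forcing. Since the statement concerns non-trivial forcings I would take ``non-trivial'' to mean that every condition of $\mathbb{P}$ has two incompatible extensions, and set $\lambda:=2^{\aleph_0}$. A first, purely combinatorial observation: in any $\aleph_1$-closed poset in which every condition splits, below every condition there is an antichain of size $\lambda$. Indeed, below a given $p$ one builds a tree $\langle p_s:s\in{}^{<\omega}2\rangle$ with $p_{s^\frown i}\le p_s$ and $p_{s^\frown 0}\perp p_{s^\frown 1}$ using splitting, and then uses $\aleph_1$-closedness to choose, for each $b\in{}^{\omega}2$, a condition $p_b$ below $\{p_{b\restriction n}:n<\omega\}$; distinct $b,b'$ split at some $n$, so the $p_b$ form an antichain of size $2^{\aleph_0}=\lambda$. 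With the hypothesis $|\mathbb{P}|\le\lambda$ this forces $|\mathbb{P}|=\lambda$, and likewise $|{}^{<\omega_1}\lambda|=\lambda$, where ${}^{<\omega_1}\lambda=\bigcup_{\alpha<\omega_1}{}^{\alpha}\lambda$ is the tree of countable sequences of ordinals $<\lambda$ under reverse extension.

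The heart of the argument is to build a dense subset of $\mathbb{P}$ order-isomorphic to ${}^{<\omega_1}\lambda$; this yields the theorem, since ${}^{<\omega_1}\lambda$ is forcing equivalent to $\Add(\aleph_1,1)$ (last paragraph). I would construct conditions $\langle b_t:t\in{}^{<\omega_1}\lambda\rangle$ by recursion on $\len(t)$ so that: (i) $b_\emptyset=1_{\mathbb{P}}$ and $t\subseteq t'\Rightarrow b_{t'}\le b_t$; (ii) for each $t$ the set $\{b_{t^\frown\gamma}:\gamma<\lambda\}$ is a maximal antichain below $b_t$ of size $\lambda$, which is possible by the splitting lemma; (iii) for limit $\len(t)$, $b_t$ is a lower bound of the branch $\langle b_{t\restriction\alpha}:\alpha<\len(t)\rangle$, furnished by $\aleph_1$-closedness; and (iv) $D:=\{b_t:t\in{}^{<\omega_1}\lambda\}$ is dense in $\mathbb{P}$. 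Given such a family, $t\mapsto b_t$ is an isomorphism of ${}^{<\omega_1}\lambda$ onto $D$: monotonicity is (i), and if $t,t'$ are incomparable they first diverge below a common node $s$, so $b_t$ and $b_{t'}$ lie below distinct immediate successors of $b_s$ and hence, by (ii), are incompatible. Therefore $\mathbb{P}$ is forcing equivalent to ${}^{<\omega_1}\lambda$, equivalently to $\Col(\aleph_1,\lambda)$.

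The hypothesis $\cf(2^{\aleph_0})=\aleph_1$ is used exactly to secure clause (iv). Fix a decomposition $\mathbb{P}=\bigcup_{i<\omega_1}\mathbb{P}_i$ with $|\mathbb{P}_i|<\lambda$ for every $i$ — possible precisely because $\cf(\lambda)=\aleph_1$, and the single place where the proof would break down were $\cf(2^{\aleph_0})>\aleph_1$ — and carry out the recursion so that by the end of stage $i$ every $q\in\mathbb{P}_i$ has some $b_t\le q$. When the current maximal antichain is $M$, each $q\in\mathbb{P}_i$ is compatible with some $b_t\in M$, and since $|\mathbb{P}_i|<\lambda$ while $b_t$ splits into $\lambda$ many conditions there is room to dedicate one successor of $b_t$ to an extension of $b_t$ below $q$, for all relevant $q$ at once, while still completing each successor family to a maximal antichain of size $\lambda$. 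As every condition lies in some $\mathbb{P}_i$, $D$ becomes dense. I expect the genuinely delicate point — the main obstacle — to be keeping the recursion coherent at limit levels: the limit levels $\{b_t:\len(t)=\delta\}$ must stay predense, which forces one to build, for each condition being served, a branch along which it remains compatible and to take the limit node on that branch below it; this is where $\aleph_1$-closedness does work beyond merely producing some lower bound at limits. Passing to the Boolean completion $\mathbb{B}$ of $\mathbb{P}$ is the natural device: infima of threads through a decreasing sequence of maximal antichains form a maximal antichain refining all of them, and $\aleph_1$-closedness of $\mathbb{P}$ keeps those infima nonzero.

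Finally, ${}^{<\omega_1}\lambda$ is forcing equivalent to $\Add(\aleph_1,1)$ when $\lambda=2^{\aleph_0}$: fix a bijection $\gamma\mapsto r_\gamma$ of $\lambda$ onto ${}^{\omega}2$ and send $\langle\gamma_\xi:\xi<\eta\rangle\in{}^{<\omega_1}\lambda$ to the concatenation of $r_{\gamma_0},r_{\gamma_1},\dots$, an element of ${}^{\omega\cdot\eta}2\subseteq{}^{<\omega_1}2$; this is an isomorphism of ${}^{<\omega_1}\lambda$ onto a dense subset of ${}^{<\omega_1}2$, and ${}^{<\omega_1}2$ is dense in $\Add(\aleph_1,1)$. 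Composing, $\mathbb{P}$ and $\Add(\aleph_1,1)$ have isomorphic dense subsets and so are forcing equivalent. (When $\CH$ holds one has $\lambda=\aleph_1$ and the construction degenerates to the classical argument; the content is the case in which $2^{\aleph_0}$ is singular of cofinality $\aleph_1$. The construction also shows in passing that $\mathbb{P}$ collapses $2^{\aleph_0}$ onto $\aleph_1$, which is exactly the feature that may fail when $\cf(2^{\aleph_0})>\aleph_1$.)
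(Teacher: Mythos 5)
Your overall architecture coincides with the paper's endgame: build, level by level along $\omega_1$, a $\lambda$-branching tree of maximal antichains whose union is dense in $\mathbb{Q}$, use $\cf(2^{\aleph_0})=\aleph_1$ to schedule a decomposition $\mathbb{Q}=\bigcup_{i<\omega_1}\mathbb{Q}_i$ with $|\mathbb{Q}_i|<\lambda$, and identify the resulting dense set with a dense subset of $\Col(\aleph_1,\lambda)\simeq\Add(\aleph_1,1)$. The splitting lemma, the treatment of limit levels (taking infima of branches in $RO(\mathbb{Q})$, or equivalently a maximal antichain of lower bounds of each branch, with $\aleph_1$-closedness guaranteeing both nonzeroness and predensity via a threading argument), and the final identification of ${}^{<\omega_1}\lambda$ with $\Add(\aleph_1,1)$ are all fine. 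But you have misplaced the difficulty: the limit levels are not the delicate point; the serving step is, and as written it has a genuine gap.

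The gap is the sentence ``there is room to dedicate one successor of $b_t$ to an extension of $b_t$ below $q$, for all relevant $q$ at once, while still completing each successor family to a maximal antichain of size $\lambda$.'' To serve all $q\in\mathbb{Q}_i$ at level $i+1$ you must produce, for the family of designated common extensions $r_q\le q, b_{t(q)}$, a system of \emph{pairwise incompatible} $a_q\le r_q$ --- a disjoint refinement of a family of size $<\lambda$ --- since all the $a_q$ sit inside the single antichain that is level $i+1$. Cardinality ($|\mathbb{Q}_i|<\lambda$ versus $\lambda$ successors) does not give this: an $\aleph_1$-closed poset in which every condition has an antichain of size $\lambda$ below it can still have maximal antichains of size $2$ below a given condition (e.g.\ ${}^{<\omega_1}2$), so a greedy recursion choosing $a_{q_\xi}\le r_{q_\xi}$ incompatible with the earlier $a_{q_{\xi'}}$ can get stuck: the earlier choices may already be predense below $r_{q_\xi}$. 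Supplying this refinement is exactly the content of the paper's hard work (Lemmas 2.4 through \ref{existence of suitable dense subset} and the two claims inside): one first builds a name $\lusim{\tau}$ for a fresh function $\omega_1\to 2$, shows by a perfect-tree argument that below every $p$ there is a countable limit $\delta_p$ at which $\lusim{\tau}\restriction\delta_p$ can be forced to take $2^{\aleph_0}$ pairwise distinct values, and then diagonalizes: when refining the (up to $2^{\aleph_0}$-sized) family $I^1_\delta=\{p:\delta_p=\delta\}$ there is always an unused value of $\lusim{\tau}\restriction\delta$ to force, and forcing distinct values guarantees incompatibility. Only after obtaining in this way a dense subset of $\mathbb{Q}$ that is a union of $\aleph_1$ many maximal antichains does the tree construction you describe go through, because refining one given maximal antichain per level is easy. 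So your proposal reproduces the (correct) outer shell of the argument but omits precisely its main lemma.
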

\begin{remark}
$(1)$ We can replace $\aleph_1, 2^{\aleph_0}$ by $\kappa=\mu^+, 2^\mu$ resp.,  with $\cf(2^\mu)=\kappa;$ or by $\kappa, 2^{\mu}$ resp.,  if $\kappa$
is weakly inaccessible, $\mu<\kappa$, $2^\mu=2^{<\kappa}$ and  $\cf(2^\mu)=\kappa$.

$(2)$ If $2^{\aleph_0}=2^{\aleph_1},$ then $\Add(\aleph_2, 1)$ is $\aleph_1$-closed of size continuum, but it is not forcing equivalent to $\Add(\aleph_1,1)$.
\end{remark}

On the other hand, it is not difficult to prove the consistency of ``$2^{\aleph_0}=\aleph_2$  and there exists a non-trivial $\aleph_1$-closed (but not $\aleph_2$-closed) forcing notion of size $\aleph_2$ which preserves all cardinals'' (see \cite{g-h}).
So it is natural to ask if we can have the same result as in Theorem \ref{main theorem1} with
 $2^{\aleph_0}$ being regular. We show that this is indeed the case, if we assume the existence of  large cardinals.

Recall that an uncountable cardinal $\kappa$ is  \emph{supercompact}  if for every cardinal $\l > \k$ there exists a non-trivial elementary embedding
 $j: V \rightarrow M$ with critical point $\k$ such that $j(\k)> \l$ and $^{\l}M \subseteq M$. It is \emph{$2$-Mahlo} if
 $\{\mu < \k: \mu $ is a Mahlo cardinal$\}$ is stationary in $\k.$
 \begin{theorem}
 \label{main theorem2}
 Assume $\kappa$ is a supercompact cardinal and $\lambda>\kappa$ is a $2$-Mahlo cardinal. Then there is a generic extension of the universe in which the following hold:

 $(a)$ $2^{\aleph_0}=\kappa=\aleph_2,$

 $(b)$ $2^{\aleph_1}=\lambda=\aleph_3,$

 $(c)$ Any $\aleph_1$-closed forcing notion of size $\leq \aleph_2$ collapses $\aleph_2$ into $\aleph_1,$
 in particular it is forcing equivalent to $\Add(\aleph_1, 1).$
 \end{theorem}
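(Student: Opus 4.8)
The plan is to build the model in two stages: first a "preparation" forcing over the supercompact $\kappa$ to make it indestructibly supercompact while arranging a Mahlo-type reflection below $\lambda$, then a carefully chosen iteration of length $\lambda$ that specializes trees (this is the technique advertised in the title) so as to kill potential witnesses to the failure of (c). The target is that after the iteration $\kappa$ has become $\aleph_2$, $\lambda$ has become $\aleph_3$, the continuum is $\kappa$, and every $\aleph_1$-closed poset $\QQ$ of size $\le\aleph_2=\kappa$ collapses $\kappa$. First I would use Laver's theorem (or Laver preparation) to pass to a model where $\kappa$ is supercompact and its supercompactness is indestructible under $\kappa$-directed closed forcing; then I force with $\Add(\kappa,\lambda)$ (or a Levy-type collapse interleaved appropriately) to arrange $2^{\aleph_0}$ below, but actually the continuum must end up $\aleph_2$, so the bottom part is a forcing of size $\le\kappa$ making $2^{\aleph_0}=\kappa$ while keeping $\kappa$ inaccessible; a natural choice is to first collapse to make $\kappa=\aleph_2$ only at the very end, via a reverse-Easton iteration that at each inaccessible $\alpha<\kappa$ does $\Add(\alpha,\alpha^{++})$-style forcing, combined with a final $\Col(\aleph_1,{<}\kappa)$-flavoured component; the precise bookkeeping is standard Easton-support machinery and I would cite \cite{g-h} and the classical references for it.

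The heart of the argument is the main iteration $\PP$ of length $\lambda$ with Easton (or $\le\kappa$-)support whose purpose is: whenever $\QQ$ is a name for an $\aleph_1$-closed poset of size $\le\aleph_2$ that does \emph{not} collapse $\aleph_2$, then $\QQ$ has a canonical tree representation — a tree $T$ of height $\omega_1$ with levels of size $\le\aleph_2$ whose branches code $\QQ$-generic filters — and after forcing with such a $\QQ$, $T$ would acquire an uncountable branch, i.e. $T$ cannot be special. So at stage $\alpha$ of the iteration I would guess, via a Laver-style $\Diamond$-sequence on $\lambda$ (using that $\lambda$ is $2$-Mahlo to get stationarily many Mahlo $\mu$ where reflection holds), a name for such a $\QQ$ and its associated tree $T$, and force to specialize $T$ with the natural $\sigma$-closed? — no: specializing a tree of height $\omega_1$ is done by a forcing that is not $\sigma$-closed in general, so I would instead use the $\aleph_1$-closed specializing forcing appropriate to trees with no uncountable branch (the Baumgartner-style specialization, which when the tree has size $\le\aleph_2$ and no cofinal branch is $\aleph_1$-closed, or else one absorbs it into the collapse). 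The key point is that specializing $T$ in the extension \emph{prevents} $\QQ$ from later being $\aleph_2$-preserving — because a specialized tree provably has no uncountable branch in any outer model with the same $\omega_1$ — so after the full iteration no such $\QQ$ survives: every $\aleph_1$-closed $\QQ$ of size $\le\kappa$ must collapse $\kappa=\aleph_2$, and then by Theorem \ref{main theorem1} (with $\cf(2^{\aleph_0})=\aleph_1$ holding after the collapse inside the $\QQ$-extension, since $\QQ$ collapses $\aleph_2$ to $\aleph_1$) it is forcing equivalent to $\Add(\aleph_1,1)$.

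For the cardinal arithmetic (a) and (b): the bottom iteration is designed so that $2^{\aleph_0}=\kappa$ and $\kappa$ is preserved (it stays inaccessible, indeed one keeps enough of its supercompactness), while $\lambda$ is collapsed to become $\kappa^+=\aleph_3$ by the length-$\lambda$ iteration with the supercompactness of $\kappa$ used to run a reflection argument guaranteeing the iteration has the $\lambda$-c.c. (or is $\lambda$-Knaster on a club), so $\lambda$ is preserved as a cardinal and becomes $\aleph_3$; and $2^{\aleph_1}=\lambda$ by counting nice names, using that each iterand has size ${<}\lambda$. I would verify $\kappa$ becomes $\aleph_2$ by noting the bottom forcing collapses all cardinals in $(\aleph_1,\kappa)$ (a Levy collapse component) while the $\aleph_1$-closed iterands add no countable sequences and hence preserve $\aleph_1$; $\cf(2^{\aleph_0})=\cf(\kappa)=\kappa=\aleph_2$ in the final model, but inside the $\QQ$-extension where $\QQ$ has collapsed $\aleph_2$ to $\aleph_1$ we get $\cf(2^{\aleph_0})=\aleph_1$, which is exactly the hypothesis needed to invoke Theorem \ref{main theorem1}.

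The main obstacle, and where I expect to spend most of the effort, is the \emph{preservation} bookkeeping: I must ensure that the specialization forcing done at stage $\alpha$ for the tree $T_\alpha$ does not accidentally resurrect an uncountable branch in a tree $T_\beta$ specialized earlier, and more delicately that the whole iteration remains $\aleph_1$-preserving (no reals added beyond the bottom) and that the reflection/guessing catches \emph{every} potential $\QQ$ appearing in the final model — this is the standard "the iteration anticipates all future counterexamples" argument that requires the $2$-Mahlo hypothesis on $\lambda$ to get a suitable $\Diamond_\lambda$-like guessing together with a chain-condition argument (supercompactness of $\kappa$ giving an elementary embedding that reflects names of posets of size $\le\kappa$). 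Getting the support of the iteration right — Easton support at inaccessibles combined with the $\aleph_1$-closure of the specialization forcings — so that both the $\lambda$-c.c. and $\aleph_1$-closure of tails hold simultaneously is the technical crux.
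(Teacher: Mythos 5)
There is a genuine gap at the centre of your argument: the tree you propose to specialize does not exist. You posit that an $\aleph_1$-closed $\QQ$ of size $\le\aleph_2$ which preserves $\aleph_2$ has ``a canonical tree representation --- a tree $T$ of height $\omega_1$ with levels of size $\le\aleph_2$ whose branches code $\QQ$-generic filters,'' and that specializing such $T$ forbids new branches and hence kills $\QQ$. No such height-$\omega_1$ tree is available, and the specialize-to-forbid-branches mechanism (Baumgartner's, for $\omega_1$-trees) is not what is needed here. The correct object is of height $\aleph_2$: one first shows, quoting \cite{g-h}, that an $\aleph_1$-closed poset of size $\le 2^{\aleph_0}=\aleph_2$ which does not collapse $\aleph_2$ adds no fresh $\omega_1$-sequence and is therefore $\aleph_2$-distributive; by the base tree theorem of Balcar--Doucha--Hru\v{s}\'ak it then has a dense subset $T$ which is a tree of height $\aleph_2$ under the reversed order. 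It is (a name for) this $T$ --- or rather the dense branchless subtree $T^\star$ obtained by pruning its $\le\kappa$ many cofinal branches --- that the bookkeeping function must anticipate and specialize, with countable conditions into $\omega_1$. The point of that specialization is not to prevent branches but to ensure that the generic branch added by forcing with $T$ itself carries an injection of $\aleph_2$ into $\aleph_1$: so if $\QQ$ preserved $\aleph_2$ we could run this and derive a contradiction, whence every such $\QQ$ collapses $\aleph_2$. Without the distributivity-plus-base-tree step your proof has no object to specialize, and with the wrong tree height the specialization forcing and the branch argument both change character.

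Two further points. First, your plan to arrange $2^{\aleph_0}=\kappa$ ``at the bottom'' conflicts with the preservation argument: the paper adds the Cohen reals by a single $\Add(\aleph_0,\kappa)$ \emph{after} the length-$\lambda$ iteration, and the specialization forcings act on $\Add(\aleph_0,\kappa)$-\emph{names} for $\kappa$-trees; preservation of $\kappa$ is then obtained by factoring the iteration as a $\kappa$-directed closed part (absorbed by Laver indestructibility) followed by a part shown to be $\kappa$-c.c.\ via weak compactness. You would need to say how your bottom-first arrangement recovers this factorization. Second, the final deduction ``$\QQ$ collapses $\aleph_2$ to $\aleph_1$, so apply Theorem \ref{main theorem1} inside the $\QQ$-extension'' is not valid --- Theorem \ref{main theorem1} classifies forcings \emph{of} a model with $\cf(2^{\aleph_0})=\aleph_1$, not a forcing that produces one. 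The correct last step is the elementary fact (from \cite{g-h}) that a separative $\aleph_1$-closed poset of size $\le\mu$ which collapses $\mu$ to $\aleph_1$ is forcing equivalent to $\Col(\aleph_1,\mu)\simeq\Add(\aleph_1,1)$.
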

 Following \cite{cox-krueger2}, let \emph{Todorcevic's maximality principle} be the assertion: ``every partial order which adds a fresh subset of $\aleph_1,$ collapses $\aleph_1$ or $\aleph_2$'', where by a fresh subset of a cardinal $\kappa$ we mean a subset of $\kappa$ which is not in the ground model but all of its proper initial segments are in
the ground model.

 In \cite{todorcevic},  Todorcevic showed that if $2^{\aleph_0}=\aleph_2$ and every $\aleph_1$-tree of size $\aleph_1$ is special, then
 Todorcevic's maximality principle holds.

 By results of Baumgartner \cite{baumgartner} and Todorcevic \cite{todorcevic2}, ``$2^{\aleph_0}=\aleph_2+$ every $\aleph_1$-tree of size $\aleph_1$ is special'' is consistent, and hence  Todorcevic's maximality principle is consistent as well. On the other hand, Foreman-Magidor-Shelah \cite{foreman-magidor-shelah} showed that $\text{PFA}$ implies the same conclusion.
In \cite{viale}, Viale and Weiss introduced the principle $\text{GMP}$ (guessing model principle)  and showed that it follows from $\text{PFA}$.
 Cox and Krueger  \cite{cox-krueger1}, introduced the stronger principle $\text{IGMP}$ (indestructible guessing model principle) and showed that
$\text{PFA}$ implies $\text{IGMP}$ which in turn implies Todorcevic's maximality principle. On the other hand, in \cite{cox-krueger2}, they showed that Todorcevic's maximality principle does not follow
from $\text{GMP}.$

We   extends the above result of Todorcevic to higher cardinals, and prove the following theorem.
\begin{theorem}
 \label{main theorem3}
Assume $\kappa$ is a supercompact cardinal and $\lambda>\kappa$ is a $2$-Mahlo cardinal.
Then there is a generic extension of the universe in which the following hold:

 $(a)$ $2^{\aleph_0}=\aleph_1,$

 $(b)$ $\kappa=\aleph_2,$

 $(c)$ $2^{\aleph_1}=\lambda=\aleph_3,$

 $(d)$ Every partial order which adds a fresh subset of $\aleph_2,$ collapses $\aleph_2$ or $\aleph_3.$
\end{theorem}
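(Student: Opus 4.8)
The plan is to follow the template of Todorcevic's theorem quoted above (the $\omega_1$-case, \cite{todorcevic}): first isolate a specialization principle at $\aleph_2$ that implies clause $(d)$, then force a model of $(a)$--$(c)$ in which that principle holds. Call an $\aleph_2$-tree $T$ (height $\omega_2$, all levels of size $\le\aleph_1$) with no cofinal branch \emph{special} if there is $f\colon T\to\omega_1$ all of whose fibres are antichains; the principle is ``every $\aleph_2$-tree with no cofinal branch is special'', the exact analogue of ``every $\aleph_1$-tree of size $\aleph_1$ is special''. First I would prove the reduction: \emph{if $2^{\aleph_0}=\aleph_1$, $2^{\aleph_1}=\aleph_3$ and every $\aleph_2$-tree with no cofinal branch is special, then every partial order which adds a fresh subset of $\omega_2$ collapses $\omega_2$ or $\omega_3$.}

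For the reduction, assume $\mathbb{P}$ preserves $\omega_1,\omega_2,\omega_3$ and let $p\in\mathbb{P}$ force that $\dot A$ is a fresh subset of $\omega_2$. In $V$, form the tree of approximations of $\dot A$: its nodes are the $t\in{}^{\delta}2$, $\delta<\omega_2$, such that some $q\le p$ forces $t$ to be an initial segment of $\dot A$, ordered by end-extension. Under $\CH$ the levels below $\omega_1$ are countable and under $2^{\aleph_1}=\aleph_3$ the tree has size $\le\aleph_3$; the subtle point -- exactly as in Todorcevic's argument -- is that its higher levels may have size up to $\aleph_3$, so one must thin it, using a continuous $\in$-chain of elementary submodels of size $\aleph_1$ together with the freshness and the newness of $\dot A^{G}$, down to a genuine $\aleph_2$-tree $T^{*}$ which has no cofinal branch in $V$ but acquires the cofinal branch $\langle\dot A^{G}\cap\delta:\delta<\omega_2\rangle$ in $V[G]$. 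Since $T^{*}$ is special, witnessed by some $f\colon T^{*}\to\omega_1$, and $f$ still has antichain fibres in $V[G]$ where $\omega_1$ and $\omega_2$ are preserved, the cofinal branch of $T^{*}$ -- which has $\aleph_2$ nodes -- would be a union of $\aleph_1$ sets each of size $\le 1$; contradiction.

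Next I would build the model, paralleling the construction behind Theorem~\ref{main theorem2} but replacing Cohen reals by Cohen subsets of $\omega_1$ so as to keep $\CH$. First force Laver-style indestructibility of the supercompactness of $\kappa$ under $\aleph_1$-closed forcing. Then force with a Mitchell-style, $\aleph_1$-closed forcing $\mathbb{R}$ which (i) Levy-collapses the cardinals in $[\aleph_2,\kappa)$ so $\kappa$ becomes $\aleph_2$; (ii) over the next $\lambda$ stages collapses the cardinals in $(\kappa,\lambda)$ down to $\kappa$, so $\lambda$ becomes $\aleph_3$, while interleaving $\Add(\aleph_1,\lambda)$ so $2^{\aleph_1}$ becomes $\lambda$; and (iii) at each of these stages specializes, by an $\aleph_1$-closed, $\aleph_2$-c.c.\ forcing, the $\aleph_2$-tree handed to it by a bookkeeping function. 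Then: $\mathbb{R}$ is $\aleph_1$-closed, so adds no reals and $2^{\aleph_0}=\aleph_1$; $\mathbb{R}$ is $\lambda$-c.c., so $\lambda$ is preserved and, with $(ii)$, becomes $\aleph_3=2^{\aleph_1}$; $\omega_2=\kappa$ is preserved; and by lifting a supercompact embedding $j$ through $\mathbb{R}$ one checks that the bookkeeping catches every $\aleph_2$-tree of $V^{\mathbb{R}}$, so the reduction applies and clause $(d)$ holds in $V^{\mathbb{R}}$. The hypothesis that $\lambda$ is $2$-Mahlo is what powers $(ii)$--$(iii)$: stationarily many Mahlo cardinals $\mu<\lambda$ absorb the initial segments of $\mathbb{R}$, giving the reflection needed both for the $\lambda$-c.c.\ and for showing that the length-$\lambda$ iteration of $\aleph_1$-closed, $\aleph_2$-c.c.\ forcings does not collapse $\omega_2$.

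The main obstacle is the preservation of $\omega_2$ through part $(iii)$: an iteration of $\aleph_1$-closed, $\aleph_2$-c.c.\ forcings need not be $\aleph_2$-c.c., and the remedy is precisely the reflection supplied by the $2$-Mahloness of $\lambda$, which must be arranged compatibly with the collapses of $(i)$--$(ii)$ and with the supercompact lifting of $(iii)$ -- so that at the stationarily many Mahlo $\mu<\lambda$ the fragment of $\mathbb{R}$ up to $\mu$ is captured and still $\omega_2$-preserving. A second, genuinely Todorcevic-style difficulty is the thinning in the reduction: squeezing the possibly fat approximation tree of a fresh subset of $\omega_2$ down to a branchless $\aleph_2$-tree that still gains a branch over $V$, so that the specialization hypothesis can be invoked.
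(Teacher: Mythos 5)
Your overall template --- an iterated specialization model plus a Todorcevic-style reduction --- matches the paper's, and your sketch of the forcing (a length-$\lambda$ iteration of collapses, $\Add(\aleph_1,\cdot)$ and $\aleph_1$-closed specialization forcings, with the $2$-Mahloness of $\lambda$ and the indestructible supercompactness of $\kappa$ powering the chain condition and the preservation of $\kappa$) is essentially the paper's Section~6 construction. The genuine gap is in your reduction, specifically in the ``thinning'' step.

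The correct argument (Todorcevic's, reproduced in the paper) is a \emph{dichotomy} on the Boolean-valued approximation tree $T=\bigcup_{\alpha<\omega_2}T_\alpha$, where $T_\alpha$ consists of the nonzero meets $\bigwedge_{\beta<\alpha}a_{\beta,f(\beta)}$, $f\in{}^{\alpha}2$; this tree has height $\omega_2$ and size $2^{\aleph_1}=\aleph_3$. Either some condition $b$ satisfies $|\{a\in T_\alpha:a\wedge b\neq 0\}|\le\aleph_2$ for every $\alpha$ --- in which case $T\downarrow b$ is a tree of height and size $\aleph_2$ that the model has specialized, and the generic branch through it collapses $\aleph_2$ --- or below every $b$ some level has more than $\aleph_2$ compatible nodes, in which case the name for the function sending $\alpha$ to the index of the generic's node in $T_\alpha$ is forced to have range unbounded in $\aleph_3$, so $\cf(\aleph_3)$ drops and $\aleph_3$ is collapsed. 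Your reduction tries to land in the first horn always, by pruning the $\aleph_3$-sized tree down to an $\aleph_2$-tree with levels of size $\le\aleph_1$ and no cofinal branch. This cannot be done: the node $\dot A^{G}\cap\delta$ of the fresh set at level $\delta$ ranges over a level of size up to $\aleph_3$, and no $\aleph_2$-sized subfamily chosen in $V$ (e.g.\ via a chain of elementary submodels of size $\aleph_1$) can be guaranteed to contain it for all $\delta$; indeed, if such a pruning always succeeded, no cardinal-preserving forcing could ever add a fresh subset of $\omega_2$, which is false. Moreover, even on the first horn the tree to be specialized has levels of size up to $\aleph_2$ and may have up to $\aleph_2$-many cofinal branches in $V$ (ground-model sets each of whose initial segments is forced somewhere to agree with $\dot A$), so the principle your reduction actually needs is ``every tree of height and size $\aleph_2$ with at most $\aleph_2$-many cofinal branches is special'' in Baumgartner's sense --- precisely the class handled by the paper's Section~4 machinery with the $\star$-pruning of branches --- and not the special-$\aleph_2$-Aronszajn-tree property you state, which concerns a different class of trees. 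A minor slip as well: under $\CH$ the levels of the approximation tree below $\omega_1$ have size $\le\aleph_1$, not $\aleph_0$.
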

\begin{remark}
In theorems \ref{main theorem2} and \ref{main theorem3}, we can replace the cardinals $\aleph_0, \aleph_1$ and $\aleph_2$ by the cardinals $\eta, \eta^+$ and $\eta^{++}$ respectively, where $\eta$ is a regular cardinal less than $\kappa.$
\end{remark}
The above result is  related to Foreman's maximality principle \cite{foreman-magidor-shelah86}, which asserts that any non-trivial forcing notion either adds a new real
or collapses some cardinals.

The paper is organized as follows. In section 2 we prove Theorem \ref{main theorem1}. Sections 3 and 4 are devoted to some preliminary results
which are then used in section 5 for the proof of Theorem \ref{main theorem2}. In the last  section 6, we rove Theorem \ref{main theorem3}.

To avoid trivialities, by a forcing notion we always mean a non-trivial separative forcing notion. We  use $\simeq$ for the equivalence of forcing notions, so
\[
\MPB \simeq \MQB ~~\Leftrightarrow ~~ RO(\MPB) \text{~is isomorphic to~} RO(\MQB),
\]
where $RO(\MPB)$ denotes the Boolean completion of $\MPB.$ Also $\MPB \lessdot \MQB$ means that $\MPB$ is a regular sub-forcing of $\MQB.$
\section{A negative answer to Williams question when the continuum is singular}
In this section we prove Theorem \ref{main theorem1}.
In \cite{g-h} it is shown that if $\MQB$ is any $\aleph_1$-closed forcing notion \footnote{In fact being $\omega+1$-strategically closed is sufficient} of size $\leq 2^{\aleph_0}$ and if $\lambda$ is the least cardinal such that forcing with $\MQB$ adds a fresh $\lambda$-sequence of ordinals,
then forcing with $\MQB$ collapses $2^{\aleph_0}$ into $\lambda,$ and hence, if in addition $\lambda=\aleph_1,$ then $\MQB \simeq \Add(\aleph_1, 1).$
 Thus to prove Theorem 1.1, it  suffices to show
that if $\cf(2^{\aleph_0})=\aleph_1,$ then  any  $\aleph_1$-closed forcing notion $\MQB$ of size at most  $2^{\aleph_0}$ adds a fresh set of ordinals of size $\aleph_1.$
We give a  direct proof of this fact which is of its own interest, and avoids the use of the results of \cite{g-h}.

If $2^{\aleph_0}=\aleph_1,$ then the result is known to hold, so  assume that $\aleph_1 < 2^{\aleph_0}$ and $\cf(2^{\aleph_0})=\aleph_1.$ Let $\MQB$ be a non-trivial $\aleph_1$-closed forcing notion of size $\leq 2^{\aleph_0}$.
We are going to show that $\MQB$ is forcing equivalent to $\Add(\aleph_1, 1).$
\begin{notation}
For a forcing notion $\PP$ and a condition $p \in \PP,$ let $\PP \downarrow p$ denote the set of all conditions in $\PP$
which extend $p$; i.e., $\PP \downarrow p=\{ q \in \PP: q \leq_{\PP} p       \}$.
\end{notation}
Let $\langle \MQB_i: i < \omega_1       \rangle$ be a $\subseteq$-increasing and continuous
sequence of subsets of $\MQB$
such  that $\MQB_0=\emptyset$, for all $i<\omega_1, |\MQB_i| < 2^{\aleph_0}$, and $\MQB=\bigcup_{i<\omega_1}\MQB_i.$
\begin{lemma}
For every $i<\omega_1$ and every $p \in \MQB,$ there exists $q \leq_{\MQB} p$ such that there is no $r \in \MQB_i$ with
$r \leq q.$
\end{lemma}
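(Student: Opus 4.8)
The plan is to argue by contradiction. Suppose that for some $i < \omega_1$ and some $p \in \MQB$ there is \emph{no} $q \leq_{\MQB} p$ with the required property; that is, for every $q \leq_{\MQB} p$ there is $r \in \MQB_i$ with $r \leq_{\MQB} q$. I will build $2^{\aleph_0}$ pairwise incompatible conditions below $p$, push each of them down into $\MQB_i$, and thereby contradict $|\MQB_i| < 2^{\aleph_0}$.

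First I would construct a binary splitting tree $\langle q_s : s \in {}^{<\omega}2 \rangle$ of conditions below $p$. Set $q_{\langle\rangle} = p$. Since $\MQB$ is a non-trivial separative forcing, no condition is an atom of $RO(\MQB)$, so below any condition there are two incompatible conditions; thus given $q_s$ we may choose $q_{s \append 0}, q_{s \append 1} \leq_{\MQB} q_s$ with $q_{s \append 0} \perp q_{s \append 1}$. This recursion uses nothing but non-triviality. Next, for each branch $b \in {}^{\omega}2$ the sequence $\langle q_{b \restriction n} : n < \omega \rangle$ is $\leq_{\MQB}$-decreasing and of length $\omega$, so by the $\aleph_1$-closure of $\MQB$ I fix a lower bound $q_b \leq_{\MQB} p$.

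Now for $b \neq b'$ in ${}^{\omega}2$, letting $n$ be least with $b(n) \neq b'(n)$, the conditions $q_b$ and $q_{b'}$ extend the incompatible siblings $q_{(b \restriction n) \append b(n)}$ and $q_{(b \restriction n) \append b'(n)}$ respectively, hence $q_b \perp q_{b'}$. By the assumption on $p$, choose for each $b$ some $r_b \in \MQB_i$ with $r_b \leq_{\MQB} q_b$. Any common extension of $r_b$ and $r_{b'}$ would extend both $q_b$ and $q_{b'}$, so $r_b \perp r_{b'}$ whenever $b \neq b'$; in particular the $r_b$ are pairwise distinct, giving an injection ${}^{\omega}2 \hookrightarrow \MQB_i$ and so $|\MQB_i| \geq 2^{\aleph_0}$, contradicting $|\MQB_i| < 2^{\aleph_0}$.

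The argument is essentially routine, and I do not expect a serious obstacle. The only two points to watch are the use of non-triviality to split (which is precisely the absence of atoms in a non-trivial separative forcing) and the verification that $\aleph_1$-closure is enough --- which it is, since every lower bound invoked is that of a strictly decreasing $\omega$-sequence. If one weakens the hypothesis to ``$\omega+1$-strategically closed'' as in the footnote, the same tree can still be produced by letting the closure player answer along each branch; this needs slightly more bookkeeping but no new idea. Finally, observe that the hypothesis $\cf(2^{\aleph_0}) = \aleph_1$ plays no role in this lemma --- it is used only to obtain the filtration $\langle \MQB_i : i < \omega_1 \rangle$ with all pieces of size $< 2^{\aleph_0}$.
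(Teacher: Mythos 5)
Your proof is correct and is in substance the same as the paper's: the paper produces a maximal antichain of size $2^{\aleph_0}$ below $p$ (obtained by exactly the binary splitting tree plus $\aleph_1$-closure argument you spell out) and then picks $q$ in it with no member of $\MQB_i$ below it by the same counting against $|\MQB_i| < 2^{\aleph_0}$. The only difference is presentational --- you argue by contradiction where the paper argues directly --- so there is nothing to fix.
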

\begin{proof}
Let $A$ be a maximal antichain in $\MQB $ below $p$ of size $2^{\aleph_0}$, which exists as $\MQB$ is non-trivial and $\aleph_1$-closed.
As $|\MQB_i|< 2^{\aleph_0},$ we can find $q \in A$ such that $(\MQB \downarrow q) \cap \MQB_i =\emptyset.$ Then $q$ is  as required.
\end{proof}
We now define by induction on $i<\omega_1$ a sequence $\bar{p}_i$ such that:
\begin{enumerate}
\item [(1)] $\bar{p}_i=\langle \bar{p}_i(\eta): \eta \in (^{i+1}$$(2^{\aleph_0}) )     \rangle$ is a maximal antichain in $\MQB,$

\item [(2)] If $j<i$ and $\eta \in (^{i+1}(2^{\aleph_0})),$ then $\bar{p}_i(\eta) \leq_{\MQB} \bar{p}_j(\eta \upharpoonright (j+1)),$

\item [(3)] If $\eta \in (^{i+1}(2^{\aleph_0}))$, then there is no member of $\MQB_i$ which is below $\bar{p}_i(\eta).$
\end{enumerate}
\begin{enumerate}
\item [$\underline{i=0}$]: Let $\bar{p}_0=\langle \bar{p}_0(\eta): \eta \in (^1(2^{\aleph_0}))      \rangle$  be any maximal antichain in $\MQB.$ Note that clauses $(2)$ and $(3)$ above are vacuous as $\MQB_0$ is empty.

\item [$\underline{i>0}$]:  For every $\nu \in (^{i}$$(2^{\aleph_0}))$ set $\bar{p}^1_{i,\nu}=\langle \bar{p}_j(\nu \upharpoonright (j+1)): j<i    \rangle$.
Then, by the induction hypothesis, $\bar{p}^1_{i,\nu}$ is a countable decreasing sequence of conditions in $\MQB$, and so the set
\[
\PP^2_{i,\nu}=\{ q \in \MQB: j<i \Rightarrow q \leq    \bar{p}_j(\nu \upharpoonright (j+1))   \}
\]
 is non-empty. Let
\[
\PP^3_{i,\nu}=\{ q \in \PP^2_{i,\nu}: \forall z \in \MQB_i [ z \nleq_{\MQB} q \text{~ and moreover ~} z \nVdash \text{``}q \in \dot{G}_{\MQB}\text{''}]         \}.
\]
$\PP^3_{i,\nu}$ is easily seen to be a dense subset of $\PP^2_{i,\nu},$ hence we can find a maximal antichain, say  $\bar{\PP}_{i, \nu}=\{ p_{i,\nu}(\alpha): \alpha < 2^{\aleph_0}  \}$, in it. For $\eta \in (^{i+1}$$(2^{\aleph_0}))$ set $\bar{p}_i(\eta)=p_{i, \eta \upharpoonright i}(\eta(i)).$ Then it is easily seen that  $\bar{p}_i=\langle \bar{p}_i(\eta): \eta \in (^{i+1}$$(2^{\aleph_0}))      \rangle$  is as required.
\end{enumerate}
Let $\lusim{v} \in V^{\MQB}$ be the $\MQB$-name
\[
\lusim{v} = \{ \langle(\check{j}, \check{\eta}(\check{j})),  \bar p_i(\eta) \rangle: j \leq i<\omega_1 \text{~and~}   \eta \in (^{i+1}(2^{\aleph_0}))          \}.
\]
\begin{claim}
$($a$)$ For every $i<\omega_1$ and $\eta \in (^{i+1}$$(2^{\aleph_0})),~ \bar p_i(\eta) \Vdash$``$\eta =\lusim{v} \upharpoonright i+1$''.

$($b$)$ $1_{\MQB}\Vdash$``$\lusim{v} \in (^{\omega_1}$$(2^{\aleph_0}))$''.
\end{claim}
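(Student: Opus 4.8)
The plan is to verify the two statements directly from the definition of the name $\lusim{v}$ together with properties (1)--(3) of the sequence $\bordered{\bar p_i : i<\omega_1}$. For part (a), fix $i<\omega_1$ and $\eta \in {}^{i+1}(2^{\aleph_0})$; I want to show $\bar p_i(\eta) \Vdash$ ``$\eta = \lusim{v}\restricted (i+1)$''. Working below $\bar p_i(\eta)$, I first observe that for each $j \le i$ the pair $\opair{(\check j,\check\eta(\check j))}{\bar p_i(\eta)}$ literally appears in $\lusim v$, so $\bar p_i(\eta)$ forces $(j,\eta(j)) \in \lusim v$, i.e., $\eta(j) = \lusim v(j)$ for all $j\le i$; hence $\bar p_i(\eta) \Vdash \eta \subseteq \lusim v$. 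For the reverse inclusion I must rule out that any condition $q \le \bar p_i(\eta)$ forces some extra pair $(j, \delta) \in \lusim v$ with $j \le i$ and $\delta \ne \eta(j)$. Such a pair can only enter $\lusim v$ via some $\opair{(\check j,\check\nu(\check j))}{\bar p_{i'}(\nu)}$ with $j \le i' < \omega_1$, $\nu \in {}^{i'+1}(2^{\aleph_0})$, and $\nu(j)=\delta$; so I need $\bar p_{i'}(\nu)$ to be compatible with $q$ (and hence with $\bar p_i(\eta)$) while $\nu(j) \ne \eta(j)$. I split into the cases $i' \ge i$ and $i' < i$. If $i' \ge i$, then by clause (2), $\bar p_{i'}(\nu) \le_{\MQB} \bar p_i(\nu \restricted (i+1))$; since $\bordered{\bar p_i(\eta'):\eta' \in {}^{i+1}(2^{\aleph_0})}$ is an antichain (clause (1)), compatibility with $\bar p_i(\eta)$ forces $\nu\restricted(i+1) = \eta$, in particular $\nu(j)=\eta(j)$. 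If $i' < i$, then again by clause (2) $\bar p_i(\eta) \le_{\MQB} \bar p_{i'}(\eta \restricted (i'+1))$, and by clause (1) applied at level $i'$, compatibility of $\bar p_{i'}(\nu)$ with $\bar p_i(\eta) \le \bar p_{i'}(\eta\restricted(i'+1))$ forces $\nu = \eta\restricted(i'+1)$, so again $\nu(j)=\eta(j)$. Thus no spurious value can be forced in, and $\bar p_i(\eta) \Vdash \lusim v \restricted (i+1) = \eta$, which is (a).

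For part (b), I must show $1_{\MQB} \Vdash$ ``$\lusim v \in {}^{\omega_1}(2^{\aleph_0})$''. It suffices to show that for every $i<\omega_1$, the set $D_i = \{\bar p_i(\eta) : \eta \in {}^{i+1}(2^{\aleph_0})\}$ is predense (indeed it is a maximal antichain by clause (1)), since then in any generic $G$ there is, for each $i$, a unique $\eta_i \in {}^{i+1}(2^{\aleph_0})$ with $\bar p_i(\eta_i) \in G$, and by part (a) we get $\lusim v[G] \restricted (i+1) = \eta_i$; the coherence clause (2) of the construction guarantees $\eta_{i'} = \eta_i \restricted (i'+1)$ for $i' < i$ (using again that $D_{i'}$ is an antichain, so the two conditions $\bar p_{i'}(\eta_{i'})$ and $\bar p_{i'}(\eta_i \restricted (i'+1))$, both in $G$, must coincide). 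Hence $\bigcup_{i<\omega_1}\eta_i$ is a well-defined function with domain $\omega_1$ and values in $2^{\aleph_0}$, and it equals $\lusim v[G]$ by (a). Since this holds in every generic extension, $1_{\MQB}$ forces the statement. This step is essentially bookkeeping once (a) is in hand.

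The only genuinely delicate point is the reverse-inclusion argument in part (a): showing that $\bar p_i(\eta)$ does not merely force $\eta \subseteq \lusim v$ but actually pins down $\lusim v \restricted(i+1)$ exactly. This is where clauses (1) and (2) are used in tandem, and one has to be careful that the name $\lusim v$ — being a "flat" name built from antichains at every level — cannot accidentally put two incompatible values of $\lusim v(j)$ below a common condition. I would emphasize that (3) plays no role in this particular Claim; it will be needed later to argue that $\lusim v[G]$ is a \emph{fresh} sequence (its proper initial segments lie in $V$, while $\lusim v[G]$ itself does not), which is the ultimate goal of the construction.
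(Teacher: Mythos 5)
Your proof is correct and follows essentially the same route as the paper: part (b) is verbatim the paper's argument (unique $\eta_i$ from the maximal antichains, coherence of the $\eta_i$ via clause (2)), and part (a), which the paper dismisses as ``clear from the definition,'' is exactly the antichain-plus-coherence case analysis you spell out. Your closing remark that clause (3) is not needed here but only later (to show the derived sequence is not in $V$) is also accurate.
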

\begin{proof}
$($a$)$ is clear from the definition of $\lusim v.$

$($b$)$ Let  $G$ be $\MQB$-generic over $V$. Then for each $i<\omega_1$ we can find a unique $\eta_i \in (^{i+1}(2^{\aleph_0}))$
such that $\bar p_i(\eta_i) \in G.$ If $j<i,$ then $\bar{p}_i(\eta_i) \leq_{\MQB} \bar{p}_j(\eta_i \upharpoonright (j+1)),$
and so $\eta_j = \eta_{i} \upharpoonright j+1.$
It then immediately follows that
\[
\lusim v[G] = \{(i, \eta_i(j)): j \leq i < \omega_1             \} = \bigcup_{i<\omega_1} \{  \eta \in (^{i+1}(2^{\aleph_0})): \bar p_i(\eta) \in G     \}
\]
is a function from $ \omega_1$ into $2^{\aleph_0}$.
\end{proof}
We now define a $\mathbb{Q}$-name $\lusim{\tau}$ for a function from $\omega_1$ into $2$  as follows: let $\langle \rho_\alpha: \alpha < 2^{\aleph_0}  \rangle$
be an enumeration of $^{\omega}2$ with no repetitions. Then let $\lusim \tau$ be such that
\[
\Vdash_{\MQB} \lusim{\tau}(\omega\cdot i+n)=\rho_{\lusim{v}(i)}(n).
\]
\begin{lemma}
$\Vdash_{\MQB}$``$\lusim{\tau} \in (^{\omega_1}2)$ and $\lusim{\tau} \notin \check{V}$''.
\end{lemma}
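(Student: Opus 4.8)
The plan is to show two things: first, that $\lusim{\tau}$ is forced to be a function from $\omega_1$ to $2$, and second, that it is forced not to lie in the ground model; the latter is equivalent to showing that every proper initial segment of $\lusim{\tau}$ is forced to be in $V$ \emph{and} the whole sequence is forced to be new — that is, $\lusim{\tau}$ is a fresh $\omega_1$-sequence — since then by the cited result from \cite{g-h} (or by the direct argument the section is building toward) we are done. So really the content is: (i) $\Vdash_{\MQB} \lusim{\tau} \in {}^{\omega_1}2$; (ii) for each $i < \omega_1$, $\Vdash_{\MQB} \lusim{\tau}\restriction(\omega\cdot i) \in \check V$; (iii) $\Vdash_{\MQB} \lusim{\tau} \notin \check V$.

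For (i): by the previous Claim, $1_{\MQB} \Vdash \lusim{v} \in {}^{\omega_1}(2^{\aleph_0})$, and the defining equation $\lusim{\tau}(\omega\cdot i + n) = \rho_{\lusim{v}(i)}(n)$ produces a well-defined function on $\omega_1 = \{\omega\cdot i + n : i < \omega_1, n < \omega\}$ with values in $2$, since each $\rho_\alpha \in {}^\omega 2$. For (ii): fix $i < \omega_1$ and work below a condition deciding $\lusim{v}\restriction i$, say $\bar p_{j}(\eta)$ for suitable $j$; for each $k < i$ this forces a specific value $\alpha_k = \eta(k)$ for $\lusim{v}(k)$, hence forces $\lusim{\tau}\restriction(\omega\cdot i)$ to equal the concrete ground-model sequence $\langle \rho_{\alpha_k}(n) : k < i, n < \omega\rangle$; since such conditions (running over $\eta \in {}^{j+1}(2^{\aleph_0})$ for $j$ with the relevant level) are predense below $1_{\MQB}$, and in any generic $G$ the sequence $\langle \eta_k : k < i\rangle$ from the Claim is determined, $\lusim{\tau}\restriction(\omega\cdot i)$ is computed inside $V$ from this finite-length-in-$V$ data. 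For (iii): this is the crux. Suppose toward a contradiction that some $q \in \MQB$ forces $\lusim{\tau} = \check f$ for some $f \in {}^{\omega_1}2 \cap V$; then $q$ forces $\lusim{v} = \check g$ where $g(i)$ is the unique $\alpha$ with $\rho_\alpha = f\restriction[\omega\cdot i, \omega\cdot i + \omega)$ (well-defined since the $\rho_\alpha$ enumerate ${}^\omega 2$ without repetition). So it suffices to show no condition forces $\lusim{v}$ to be a ground-model function. But if $q \Vdash \lusim{v} = \check g$, pick $i$ with $q \in \MQB_i$ (possible since $\MQB = \bigcup_{i<\omega_1}\MQB_i$ and $\langle\MQB_i\rangle$ is increasing continuous), let $\eta = g\restriction(i+1)$, and consider $\bar p_i(\eta)$: by clause (3) of the construction, no member of $\MQB_i$ — in particular not $q$ — lies below $\bar p_i(\eta)$, and moreover (from the strengthened clause in the definition of $\PP^3_{i,\nu}$) $q \nVdash ``\bar p_i(\eta) \in \dot G_{\MQB}"$, i.e. some $q' \leq q$ forces $\bar p_i(\eta) \notin \dot G$. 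Then $q'$ forces $\bar p_{i'}(\eta')\notin \dot G$ for every $\eta' \supseteq \eta$ of length $i'+1 > i+1$ as well (since $\bar p_{i'}(\eta') \leq \bar p_i(\eta)$), so $q'$ forces that for each such $i'$ the unique $\eta_{i'} \in G$ with $\bar p_{i'}(\eta_{i'}) \in G$ satisfies $\eta_{i'} \neq g\restriction(i'+1)$, contradicting $q' \leq q \Vdash \lusim v = \check g$.

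The main obstacle is step (iii), and within it the precise bookkeeping in the last contradiction: one must use \emph{both} halves of clause (3)/the definition of $\PP^3_{i,\nu}$ — that $\bar p_i(\eta)$ has no extension in $\MQB_i$ and that no condition in $\MQB_i$ forces $\bar p_i(\eta)$ into the generic — because $q$ deciding $\lusim v = \check g$ need not literally be comparable with $\bar p_i(\eta)$; the ``moreover $z \nVdash q \in \dot G_{\MQB}$'' clause is exactly what lets us pass to $q' \leq q$ forcing $\bar p_i(\eta) \notin \dot G$ and thereby derive that $\lusim v \restriction(i+1) \neq \eta = g\restriction(i+1)$ in the extension. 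One should also double-check the harmless indexing mismatch between $\lusim{v}$ being indexed by ordinals $<\omega_1$ and $\lusim\tau$ by ordinals of the form $\omega\cdot i + n$; this is cosmetic but worth stating cleanly. Finally, I would close by remarking that $\lusim{\tau}$ is fresh, so the forcing $\MQB$ adds a fresh $\aleph_1$-sequence of $0$'s and $1$'s, which by the discussion at the start of the section (using \cite{g-h}, or the direct argument) gives $\MQB \simeq \Add(\aleph_1,1)$.
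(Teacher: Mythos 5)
Your proof is correct and rests on exactly the same mechanism as the paper's: the ``moreover $z \nVdash q\in\dot G_{\MQB}$'' clause in the definition of $\PP^3_{i,\nu}$, applied with $z=q\in\MQB_i$, to produce an extension of $q$ incompatible with the antichain member $\bar p_i(\eta)$ that $q$'s prediction of $\lusim{v}$ would require, together with the injectivity of the enumeration $\langle\rho_\alpha\rangle$. The only difference is packaging (you argue by contradiction through $\lusim{v}$, while the paper runs a density argument producing two conditions compatible with $q$ that force contradictory values of $\lusim{\tau}$ on an interval $[\omega\cdot j,\omega\cdot j+\omega)$), and your extra clause (ii) about initial segments, while true by $\aleph_1$-closure, is not needed for the statement.
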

\begin{proof}
Let $q_1 \in \MQB.$ Then for some $i<\omega_1, q_1 \in \MQB_i.$ Since
 $\langle \bar p_i(\eta): \eta \in (^{i+1}$$(2^{\aleph_0}))     \rangle$ is a maximal antichain,  we can find $\eta \in (^{i+1}$$(2^{\aleph_0}))  $
such that $q_1$ is compatible with $ \bar p_i(\eta).$ But $q_1\nVdash$``$ \bar p_i(\eta) \in \dot{G}_{\MQB}$'',
so there is $q_2 \leq q_1$ such that $q_2$ is incompatible with $ \bar p_i(\eta).$ But again as $\langle  \bar p_i(\eta): \eta \in (^{i+1}$$(2^{\aleph_0}))      \rangle$ is a maximal antichain, there exists $\rho \in (^{i+1}$$(2^{\aleph_0}))$ such that $q_2$ and $ \bar p_i(\rho)$ are compatible. Let $q_3 \leq q_2,  \bar p_i(\rho),$
and let $j\leq i$ be maximal such that $\eta\upharpoonright j =\rho \upharpoonright j$ and $\eta\upharpoonright (j+1) \neq\rho \upharpoonright (j+1).$
Then $q_3,  \bar p_i(\eta)$ are compatible with $q_1$, but they force contradictory information about $\lusim{\tau} \upharpoonright [\omega\cdot j, \omega \cdot j+\omega).$ The result follows immediately.
\end{proof}
\begin{lemma}
\label{existence of suitable dense subset}
There exists a dense subset $\MQB'$ of $\MQB$ which is the union of $\aleph_1$-many maximal antichains $\langle I^*_i: i<\omega_1   \rangle$ of $\MQB.$
\end{lemma}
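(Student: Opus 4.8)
The plan is to realize $\MQB'$ as a refining tower of maximal antichains. Let me abbreviate as the \emph{Avoidance Lemma} the first Lemma above, to the effect that for every $i<\omega_1$ and $p\in\MQB$ there is $q\leq_{\MQB} p$ with no member of $\MQB_i$ below $q$; recall also that, as noted in its proof, below any condition $\MQB$ has a maximal antichain of size $2^{\aleph_0}$. I would construct by recursion on $i<\omega_1$ a sequence $\langle I^*_i : i<\omega_1\rangle$ of maximal antichains of $\MQB$ forming a \emph{refining} tower, i.e. each member of $I^*_{i+1}$ extends a (necessarily unique) member of $I^*_i$ and, for limit $i$, each member of $I^*_i$ extends a member of every $I^*_j$ with $j<i$; then $\MQB'=\bigcup_{i<\omega_1}I^*_i$ is a union of $\aleph_1$ maximal antichains of $\MQB$. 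Along the way I would maintain: $(a)$ no member of $\MQB_i$ lies below a member of $I^*_i$ — available from the Avoidance Lemma, since the conditions with no $\MQB_i$-member below them are dense; and $(b)$ every member of $\MQB_i$ has an extension lying in $I^*_{i+1}$. Clause $(b)$ gives density of $\MQB'$ immediately: given $q\in\MQB$, pick $i$ with $q\in\MQB_i$; as $I^*_i$ is a maximal antichain $q$ is compatible with some $r\in I^*_i$, and by $(b)$ there is $r'\in I^*_{i+1}\subseteq\MQB'$ with $r'\leq_{\MQB} q$.

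The base and limit steps are the easy ones. At a limit $i$, each branch through $\langle I^*_j : j<i\rangle$ is a decreasing sequence whose length $i$ is a countable ordinal, hence has a lower bound by $\aleph_1$-closure; since the lower bounds of such a branch form a downward closed set, the Avoidance Lemma thins them to conditions avoiding $\MQB_i$, and taking for each branch a maximal antichain among these lower bounds and unioning over all branches produces a maximal antichain $I^*_i$ refining every earlier $I^*_j$ and satisfying $(a)$ (maximality follows by a fusion argument, antichain-ness because distinct branches split at some $I^*_j$, which is an antichain). The successor step carries the real weight: below each fixed $r\in I^*_i$ one must choose a maximal antichain all of whose members avoid $\MQB_{i+1}$ and which, for every one of the fewer than $2^{\aleph_0}$ members $q$ of $\MQB_i$ compatible with $r$, contains a condition below $q$. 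I would do this by an inner recursion through those $q$'s: whenever the current $q$ has no extension in the part of the antichain built so far, take a common lower bound of $r$ and $q$, extend it (via the Avoidance Lemma) to a condition avoiding $\MQB_{i+1}$ and incompatible with everything placed so far, and adjoin it; finally fill out to a maximal antichain below $r$ consisting of conditions avoiding $\MQB_{i+1}$.

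The main obstacle is exactly that inner recursion: showing the required extension always exists, i.e. that the recursion never gets stuck. One wants to exploit the asymmetry that, while $|\MQB_i|<2^{\aleph_0}$, the forcing $\MQB$ has maximal antichains of size $2^{\aleph_0}$ below every condition — this is where $\cf(2^{\aleph_0})=\aleph_1$ must enter — but some care is genuinely needed, since a set of fewer than $2^{\aleph_0}$ conditions can still be predense below a given condition; one may have to choose the common lower bounds of $r$ and $q$ more cleverly, or process $\MQB_i$ in a carefully chosen order, to guarantee the extension can be found. The remaining points — that the successor bookkeeping is consistent with the refining tower and with clause $(a)$, and that a $q$ which is incidentally captured by a condition adjoined for some other $q'$ causes no difficulty — are routine. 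If maintaining $(b)$ in this sharp form proves infeasible, the fallback is to demand only that each member of $\MQB$ eventually acquire an extension in \emph{some} $I^*_k$, distributing the captures over the tower by a pairing function on $\omega_1$; density still follows.
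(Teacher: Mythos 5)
The construction stalls exactly where you say it does, and the difficulty you flag is not a technical nuisance but the entire content of the lemma; as written, the proposal does not contain a proof. At the successor step your inner recursion requires, given the partial antichain $A$ assembled so far (of size $<2^{\aleph_0}$) and the next $q\in\MQB_i$ with no member of $A$ below it, a common extension of $r$ and $q$ incompatible with every member of $A$. Such an extension need not exist: $A$ can become predense below every common lower bound of $r$ and $q$ after as few as two steps. For instance, in $\Add(\aleph_1,1)$, if the recursion has placed $s_0=\{(0,0)\}$ and $s_1=\{(0,1)\}$ (already a maximal antichain below $r=\emptyset$) and the next $q$ is $\{(1,0)\}$, then no member of $\{s_0,s_1\}$ lies below $q$, yet every extension of $q$ is compatible with one of them. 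Neither ``choosing the lower bounds more cleverly'' nor reordering $\MQB_i$ is shown to repair this, and the hypothesis $\cf(2^{\aleph_0})=\aleph_1$ enters nowhere in your argument; the avoidance clause $(a)$, on the other hand, is baggage that this lemma does not need.

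The paper closes precisely this gap with a separating device built from the fresh name $\lusim{\tau}$ of the preceding lemma, which your proposal never uses. Below each $p$ one builds a binary tree of extensions $q_{p,\eta}$, $\eta\in{}^{<\omega}2$, deciding $\lusim{\tau}$ at the successive least undecided ordinals, and then extracts a perfect subtree all of whose branches have the same supremum $\delta_p$ of decided ordinals; its $2^{\aleph_0}$ branches give $2^{\aleph_0}$ extensions of $p$ forcing pairwise distinct values of $\lusim{\tau}\upharpoonright\delta_p$. Conditions are then grouped by the invariant $\delta_p$ (not by the filtration $\langle\MQB_i\rangle$, and with no refining tower), and the greedy recursion is run inside each group $I^1_\delta=\{p:\delta_p=\delta\}$: at each stage fewer than $2^{\aleph_0}$ values of $\lusim{\tau}\upharpoonright\delta$ have been committed, so some branch of the perfect tree below the current $p$ forces an unused value, and distinctness of the forced values yields incompatibility for free, so the recursion never gets stuck. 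This labelling of continuum-many pairwise incompatible extensions, coherent across different $p$'s in the same group, is the missing idea; without it the simultaneous choice of an antichain $\{s_q\le q\}$ indexed by a set of up to continuum many conditions cannot be carried out greedily.
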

\begin{proof}
For any $p\in \MQB,$ by the previous lemma, $p$ does not force any value for $\lusim{\tau}$, hence there are  ordinal $i<\omega_1$ and conditions $p_0, p_1 \leq p$
such that $p_l\Vdash$``$\lusim{\tau}(i)=l$'', $l=0,1$. Hence we can define by recursion a sequence
\begin{center}
$\langle  \langle q_{p, \eta}, i_{p, \eta}, \sigma_{p, \eta} \rangle:  \eta \in~(^{<\omega}2)        \rangle$
\end{center}
such that:
\begin{enumerate}
\item [(4)] $q_{p, \langle \rangle} =p,$
\item [(5)] $\nu \lhd \eta \Rightarrow q_{p, \eta} \leq q_{p, \nu},$
\item [(6)] $i_{p, \eta}$ is the least ordinal $i$ less than $\omega_1$ such that $q_{p, \eta}$ does not decide $\lusim{\tau}(i),$
\item [(7)] $q_{p, \eta}\Vdash$``$\forall j< i_{p, \eta}, \lusim{\tau}(i_{p, \eta \upharpoonright j})=\sigma_{p, \eta}(j)$''.
\end{enumerate}
It is evident that if $\nu \lhd \eta,$ then $i_{p, \nu} < i_{p, \eta}.$
\begin{claim}
For any $p \in \MQB,$ there exists a perfect subtree $T_p$ of $^{\omega}2$ such that for some limit ordinal $\delta_p$
and every $\rho\in \Lim(T_p), \bigcup_n i_{p, \rho\upharpoonright n} =\delta_p$,
where $\Lim(T_p)$ is the set of all branches through $T_p$.
\end{claim}
\begin{proof}
For any $\eta \in (^{<\omega}2)$
set
\begin{center}
$\delta_{p, \eta}= \sup\{ i_{p, \nu}: \eta \lhd \nu \in(^{<\omega}2)      \}$.
\end{center}
For some $\eta_*,$ the ordinal $\delta_{p, \eta_*}$ is minimal.
$\delta_{p, \eta_*}$ is a limit ordinal of cofinality $\aleph_0,$ so let $\langle \eta_{p,m}: m<\omega \rangle$ be an increasing sequence cofinal in $\delta_{p, \eta_*}$ such that $\eta_{p,0}=\len(\eta_*).$ We define $h_m:$$~^{m}2 \rightarrow$$~^{\eta_{p,m}}2,$ by induction on $m<\omega,$ such that:
\begin{enumerate}
\item $h_m$ is $1$-$1$,

\item $h_0(\langle \rangle)=\eta_*,$

\item If $n<m$ and $\eta \in (^{m}2)$, then $h_n(\eta \upharpoonright n) \lhd h_m(\eta),$

\item If $\eta \in(^{m}2)$, then $i_{p, h_m(\eta)}> \eta_{p, m}.$

\end{enumerate}

Then $T_p=\{ h_m(\eta): m< \omega$ and $\eta \in(^{m}2)   \}$ and $\delta_p=\delta_{p, \eta_*}$ are as required.
\end{proof}
For each limit ordinal $\delta<\omega_1$ set
\[
I^1_\delta=\{ p\in \MQB: \delta_p=\delta \}.
\]
Then clearly $\MQB=\bigcup\{ I^1_\delta: \delta$ is a limit ordinal less than $\omega_1\}.$
\begin{claim}
Let $\delta$ be a  countable limit ordinal. Then there exists an antichain $\bar{q}^\delta=\langle q^\delta_{p}: p \in I^1_\delta          \rangle$
such that for each $p \in I^1_\delta, ~ q^\delta_{p} \leq p$.
\end{claim}
\begin{proof}
Let $\langle  p_\alpha: \alpha < \alpha_\delta \leq 2^{\aleph_0} \rangle$ enumerate $I^1_\delta.$ We choose, by induction on $\alpha,$ a pair
$\langle   r_\alpha, v_\alpha    \rangle$
such that
\begin{enumerate}
\item [(8)] $r_\alpha \leq p_\alpha,$ and $v_\alpha \in(^{\delta}2),$
\item [(9)] $r_\alpha\Vdash$``$\lusim{\tau} \upharpoonright \delta_{p_\alpha} = v_\alpha$'',
\item [(10)] $\alpha \neq \beta \Rightarrow v_\alpha \neq v_\beta.$
\end{enumerate}
Suppose $\alpha<\alpha_\delta$ and we have defined $\langle   r_\beta, v_\beta    \rangle$
for all $\beta< \alpha$ as above. We define $\langle   r_\alpha, v_\alpha    \rangle$.

For every $\rho \in \Lim(T_{p_\alpha}),$ the sequence $\langle  q_{p, \rho\upharpoonright n}: n<\omega      \rangle$
is a decreasing chain of conditions in $\MQB,$ and hence there is a condition $q^*_{\rho, \alpha}$ which extends all of them. We may further suppose that it forces a value  $v_{\rho, \alpha}$ for $\lusim{\tau} \upharpoonright \delta,$ where $\delta=\delta_{p_\alpha}.$ Also note that by the choice of $\langle  q_{\rho, \alpha}: \rho \in(^{<\omega}2)       \rangle$, for $\rho_1 \neq \rho_2$ in $\Lim(T_{p_\alpha}),$ we have
$v_{\rho_1, \alpha} \neq v_{\rho_2, \alpha}.$
Now $\{ v_\beta: \beta< \alpha  \} \subseteq $$~^{\delta}2$, hence for some  $\rho=\rho_\alpha \in \Lim(p_{\alpha})$
we have that $v_{\rho, \alpha} \notin \{ v_\beta: \beta< \alpha  \}.$ Let $r_\alpha=q_{\rho_\alpha, \alpha}$ and $v_\alpha=v_{\rho_\alpha, \alpha}.$
\end{proof}
Now for each limit ordinal $\delta<\omega_1$ let $I_\delta$ be a maximal antichain of $\MQB,$ such that $I_\delta \supseteq \{ q_{p, \delta}: p \in I^1_\delta   \},$ and let $\MQB'=\bigcup\{ I_\delta: \delta$ is a countable limit ordinal   $\}.$ Then
$\MQB'$ is as required which completes the proof of Lemma \ref{existence of suitable dense subset}.
\end{proof}
As each $I^*_i$ is a maximal antichain in $\MQB'$ and hence also in $\MQB,$
it can easily seen that there are $\bar{p}^*_i, i<\omega_1,$ such that
\begin{enumerate}
\item [(11)] $\bar{p}^*_i= \langle  p^*_\eta: \eta \in(^{i+1}(2^{\aleph_0}) )         \rangle$ is a maximal antichin of
$\MQB'$ (and hence of $\MQB$),

\item [(12)] If $j<i$ and $\eta \in(^{i+1}(2^{\aleph_0}))$, then  $p^*_\eta \leq p^*_{\eta \upharpoonright (j+1)},$

\item [(13)] If $i=j+1$ and $\eta \in(^{i+1}(2^{\aleph_0})),$ then $p^*_\eta$ is stronger  than some condition in $I^*_i.$
\end{enumerate}
Let
\[
\MQB''=\{ p^*_\eta: \exists i<\omega_1, \eta \in (^{i+1}(2^{\aleph_0}))         \}.
\]
\begin{lemma}
$\MQB''$ is a dense subset of $\MQB.$
\end{lemma}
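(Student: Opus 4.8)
The plan is to show that $\MQB''$ is dense by proving that every condition $p \in \MQB$ has an extension lying in $\MQB''$. The natural target is to descend along a suitable branch of the tree structure built in $(11)$–$(13)$ and find a single $p^*_\eta$ below $p$. Since the $\bar p^*_i$ form a coherent system of maximal antichains in $\MQB$ (coherent in the sense of clause $(12)$), the standard approach is: given $p$, build by recursion on $i < \omega_1$ a strictly decreasing sequence of conditions $q_i \le_{\MQB} p$ together with nodes $\eta_i \in {}^{i+1}(2^{\aleph_0})$ such that $\eta_j = \eta_i \restriction (j+1)$ for $j < i$, where at stage $i$ we use maximality of the antichain $\bar p^*_i$ to pick $\eta_i$ with $\bar p^*_i(\eta_i)$ compatible with the condition obtained so far, then refine. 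Since $\MQB$ is $\aleph_1$-closed, this recursion goes through at limit stages.

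\textbf{Carrying this out.} First I would fix $p \in \MQB$ and, using that $\MQB'$ (the dense set from Lemma \ref{existence of suitable dense subset}) is dense, assume without loss of generality $p \in \MQB'$, so $p \in I^*_i$ for some $i < \omega_1$; actually it is cleaner to work directly with the structure of Lemma \ref{existence of suitable dense subset}, but the key point is that each $I^*_i$ is a maximal antichain of $\MQB$. Next, build the decreasing chain $\langle q_i : i < \omega_1 \rangle$ and the increasing chain of nodes $\langle \eta_i : i<\omega_1\rangle$: at a successor stage $i = j+1$, given $q_j \le \bar p^*_{\eta_j}$ (with a slight abuse, writing $\bar p^*_{\eta_j}$ for the condition indexed by $\eta_j$), use that $\bar p^*_i$ is a maximal antichain refining $\bar p^*_j$ in the sense of $(12)$ to find $\eta_i \supseteq \eta_j$ with $\bar p^*_{\eta_i}$ compatible with $q_j$, and then by $(13)$ arrange moreover that $\bar p^*_{\eta_i}$ lies below a member of $I^*_i$; set $q_i$ to be a common extension of $q_j$ and $\bar p^*_{\eta_i}$. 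At limit stages take a lower bound supplied by $\aleph_1$-closedness. The union $\eta = \bigcup_{i} \eta_i$ would then be a branch, but what we actually need is simply that \emph{some} $q_i$ already lies in $\MQB''$ — in fact each $\bar p^*_{\eta_i}$ we select \emph{is} a member of $\MQB''$ by its very definition, and $\bar p^*_{\eta_1} \le_{\MQB} p$ already gives a condition in $\MQB''$ below $p$ (indeed below any $q$ we started with). So the recursion is only needed far enough to reach the first level.

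\textbf{A cleaner version.} Actually the cleanest route is: given $p \in \MQB$, pick any $q_0 \le p$ with $q_0 \in \MQB'$ (by density of $\MQB'$), so $q_0 \in I^*_{i}$ for some $i$. Now $\bar p^*_0 = \langle p^*_\eta : \eta \in {}^1(2^{\aleph_0})\rangle$ is a maximal antichain of $\MQB$, so $q_0$ is compatible with some $p^*_{\langle \alpha_0\rangle}$; then $\bar p^*_1$ refines $\bar p^*_0$ and is maximal, so we find a node of length $2$ extending $\langle\alpha_0\rangle$ whose associated condition is compatible with (the common extension of $q_0$ and) $p^*_{\langle\alpha_0\rangle}$, and by $(13)$ this level-$1$ condition sits below a member of $I^*_1$. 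Iterating $\omega_1$ times with $\aleph_1$-closedness at limits produces the branch $\eta$, and $p^*_{\eta\restriction 1} \le p$ is in $\MQB''$.

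\textbf{The main obstacle} I expect is making sure the recursion does not get stuck: at each successor step we need the chosen level-$i$ condition to be compatible with the running lower bound $q_i$, which requires simultaneously using maximality of $\bar p^*_i$ \emph{and} coherence $(12)$ so that the refinement happens ``below'' the previously chosen node rather than somewhere incompatible. One must check that, having committed to $\eta \restriction (i)$, the set of $\eta' \in {}^{i+1}(2^{\aleph_0})$ with $\eta' \restriction i = \eta\restriction i$ yields conditions $p^*_{\eta'}$ whose join over $\eta'$ is (dense below) $p^*_{\eta\restriction i}$ — this is exactly what $(11)$ and $(12)$ encode, namely that $\langle p^*_{\eta'} : \eta' \restriction i = \eta\restriction i\rangle$ is a maximal antichain below $p^*_{\eta\restriction i}$. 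Granting that, compatibility of $q_i \le p^*_{\eta\restriction i}$ with some such $p^*_{\eta'}$ is automatic, and the rest is routine bookkeeping plus one appeal to $\aleph_1$-closedness; the verification that the limit stages go through and that $p^*_{\eta\restriction 1} \le_{\MQB} p$ is immediate from $(12)$.
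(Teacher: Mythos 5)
There is a genuine gap: throughout your argument you only ever arrange that the chosen condition $p^*_{\eta}$ is \emph{compatible} with a condition below $p$, and then conclude that $p^*_{\eta\upharpoonright 1}\le p$. That step is a non sequitur --- compatibility with $q_0\le p$ does not give $p^*_{\eta}\le p$, and nothing in clauses $(11)$--$(13)$ or in the $\omega_1$-long recursion upgrades it. Note also that the interpolated conditions $q_i$ (common extensions of the running condition and the selected $p^*_{\eta_i}$) are \emph{not} members of $\MQB''$, since $\MQB''$ consists exactly of the conditions $p^*_\eta$ and of nothing below them; so ``some $q_i$ already lies in $\MQB''$'' is false as stated, and the long recursion never actually produces an element of $\MQB''$ below $p$.

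The missing idea is how to convert compatibility into extension, and this is precisely where the antichain structure of the $I^*_i$ must be used. The intended argument is short and needs no recursion: given $p$, use density of $\MQB'$ to find $p_1\le p$ with $p_1\in I^*_i$ for some $i$. By maximality of the antichain $\bar p^*_i$, some $p^*_\eta$ with $\eta\in{}^{i+1}(2^{\aleph_0})$ is compatible with $p_1$. By clause $(13)$, this $p^*_\eta$ lies below some $r\in I^*_i$. Now a common extension of $p^*_\eta$ and $p_1$ is a common extension of $r$ and $p_1$, and since $I^*_i$ is an antichain this forces $r=p_1$; hence $p^*_\eta\le p_1\le p$, and $p^*_\eta\in\MQB''$. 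It is this ``$r$ must equal $p_1$'' step --- absent from your write-up --- that does all the work; without it the approach of walking down the tree of antichains cannot close.
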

\begin{proof}
Let $p\in \MQB.$ By Lemma \ref{existence of suitable dense subset}, we can find some $i<\omega_1$ and some $p_1\in I^*_i \subseteq \MQB'$ such that $p_1 \leq p.$
By $(13),$ each $p^*_\eta, \eta \in(^{i+1}(2^{\aleph_0})),$ is stronger  than some condition in $I^*_i.$
If there is no $\eta$ with $p^*_\eta \leq p_1,$ then we contradict with $(11).$ The result follows immediately.
\end{proof}
Finally note that the map
\[
\eta \mapsto p^*_\eta
\]
defines an isomorphism between a dense subset of $\Col(\aleph_1, 2^{\aleph_0})$ and $\MQB''.$ It follows that
\[
\MQB \simeq \MQB'' \simeq \Col(\aleph_1, 2^{\aleph_0}) \simeq \Add(\aleph_1, 1).
\]
The theorem follows. \hfill$\Box$
\section{A note on $\aleph_1$-closed forcing notions of size continuum}
In this section we present a result about $\aleph_1$-closed forcing notions of size continuum which will be used in
 section 5 for the proof of Theorem 1.3.

Assume that $2^{\aleph_0} =\aleph_2$ and that $\MRB$ is an   $\aleph_1$-closed forcing notions of size continuum which does not collapse
$\aleph_2.$ It then follows from \cite{g-h} that the forcing notion $\MRB$ does not add a fresh sequence of ordinals of size $\aleph_1$ and hence it is
$\aleph_2$-distributive. The following result is proved in \cite{balcar} Theorem 2.1.
\begin{lemma}
\label{base tree lemma}
There exists a sequence $\langle T_\alpha: \alpha< \aleph_2       \rangle$ of subsets of $\MRB$ such that:
\begin{enumerate}
\item Each $T_\alpha$ is a maximal antichain in $\MRB,$

\item  If $T=\bigcup\{ T_\alpha: \alpha < \aleph_2   \},$ then $(T, \geq_{\MRB})$ is a tree of height $\aleph_2,$ where $T_\alpha$
is the $\alpha$-th level of $T$,

\item  Each $t\in T$ has $\aleph_2$-many immediate successors,

\item $T$ is dense in $\MRB.$
\end{enumerate}
\end{lemma}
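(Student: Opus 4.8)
The plan is to invoke the general base tree theorem for $\sigma$-closed separative partial orders, which is exactly \cite{balcar} Theorem 2.1, and check that its hypotheses are met by our forcing $\MRB$. Recall the standard formulation: if $\PP$ is a separative $\sigma$-closed (indeed $\aleph_1$-closed) partial order such that below every condition there are $\kappa$ pairwise incompatible elements for some fixed $\kappa$, and moreover every maximal antichain has size $\le\kappa$, then $\PP$ has a \emph{base tree} of height $\mathrm{cf}(\ldots)$ with all the properties (1)--(4) listed — the levels are refining maximal antichains, the union is dense, and every node splits into $\kappa$ immediate successors. In our situation $\kappa = 2^{\aleph_0} = \aleph_2$. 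So the first step is to verify that $\MRB$ is homogeneous enough that \emph{every} condition has exactly $\aleph_2$ incompatible extensions and every maximal antichain has size at most $\aleph_2$; the latter is immediate since $|\MRB| \le 2^{\aleph_0} = \aleph_2$, and the former follows because $\MRB$ is non-trivial and $\aleph_1$-closed (so below every $p$ we get a maximal antichain, which must have size exactly $\aleph_2$ — size $\le\aleph_1$ is impossible for a non-trivial $\aleph_1$-closed forcing, by the argument already used in the proof of the unnamed Lemma in Section 2 and in Lemma 2.x). If some condition $p$ has a maximal antichain below it of size $<\aleph_2$, pass to a condition below which this cannot happen, or argue directly that $\aleph_2$-distributivity plus non-triviality forces splitting into continuum-many pieces densely often.

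The second step is to feed this into Balcar's construction. One builds the levels $T_\alpha$ by recursion on $\alpha < \aleph_2$: $T_0$ a maximal antichain with $\aleph_2$ elements; at successors, for each $t \in T_\alpha$ replace $t$ by a maximal antichain below $t$ of size $\aleph_2$ consisting of conditions each of which is \emph{below no} element of the collection of "small" pieces accumulated so far (this is the density/refinement step, using exactly the kind of argument as in Section 2 where one diagonalizes against a set of size $<2^{\aleph_0}$); at limits $\alpha$, one uses $\aleph_1$-closedness when $\mathrm{cf}(\alpha) = \omega$ to pick lower bounds of the branches and thus keep the tree from dying, and one uses a bookkeeping enumeration of a dense set or of $\MRB$ itself to guarantee that $T = \bigcup_\alpha T_\alpha$ is dense. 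Properties (1)--(3) are then built in by construction, and density (4) comes from the bookkeeping: every $p \in \MRB$ is handled at some stage so that some node of some $T_\alpha$ lies below $p$.

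The main obstacle — and the reason this is cited rather than reproved — is the limit-stage argument. At limits of cofinality $\omega$, $\aleph_1$-closedness gives lower bounds, but one must ensure the resulting antichain is still \emph{maximal} (not just an antichain), and that the splitting-into-$\aleph_2$ property is preserved; at limits of cofinality $\omega_1$, one needs $\aleph_2$-distributivity of $\MRB$ (which holds by \cite{g-h} since $\MRB$ does not collapse $\aleph_2$, hence adds no fresh $\aleph_1$-sequence) to guarantee that the intersection of the first $\alpha$ many dense open sets $\bigcup T_\beta$ is still dense, so that a maximal antichain refining all earlier levels exists. Since all of this is precisely the content of \cite{balcar} Theorem 2.1 once the hypotheses ($\aleph_1$-closed, separative, every condition splits into $\aleph_2$, all maximal antichains of size $\le\aleph_2$) are checked, the proof here reduces to that verification and a citation; I would write it as such rather than reproducing Balcar's recursion in full.
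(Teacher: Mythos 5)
Your proposal matches the paper exactly: the paper gives no proof of this lemma at all, simply citing \cite{balcar} Theorem 2.1, with the needed hypotheses ($\aleph_1$-closure, $\aleph_2$-distributivity via \cite{g-h}, size $\leq 2^{\aleph_0}=\aleph_2$) already established in the surrounding text. Your additional verification that every condition splits into $\aleph_2$-many incompatible extensions and your sketch of the recursion are correct and consistent with what Balcar--Doucha--Hru\v{s}\'ak's construction requires.
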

We denote the above tree $T$ by $T(\MRB)$, and call it a base tree of $\MRB.$ Note that by clause $(4)$, $\MRB \simeq T(\MRB).$

\section{Specializing  $\aleph_2$-trees which have few branches}
\label{Specializing aleph-2-trees which have few branches}
In this section we consider trees of size and height $\k$ which have $\leq \k$-many branches, and define a suitable forcing notion
 for specializing them. As we allow our trees to have cofinal branches,  we need a slightly different definition of the concept of a special tree than the usual ones.

 \begin{definition}
 Let $\kappa=\varrho^+,$ where $\varrho$ is a regular cardinal.
 \begin{enumerate}
 \item A  $\kappa$-tree is a tree of height and size $\kappa$ (so we allow the levels of the tree to have size $\kappa$).

 \item (~\cite{baumgartner}~)
 Let $T$ be a $\kappa$-tree. $T$ is  \emph{special} if there exists a function $F: T \rightarrow \varrho$ such that for all
 $x, y, z\in T$ if $x \leq_T y,z$ and $F(x)=F(y)=F(z),$ then either $y \leq_T z$ or $z \leq_T y.$
 \end{enumerate}
 \end{definition}
By \cite[Theorem 8.1]{baumgartner},  a $\kappa$-special tree
has at most $\kappa$-many cofinal branches.

Let  $\kappa=\varrho^+,$ where $\varrho$ is a regular cardinal.  Let also $\theta> \kappa$ be large enough regular and let  $\prec$ be a well-ordering of $H(\theta)$. Let $\Lambda$ denote the set of all $\kappa$-trees $T$ with at most
 $\kappa$-many cofinal branches, such that for all $t\in T, Suc_T(t),$ the set of successors of $t$ in $T$, has size $\kappa.$

We define a map $\star: \Lambda \to \Lambda$,
 where  to each $T \in \Lambda$, assigns a subtree $T^\star=\star(T)$ of $T$, such that $T^\star$ is dense in $T$ and it has no cofinal branches. Thus let $T \in \Lambda.$
Let $\langle b_\alpha: \alpha < \k  \rangle$ be the $\prec$-least  enumeration of the cofinal branches through $T$, and for each $\alpha < \k$ set
\begin{center}
 $s_\alpha =$ the $\leq_T$-least element of $b_\alpha \setminus \bigcup_{\beta<\alpha}b_\beta$.
\end{center}
Finally set
  \[
 T^\star=\{ t\in T: \neg(\exists \alpha, ~ s_\alpha <_T t \in b_\alpha )            \}.
 \]

 \begin{lemma}
 \label{t-star dense in T}
 $(a)$ $T^\star$ has no cofinal branches.

 $(b)$ $(T^\star, \geq_T)$ is dense in $(T, \geq_T)$ (when considered as forcing notions), in particular $(T^\star, \geq_T) \simeq (T, \geq_T)$.
\end{lemma}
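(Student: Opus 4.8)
The plan is to prove both parts by a single careful analysis of the construction of $T^\star$. For part (a), suppose toward a contradiction that $c$ is a cofinal branch through $T^\star$. Since $c$ is in particular a cofinal branch through $T$, it equals $b_\alpha$ for some $\alpha<\kappa$; take the least such $\alpha$. By definition $s_\alpha$ is the $\leq_T$-least element of $b_\alpha\setminus\bigcup_{\beta<\alpha}b_\beta$, so $s_\alpha\in b_\alpha=c$. Because $c$ is cofinal and $T$ has height $\kappa$, there is some $t\in c$ with $s_\alpha<_T t$; then $t\in b_\alpha$ and $s_\alpha<_T t\in b_\alpha$, so by the very definition of $T^\star$ we have $t\notin T^\star$, contradicting $t\in c\subseteq T^\star$. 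Hence $T^\star$ has no cofinal branches.

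For part (b), I would show density directly: given $t\in T$, I must find $t'\in T^\star$ with $t'\geq_T t$ (i.e.\ $t'$ below $t$ in the tree order, which is the forcing extension). Fix $t\in T$. If $t$ itself lies in $T^\star$ we are done, so assume not; then there is $\alpha$ with $s_\alpha<_T t\in b_\alpha$. The key point is that $t$ has $\kappa$-many immediate successors in $T$ (by the definition of $\Lambda$), but only $\kappa$-many cofinal branches pass through $t$; more to the point, I want an extension of $t$ that eventually escapes \emph{every} branch that currently ``spoils'' $t$. Here is the cleaner route: consider the set $B_t=\{\alpha<\kappa : t\in b_\alpha\}$ of branch-indices whose branch passes through $t$; this has size $\leq\kappa$. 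I claim there is $t'\geq_T t$ with $t'\in T^\star$. Indeed, since $T$ is a $\kappa$-tree of height $\kappa$ and each node has $\kappa$ successors while $|B_t|\le\kappa$, I can pick $t'>_T t$ of height (say) $\mathrm{ht}(t)+1$ lying off $\bigcup_{\alpha\in B_t}b_\alpha$ unless $t$ is on only boundedly many... — this is where the actual argument has to be done carefully, and it is the main obstacle, so let me restate it as a lemma-style step.

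The heart of the matter: for any $t\in T$, I want to produce $t'\geq_T t$ such that for every $\alpha<\kappa$, it is \emph{not} the case that $s_\alpha<_T t'\in b_\alpha$. Equivalently, for every $\alpha$ with $t'\in b_\alpha$, we need $t'\leq_T s_\alpha$. Let $C=\{\alpha<\kappa: t\in b_\alpha\}$; for $\alpha\in C$, $s_\alpha\in b_\alpha$ and $s_\alpha$ is comparable with $t$. Split $C$ into $C_0=\{\alpha\in C: t\leq_T s_\alpha\}$ and $C_1=\{\alpha\in C: s_\alpha<_T t\}$. If $C_1=\emptyset$ then $t$ already lies in $T^\star$ (check: if $t\in b_\alpha$ for some $\alpha$ with $s_\alpha<_T t$ then $\alpha\in C_1$), done. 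Otherwise, note that each $b_\alpha$ for $\alpha\in C_1$ is a single cofinal branch; I extend $t$ to some $t'$ of successor height above $\sup\{\mathrm{ht}(t)\}$ chosen among the $\kappa$-many successors of $t$ so as to \emph{leave} each $b_\alpha$, $\alpha\in C_1$ — this is possible provided $|C_1|<\kappa$, since distinct branches through $t$ diverge at distinct successors of $t$ and there are $\kappa$ successors. If $|C_1|=\kappa$ one iterates: well-order $C_1$ and build a decreasing (in $T$-order: increasing in height) sequence $\langle t_\xi:\xi<\mathrm{cf}(\kappa)\rangle$ above $t$ diagonalizing out of the branches; since $\kappa=\lambda^+$ is regular this is a chain of length $<\kappa$ with an upper bound in $T$ (as branches through $T$ are cofinal, partial chains of length $<\kappa$ extend), and the limit/final node $t'$ lies on none of the offending branches, hence $t'\in T^\star$. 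Thus $T^\star$ is dense in $T$. Since $T^\star$ is a dense suborder of $T$ (with the same order $\geq_T$), the two forcing notions have the same Boolean completion, i.e.\ $(T^\star,\geq_T)\simeq(T,\geq_T)$.

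The step I expect to be the main obstacle is exactly the diagonalization in part (b) when $\kappa$-many cofinal branches pass through a given node $t$: one must verify that a chain of length less than $\kappa$ built above $t$, each successive node escaping one more branch, actually has an upper bound inside $T$ lying on \emph{none} of those $\kappa$-many branches, and that this upper bound in turn lies in $T^\star$ (not merely outside the listed branches but genuinely satisfying the defining negation). The regularity of $\kappa=\lambda^+$ and the fact that every branch has to be cofinal (so short chains extend) are what make this go through; I would want to state it as an explicit sublemma before invoking it. Everything else — part (a), and the passage from ``$T^\star$ dense in $T$'' to $\simeq$ — is routine.
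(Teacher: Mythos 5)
Part (a) is correct and is essentially the paper's argument. One small imprecision: a cofinal branch $c$ of $T^\star$ need not \emph{equal} some $b_\alpha$ (since $T^\star$ is not obviously downward closed in $T$), only be contained in one; but your argument uses nothing more than $c\subseteq b_\alpha$ together with cofinality of $c$ to find $t\in c$ with $s_\alpha<_T t\in b_\alpha$, so this is harmless.

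Part (b) has a genuine gap, and it sits exactly where you flagged it. Your fallback for the case $|C_1|=\kappa$ does not work: since $\kappa=\lambda^+$ is regular, $\cf(\kappa)=\kappa$, so your diagonalizing sequence would have length $\kappa$, not $<\kappa$; and, more fundamentally, the claim that a chain of length $<\kappa$ built node by node in a $\kappa$-tree has an upper bound in $T$ is false in general --- that is exactly the Aronszajn phenomenon, and nothing in the definition of $\Lambda$ gives closure of $T$ under limits of short chains. The missing idea, which is what makes the paper's proof a one-liner, is that the bad case never occurs: the witness $\alpha$ to $t\notin T^\star$ is \emph{unique}. Indeed, the set $b_\alpha\setminus\bigcup_{\beta<\alpha}b_\beta$ is upward closed in the chain $b_\alpha$ with minimum $s_\alpha$, so $s_\alpha<_T t\in b_\alpha$ forces $t\notin b_\beta$ for every $\beta<\alpha$; if $\alpha_1<\alpha_2$ were two witnesses one would get $t\in b_{\alpha_1}$ and $t\notin b_{\alpha_1}$ simultaneously. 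Hence your $C_1$ has at most one element, and one simply picks an immediate successor $t'$ of $t$ avoiding the $|\alpha|+1<\kappa$ many branches $b_\beta$, $\beta\le\alpha$ (possible because $t$ has $\kappa$ immediate successors and each branch contains at most one of them). The verification that $t'\in T^\star$ still needs a line: if $s_\gamma<_T t'\in b_\gamma$, then $s_\gamma\le_T t$ (immediacy) and $t\in b_\gamma$, and the same uniqueness argument forces $\gamma=\alpha$, which is excluded by the choice of $t'$; note that your reduction to $C_1$ silently drops the subcase $s_\gamma=t$, which must also be ruled out here. With the uniqueness observation your argument collapses to the paper's; without it, the $|C_1|=\kappa$ branch of your proof is unrepairable as written.
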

\begin{proof}
$(a)$ Assume not, and let $b$ be a branch through $T^\star$. then for some $\alpha, b \subseteq b_\alpha,$ and then clearly $b \cap (T \setminus T^\star) \neq \emptyset,$
which is a contradiction.

$(b)$ Let $t\in T.$ If $t \in T^\star,$ then we are done; so assume that $t\notin T^\star.$ Then for some $\alpha < \k, s_\alpha <_T t \in b_\alpha.$ Let $t' \in Suc_T(t) \setminus \bigcup_{\beta \leq \alpha}b_\beta.$
Then $t' \in T^\star$ and $t' \geq_T t.$
\end{proof}
\begin{lemma}
\label{modifying specialization}
Assume there exists $F: T^\star \rightarrow \varrho$ such that if $F(x)=F(y),$ then $x$ and $y$ are incomparable in $T$. Then there exists
$F': T \rightarrow \varrho$ such that $F' \supseteq F$ and $F'$ specializes $T$.
\end{lemma}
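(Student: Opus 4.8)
The plan is to extend $F$ to all of $T$ by assigning colors to the "removed" part $T \setminus T^\star$ in such a way that the resulting coloring $F'$ witnesses that $T$ is special in the sense of the definition — i.e.\ whenever $x \le_T y, z$ and $F'(x) = F'(y) = F'(z)$, then $y$ and $z$ are $\le_T$-comparable. First I would recall the structure of $T \setminus T^\star$: by the definition of $T^\star$, a node $t$ lies in $T \setminus T^\star$ exactly when there is some $\alpha < \kappa$ with $s_\alpha <_T t$ and $t \in b_\alpha$. Since the $b_\alpha$ are distinct cofinal branches and $s_\alpha$ is chosen to lie on $b_\alpha$ but not on any earlier $b_\beta$, the sets $\{t \in b_\alpha : s_\alpha \le_T t\}$ need not be disjoint, so I would first want a clean way to index the removed nodes. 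For each $t \in T \setminus T^\star$, let $\alpha(t)$ be the least $\alpha$ such that $s_\alpha <_T t$ and $t \in b_\alpha$; this is well defined. The removed part is then partitioned (or at least covered in a controlled way) by the "tails" $C_\alpha = \{t : \alpha(t) = \alpha\}$, each of which is a $\le_T$-chain sitting above $s_\alpha$ inside $b_\alpha$.

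The next step is to color each tail $C_\alpha$. The key point is that a $\le_T$-chain of length $\le \kappa = \lambda^+$ — in fact of length $\le \lambda$ if we are careful, or in general of length $< \kappa$ once we cut it appropriately — can be colored injectively by $\lambda$-many colors, and in a chain an injective coloring trivially satisfies the specialization requirement among its own members (since any two are comparable). But I must also guard against a collision between a node in $C_\alpha$ and a node of $T^\star$ (or of another tail) producing a configuration $x \le_T y,z$ with all three the same color and $y \perp_T z$. The safest route is to reserve colors: fix a partition $\lambda = \lambda_0 \sqcup \lambda_1$ into two pieces each of size $\lambda$, use (a copy of) $F$ landing in $\lambda_0$ for $T^\star$, and color all of $T \setminus T^\star$ using colors from $\lambda_1$. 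Then any monochromatic triple $x \le_T y, z$ has all three nodes on the same side. If all three are in $T^\star$, comparability of $y,z$ follows from the hypothesis on $F$ (indeed $F(y) = F(z)$ already forces $y \perp_T z$ to fail, so the triple is impossible — even better). If all three are in $T \setminus T^\star$, I need the coloring of $T \setminus T^\star$ to itself be "special" (or injective along chains); since $y, z \ge_T x$ and $x \in T \setminus T^\star$ means $x$ is already past some $s_\alpha$ on $b_\alpha$, and then $y, z$ are as well, all three lie on the single branch $b_\alpha$ and are pairwise comparable — so again no harm. Here I would need to check that $t \in T \setminus T^\star$ and $t' >_T t$ implies $t' \in T \setminus T^\star$, which is immediate from $s_{\alpha(t)} <_T t <_T t'$ and $t' \in b_{\alpha(t)}$ (the latter because $t \in b_{\alpha(t)}$ and $b_{\alpha(t)}$ is a branch, so it contains everything above any of its elements — wait, a branch is downward closed, so I should instead note that $t' \in b_{\alpha(t)}$ because... in fact a cofinal branch $b_\alpha$ is a maximal chain, hence contains $t'$ iff $t'$ is comparable with all of $b_\alpha$; since $t' >_T t$ and everything in $b_\alpha$ below $t$ is below $t'$, and $t$ itself... let me just say $t'$ extends $t$ along $b_\alpha$, using that $T \setminus T^\star$ is "upward closed" which I would verify directly from the definition $T^\star = \{t : \neg \exists \alpha\, (s_\alpha <_T t \in b_\alpha)\}$: its complement is $\{t : \exists \alpha\, s_\alpha <_T t \in b_\alpha\}$, and if $s_\alpha <_T t \in b_\alpha$ and $t <_T t'$ then, since $b_\alpha$ is a cofinal branch and $t \in b_\alpha$, the successor of $t$ along $b_\alpha$... this needs that $t' \in b_\alpha$, which holds if $t'$ is on the branch; in general $t'$ need not be).

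Let me revise: upward closure of $T \setminus T^\star$ is \emph{not} automatic, because $t' >_T t$ can branch off $b_\alpha$. So I would instead only use that each tail $C_\alpha \subseteq b_\alpha$ is a chain, and handle the general monochromatic triple $x \le_T y,z$ with all three colored from $\lambda_1$ as follows: each of $x, y, z$ lies in some $C_{\alpha_x}, C_{\alpha_y}, C_{\alpha_z}$. If I arrange the coloring of $T \setminus T^\star$ so that nodes in distinct tails get distinct colors \emph{unless} the tails are "nested" along a common branch in a compatible way — actually the cleanest fix is: enumerate $T \setminus T^\star$ and, processing along the tree order, give each node a color in $\lambda_1$ different from the colors of all its $\le_T$-predecessors in $T \setminus T^\star$; since each node has $< \kappa$ predecessors and $\kappa$ is regular with $\lambda$-many colors available... predecessors could number up to $\lambda$, which is fine as $|\lambda_1| = \lambda$, but a node at limit level of cofinality $\lambda$ has $\lambda$ predecessors and we'd run out. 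The honest resolution, and the one I expect the paper uses: a chain in $T$ has order type $< \kappa = \lambda^+$, hence can be partitioned into $\le \lambda$-many pieces each of order type $< \lambda$ (so $\le \lambda$ colors suffice per branch, reusing across branches is where a triple could go wrong)...

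I'll commit to the reserved-colors approach with the per-tail injective coloring and flag the comparability bookkeeping for monochromatic triples straddling two tails as the main obstacle: \textbf{The main obstacle} is precisely ensuring that if $x \le_T y, z$ with $F'(x) = F'(y) = F'(z) \in \lambda_1$, then $y \le_T z$ or $z \le_T y$; this requires choosing the colorings of the individual tails $C_\alpha$ coherently — e.g.\ by first well-ordering the tails, coloring $C_\alpha$ injectively using a block of $\lambda$ colors disjoint from the blocks used by all $C_\beta$ that share a node with $C_\alpha$ (there are $\le \kappa$ such, and only $<\kappa$ below any fixed level by the tree structure, so a transfinite recursion of length $\kappa$ using $\lambda \cdot \kappa = \kappa$... ) — and verifying the definition goes through. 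I would write the recursion so that whenever two tails $C_\alpha, C_\beta$ are not $\le_T$-comparable as sets (neither is an end-extension of the other inside a common branch) their color-blocks are disjoint, while nested tails inherit consistent injective colorings; then a monochromatic $x \le_T y, z$ in $T \setminus T^\star$ forces $x,y,z$ into a single branch, giving comparability. Finally, set $F' = F$ on $T^\star$ (composed with the inclusion $\lambda_0 \hookrightarrow \lambda$) and $F' = $ the recursively defined coloring on $T \setminus T^\star$, and check the three cases (all in $T^\star$: impossible by hypothesis on $F$; all in $T\setminus T^\star$: handled; mixed: impossible since $\lambda_0 \cap \lambda_1 = \emptyset$) to conclude $F'$ specializes $T$.
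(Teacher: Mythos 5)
Your proposal does not close, and the route you commit to cannot be repaired as stated. You correctly isolate the decomposition of $T\setminus T^\star$ into the tails $C_\alpha=\{t: s_\alpha<_T t\in b_\alpha\}$ (and indeed each $t\in T\setminus T^\star$ determines a \emph{unique} such $\alpha$, since branches are downward closed and $s_\alpha\notin b_\beta$ for $\beta<\alpha$), and you correctly identify the monochromatic triple straddling two tails as the crux; but you then explicitly leave that case open, and the machinery you propose fails on cardinality grounds. A tail lies inside a cofinal branch of a tree of height $\kappa=\lambda^+$, so it may have $\kappa$ elements and admits no injection into $\lambda$; and there are up to $\kappa$ many tails, so pairwise disjoint nonempty color blocks drawn from a palette of size $\lambda$ cannot be assigned to the mutually incomparable ones. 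Finally, recoloring $T^\star$ into a half $\lambda_0$ of the palette yields $F'\upharpoonright T^\star=\iota\circ F\neq F$, so the stated conclusion $F'\supseteq F$ is lost.

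The missing idea is that no injectivity along chains is needed at all: in the definition of special used here, a monochromatic triple contained in a single chain is harmless, because any two elements of a chain are comparable. The paper simply puts $G(t)=F(s_\alpha)$ for the unique $\alpha$ with $s_\alpha<_T t\in b_\alpha$, i.e.\ colors each tail \emph{constantly} by the color of its entry point $s_\alpha\in T^\star$, and sets $F'=F\cup G$. The strong hypothesis on $F$ (equal values force incomparability) then does all the work: given $x\le_T y,z$ with $F'(x)=F'(y)=F'(z)$, replace each of $x,y,z$ lying in $T\setminus T^\star$ by the corresponding $s_\alpha$; the resulting nodes of $T^\star$ all lie in the linearly ordered set of predecessors of $y$ (respectively of $z$) and carry the same $F$-value, hence coincide, which forces $y$ and $z$ into one branch $b_\alpha$ (or forces $y=z$), so they are comparable. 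No reserved colors, no transfinite recursion, and $F'\supseteq F$ holds on the nose.
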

\begin{proof}
Define $G: (T \setminus T^\star) \rightarrow \varrho$ as follows: Let $t\in (T \setminus T^\star)$.
Then for some $\alpha < \k$, $s_\alpha <_T t \in b_\alpha$. Set $G(t)=F(s_\alpha).$ It is now easily seen that $F'=F \cup G$ is as required.
\end{proof}
Thus, in order to define a forcing notion which specializes $T$, it suffices to define a forcing notion which adds a function $F: T^\star \rightarrow \varrho$ as in
Lemma \ref{modifying specialization}.

\begin{definition}
The forcing notion $\MQB(T^\star)$, for specializing $T^\star$,    is defined as follows:
\begin{enumerate}
\item [(a)]  A condition in $\MQB(T^\star)$ is a partial function $f: T^\star \rightarrow \varrho$ such that:
\begin{enumerate}
\item [(1)]$ \dom(f)$ has size $< \varrho$,
\item [(2)] If $x <_{T} y$ and $x,y\in \dom(f)$ then $f(x) \neq f(y)$.
\end{enumerate}
\item [(b)] $f \leq_{\MQB(T^\star)} g$ iff $f \supseteq g$.
\end{enumerate}
\end{definition}
It is clear that the forcing notion
$\MQB(T^\star)$ is  $\varrho$-directed closed. But in general, there is no guarantee that the forcing $\MQB(T^\star)$ satisfies the
$\k$-c.c., or  preserves all cardinals, even if we assume $\GCH$ (see \cite{cummings} and \cite{shelah-stanley}).
\begin{lemma}
\label{collapsing after specializing}
Forcing with $\MQB(T^\star) \ast T$ collapses $\k$ into $\varrho$.
\end{lemma}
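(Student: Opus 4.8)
The goal is to show that forcing with $\MQB(T^\star) \ast T$ collapses $\k = \l^+$ to $\l$. The natural idea is to exploit the tension between the two factors: $\MQB(T^\star)$ adds a specializing function $F\colon T^\star \to \l$ (as in Lemma \ref{modifying specialization}), while $T$ itself is forced to have a cofinal branch, namely the generic branch $\dot b$ through the tree $(T, \geq_T)$. Once we have a cofinal branch $b$ through $T$ and a function $F$ on $T^\star$ whose fibers are antichains, I would look at $F \restriction (b \cap T^\star)$. Since $b$ is cofinal, by density of $T^\star$ in $T$ (Lemma \ref{t-star dense in T}(b)) the set $b \cap T^\star$ is cofinal in $b$, hence has order type $\k$ (as a linearly ordered subset of the branch). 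But $F$ restricted to any chain is injective: if $x <_T y$ are both in $T^\star$ then $f(x) \neq f(y)$ for every condition $f$, so the generic $F$ has $F(x)\neq F(y)$. Therefore $F \restriction (b \cap T^\star)$ is an injection from a set of size $\k$ into $\l$, which is the desired collapse.

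**Key steps, in order.** First I would fix notation for the generic object: let $G * H$ be generic for $\MQB(T^\star) \ast T$ over $V$, let $F = \bigcup G\colon T^\star \to \l$ be the generic specializing function, and let $b = \{\, t \in T : t \in H \,\}$ be the generic branch. Second, I would argue that $b$ is a cofinal branch through $(T,\leq_T)$: it is a chain because $H$ is a filter on $(T,\geq_T)$ and $T$ is separative/a tree, and it is cofinal because for every level $T_\alpha$ the set of $t$ in that level or above is dense in $(T,\geq_T)$ (every node of $T$ has height $<\k$ and has successors at all higher levels, using that each node has $\k$-many successors and $T$ has height $\k$), so $H$ meets it. Third, by Lemma \ref{t-star dense in T}(b), $T^\star$ is dense in $T$, so $\{\, t \in T^\star : t \geq_T s \,\}$ is dense below any $s$; hence for cofinally many $\alpha < \k$ there is a node of $b$ in $T^\star$ above level $\alpha$, giving $\operatorname{otp}(b \cap T^\star) = \k$. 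Fourth, the map $t \mapsto F(t)$ for $t \in b \cap T^\star$ is injective: any two such nodes are $<_T$-comparable (both lie on the branch $b$) and lie in $T^\star$, so the genericity of $F$ — i.e. clause (2) in the definition of $\MQB(T^\star)$, which is forced by density of conditions with any given element in their domain — forces their $F$-values distinct. Combining, in $V[G*H]$ there is an injection of a set of order type $\k$ into $\l$, so $\k$ is collapsed to (at most) $\l$; since $\l$ is preserved (or simply since $|\l| = \l$), $\k$ is collapsed to $\l$.

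**Main obstacle.** The delicate point is not any single step but making sure the density arguments are carried out in the correct model and against the correct notion of condition in the iteration $\MQB(T^\star) \ast T$. Specifically, one must check that "$b$ is cofinal" is genuinely forced: this uses that $T$ still has height $\k$ after forcing with $\MQB(T^\star)$ and that each node still has successors at arbitrarily high levels — which holds because $\MQB(T^\star)$ is $\l$-directed closed (so adds no new subsets of $\l$ and, more relevantly, the tree $T$ and its level structure are unchanged). The other point requiring care is that "$b \cap T^\star$ has size $\k$" really follows from density: a priori $b$ could run through $T\setminus T^\star$ on a final segment, but by construction of $T^\star$ (Lemma \ref{t-star dense in T}(a), $T^\star$ has no cofinal branch, yet is dense), any node of $b$ has an extension in $b$ lying in $T^\star$, because the set of $T^\star$-nodes above it is dense in $(T,\geq_T)$ and $H$ is generic. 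I would also remark that, since $\MQB(T^\star)$ collapses nothing below $\l$ and the whole iteration has size $\le \k$, the collapse of $\k$ to $\l$ is the only cardinal collapsed below $\k^+$. Once these density-and-genericity verifications are pinned down, the injection $F\restriction(b\cap T^\star)$ finishes the proof.
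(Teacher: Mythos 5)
Your proof is correct and follows essentially the same route as the paper: the paper first uses Lemma \ref{t-star dense in T}(b) to replace $T$ by $T^\star$ and then restricts the generic specializing function $F$ to the generic cofinal branch of $T^\star$, which is exactly your injection $F\restriction(b\cap T^\star)$ obtained via the density of $T^\star$ in $T$. The only cosmetic difference is that you work with the generic branch of $T$ and intersect with $T^\star$, whereas the paper passes to $T^\star$ as a forcing notion at the outset; the underlying argument is identical.
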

\begin{proof}
By Lemma \ref{t-star dense in T}$(b)$, $\MQB(T^\star) \ast T \simeq \MQB(T^\star) \ast T^\star.$
Let $G$ be $\MQB(T^\star)$-generic over $V$ and  $H$ be $T^\star$-generic over $V[G].$  Let also
$F=\bigcup \{ f: f\in G\}$. Then $F: T^\star \rightarrow \varrho$ and  for all $x<_T y$ in $T^*$ we have
$F(x) \neq F(y).$
Let $b \in V[G \ast H]$ be a cofinal branch in $T^\star$.
 Then $F \upharpoonright b: b \rightarrow \varrho$
is an injection, and $|b|=\k$. Hence $\k$ is collapsed into $\varrho.$
\end{proof}
Given an infinite cardinal $\kappa,$ let $\Add(\aleph_0, \kappa)$
denote the Cohen forcing for adding $\kappa$-many new Cohen reals; thus conditions are finite partial functions
$p: \kappa \times \omega \to \{0, 1 \}$
ordered by reverse inclusion. The forcing is c.c.c., and hence it preserves all cardinals and cofinalities.

For our purpose in the next section, we will work with $\Add(\aleph_0, \k)$-names of trees, and we now modify the above
results to cover this case.

Assume $\kappa=\varrho^+,$ where $\varrho$ is a regular cardinal and let $\lusim{T}$ be an $\Add(\aleph_0, \k)$-name for a $\k$-tree which is forced to have $\leq \k$-many cofinal branches. Let $\lusim{T}^\star$ be an  $\Add(\aleph_0, \k)$-name such that it is forced by
 $\Add(\aleph_0, \k)$ that ``$\lusim{T}^\star$ is the subtree of $\lusim{T}$ defined using the function $\star$''.  We assume, without loss of generality, that it is forced by $\Add(\aleph_0, \k)$ that ``the set of nodes of $\lusim{T}^\star$ is $\k \times \k$ and for each $\alpha < \k,$ the $\alpha$-th level of $\lusim{T}^\star$ is $\{\alpha\} \times \k$''.
We now define $\MQB_A(\lusim{T}^\star)\in V$ as follows:

\begin{definition}
\begin{enumerate}
\item [(a)] A condition in $\MQB_A(\lusim{T}^\star)$ is a partial function  $f: \k \times \k \rightarrow \varrho$ such that:
\begin{enumerate}
\item [(1)] $\dom(f)$ is a  subset of $ \k \times \k$ of size $< \varrho$.
\item [(2)] If $x,y\in \dom(f)$  and $f(x) = f(y)$, then $\Vdash_{\Add(\aleph_0, \k)}$``$x$ and $y$ are incompatible in the tree ordering, $x \perp y$''. .
\end{enumerate}
\item [(b)] $f \leq_{\MQB_A(\lusim{T}^\star)} g$ iff $f \supseteq g$.
\end{enumerate}
\end{definition}
Note that we defined the forcing notion $\MQB_A(\lusim{T}^\star)$ in $V$ and not in the generic extension by $\Add(\aleph_0, \k).$
\begin{lemma}
\label{basic properties of specializing forcing for names}
\begin{enumerate}
\item [(a)] $\MQB_A(\lusim{T}^\star)$ is $\varrho$-directed closed.
\item [(b)] Let $G$ be  $\MQB_A(\lusim{T}^\star)$-generic over $V$. Then in $V[G]$, there exists a function $F: \k \times \k \rightarrow \varrho$,
such that for any $H$ which is $\Add(\aleph_0, \k)$-generic over $V[G]$, $F$ is a specializing function for $\lusim{T}^\star[H].$
\end{enumerate}
\end{lemma}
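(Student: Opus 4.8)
The plan is to verify (a) by a straightforward union argument and (b) by combining a density argument (for totality of $F$) with the defining property of the ground-model forcing relation used in clause (2).

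For (a), let $D\subseteq\MQB_A(\lusim{T}^\star)$ be directed with $|D|<\l$ and put $f^\ast=\bigcup D$. Since $D$ is directed, any two conditions in $D$ have a common extension in $D$, so $f^\ast$ is a function; its domain is $\bigcup_{f\in D}\dom(f)$, which has size $<\l$ because $\l$ is regular and each $|\dom(f)|<\l$; and its range lies in $\l$. Clause (2) for $f^\ast$ is inherited: given $x,y\in\dom(f^\ast)$ with $f^\ast(x)=f^\ast(y)$, pick $f_1,f_2\in D$ with $x\in\dom(f_1)$, $y\in\dom(f_2)$ and a common extension $f_3\in D$; then $f_3(x)=f_3(y)$, so $\Vdash_{\Add(\aleph_0,\k)}``x\perp y\text{ in }\lusim{T}"$. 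Hence $f^\ast\in\MQB_A(\lusim{T}^\star)$ is a lower bound of $D$, so $\MQB_A(\lusim{T}^\star)$ is $\l$-directed closed.

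For (b), set $F=\bigcup\{f:f\in G\}$. As $G$ is a filter it is directed, so $F$ is a function with range $\subseteq\l$. To see $\dom(F)=\k\times\k$, fix $u\in\k\times\k$; I claim $\{f\in\MQB_A(\lusim{T}^\star):u\in\dom(f)\}$ is dense. Given $f$ with $u\notin\dom(f)$, choose $\gamma\in\l\setminus\ran(f)$ (possible since $|\ran(f)|\le|\dom(f)|<\l$) and let $f'=f\cup\{(u,\gamma)\}$; then $|\dom(f')|<\l$, and $f'$ still satisfies clause (2) because the only new pairs involve $u$, and the value $\gamma$ is attained nowhere else in $\dom(f)$. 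So $F$ is total. Now let $H$ be $\Add(\aleph_0,\k)$-generic over $V[G]$; then $H$ is in particular $\Add(\aleph_0,\k)$-generic over $V$, so $\lusim{T}[H]$ is a $\k$-tree with at most $\k$ cofinal branches and $\lusim{T}^\star[H]=\star(\lusim{T}[H])$. If $x\ne y$ are comparable nodes of $\lusim{T}[H]$ both lying in $\lusim{T}^\star[H]$, and $F(x)=F(y)$, take $f\in G$ with $x,y\in\dom(f)$ and $f(x)=f(y)$; clause (2) gives $\Vdash^{V}_{\Add(\aleph_0,\k)}``x\perp y"$, and since $H$ is generic over $V$ this means $x$ and $y$ are incomparable in $\lusim{T}[H]$, a contradiction. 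Thus $F\restriction\lusim{T}^\star[H]$ is injective on chains of $\lusim{T}^\star[H]$, which in particular makes it a specializing function for $\lusim{T}^\star[H]$ (and, by Lemma \ref{modifying specialization}, it extends to a specialization of all of $\lusim{T}[H]$).

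The one point that needs care — and which I regard as the main obstacle — is the last implication: clause (2) of $\MQB_A(\lusim{T}^\star)$ speaks about $\Vdash$ computed in the ground model $V$, whereas $H$ is taken generic over the larger model $V[G]$. This is resolved by the observation that $H$ is automatically $\Add(\aleph_0,\k)$-generic over $V$ as well, so the meaning of $\Vdash^V_{\Add(\aleph_0,\k)}$ applies directly to $\lusim{T}[H]$; alternatively one can appeal to the product lemma, using that $\MQB_A(\lusim{T}^\star)$ adds no reals (it is $\l$-closed with $\l$ uncountable) so that $\Add(\aleph_0,\k)$ is the same poset in $V$ and in $V[G]$, and rearrange $V[G][H]$ as $V[H][G]$. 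Everything else is routine bookkeeping about partial functions of size $<\l$.
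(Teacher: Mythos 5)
Your proof is correct; the paper states this lemma without proof (treating it as evident), and your argument is exactly the intended routine one: unions of small directed families for (a), a density argument for totality of $F$ in (b), and the observation that an $\Add(\aleph_0,\k)$-generic over $V[G]$ is also generic over $V$ (the poset being the same in both models), so that clause (2) of the definition of $\MQB_A(\lusim{T}^\star)$ transfers to $\lusim{T}^\star[H]$. You correctly isolate and resolve the only non-trivial point, so nothing further is needed.
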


 The next lemma can be proved as in Lemma \ref{collapsing after specializing}.
\begin{lemma}
\label{collapsing cardinals with names}
Let $\lusim{T}$ be an $\Add(\aleph_0, \k)$-name for a $\k$-tree  which has $\leq \k$-many cofinal branches. Then
\begin{center}
$\Vdash_{\MQB_A(\lusim{T}^\star)\ast \lusim{\Add}(\aleph_0, \k)}$``~forcing with $\lusim{T}$ collapses $\k$''.
\end{center}
\end{lemma}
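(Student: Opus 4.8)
The plan is to mimic the proof of Lemma \ref{collapsing after specializing}, but now working over the intermediate model obtained after forcing with $\MQB_A(\lusim{T}^\star)$ and carrying the $\Add(\aleph_0,\k)$-name through the iteration. Let me describe the setup. We force first with $\MQB_A(\lusim{T}^\star)$ to get $G$, and then with $\Add(\aleph_0,\k)$ over $V[G]$ to get $H$; write $W = V[G][H]$. By Lemma \ref{basic properties of specializing forcing for names}$(b)$, in $V[G]$ there is a function $F : \k\times\k \to \l$ which becomes, after passing to $W$, a specializing function for the tree $T^\star := \lusim{T}^\star[H]$ — that is, $F$ is defined on all of $\k\times\k$ and whenever $x <_{T} y$ (in $T := \lusim{T}[H]$) we have $F(x)\ne F(y)$. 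Since $F \in V[G] \subseteq W$, it is available in $W$ before we force with $\lusim{T}[H]$.

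Next I would force with $T^\star = \lusim{T}^\star[H]$ over $W$ to obtain a generic $K$; note that by Lemma \ref{t-star dense in T}$(b)$ applied inside $W$ we have $T^\star \simeq T$ as forcing notions, so forcing with $\lusim{T}[H]$ over $W$ is the same as forcing with $T^\star$ over $W$, and in particular $K$ determines a cofinal branch $b$ through $T^\star$ lying in $W[K]$ (a generic filter on a tree of height $\k$ with all levels nonempty and the tree dense-in-itself yields a cofinal branch). Now, exactly as in the proof of Lemma \ref{collapsing after specializing}, the restriction $F \restriction b$ is an injection from $b$ into $\l$, and $b$ has order type $\k$, so $\k$ is collapsed to have cardinality $\le\l$ in $W[K]$. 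Since all three forcings $\MQB_A(\lusim{T}^\star)$, $\Add(\aleph_0,\k)$, $\lusim{T}$ are assembled into the iteration $\MQB_A(\lusim{T}^\star)\ast\lusim{\Add}(\aleph_0,\k)\ast\lusim{T}$, and the argument applies to an arbitrary generic, we conclude
\[
\Vdash_{\MQB_A(\lusim{T}^\star)\ast \lusim{\Add}(\aleph_0, \k)}\text{``forcing with }\lusim{T}\text{ collapses }\k\text{''},
\]
as desired.

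The only point requiring care — and the one I expect to be the main obstacle — is the order of the forcings versus the order in which the relevant objects are constructed. The specializing function $F$ is produced by $\MQB_A(\lusim{T}^\star)$ in $V[G]$, \emph{before} the Cohen reals $H$ are added, whereas the actual tree $\lusim{T}^\star[H]$ only exists after $H$ is added; so one must check that the clause ``$\Vdash_{\Add(\aleph_0,\k)}$ $x\perp y$'' built into the definition of $\MQB_A(\lusim{T}^\star)$ indeed guarantees that $F(x)=F(y)$ implies $x$ and $y$ are incomparable in $\lusim{T}[H]$ for the specific generic $H$ — but this is immediate since $x\perp y$ is forced by every condition. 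One should also verify that $\MQB_A(\lusim{T}^\star)$ does not accidentally destroy the hypothesis that $\lusim{T}$ is forced to be a $\k$-tree with $\le\k$ cofinal branches: it does not, because $\lusim{T}$ is an $\Add(\aleph_0,\k)$-name and $\MQB_A(\lusim{T}^\star)$ acts on a disjoint coordinate, so the relevant forcing statements about $\lusim{T}$ are preserved into $V[G]$. With these observations the argument of Lemma \ref{collapsing after specializing} transfers verbatim, replacing $V$ by $V[G][H]$ throughout.
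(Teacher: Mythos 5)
Your argument is correct and is essentially the paper's own proof: both pass from $\lusim{T}$ to $\lusim{T}^\star$ via the density Lemma \ref{t-star dense in T}$(b)$, take $F=\bigcup G_1$ as the specializing function guaranteed by Lemma \ref{basic properties of specializing forcing for names}, and observe that $F$ restricted to the generic cofinal branch of $T^\star$ injects a set of size $\k$ into $\l$. The point you flag about the order of the forcings is handled exactly as you say, since incomparability is forced by the trivial condition of $\Add(\aleph_0,\k)$.
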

\begin{proof}
By Lemma \ref{t-star dense in T}$(b)$,
\[
(\MQB_A(\lusim{T}^\star)\ast \lusim{\Add}(\aleph_0, \k)) \ast \lusim{T} \simeq (\MQB_A(\lusim{T}^\star)\ast \lusim{\Add}(\aleph_0, \k)) \ast \lusim{T}^\star,
\]
and hence it suffices to show that
\begin{center}
$\Vdash_{\MQB_A(\lusim{T}^\star)\ast \lusim{\Add}(\aleph_0, \k)}$``~forcing with $\lusim{T}^\star$ collapses $\k$''.
\end{center}
Let $(G_1\ast G_2)*H$ be $(\MQB_A(\lusim{T}^\star)\ast \lusim{\Add}(\aleph_0, \k)) \ast \lusim{T}^\star$-generic over $V$ and
$F=\bigcup\{f: f\in G_1  \}.$ By Lemma \ref{basic properties of specializing forcing for names}, $F: \kappa \times \kappa \rightarrow \varrho$, and if
$T^\star=\lusim{T}^\star[G_2],$ then for $x<_{T^\star}y, F(x) \neq F(y)$.

Let $b \in V[(G_1\ast G_2)*H]$
be a cofinal branch of $T^\star$. Then $F \upharpoonright b: b \rightarrow \varrho$
is an injection, and hence $\k$ is collapsed into $\varrho.$
\end{proof}

\section{A negative answer to Williams question when the continuum is regular}
\label{A negative answer to Williams question when the continuum is regular}
In this section we prove Theorem \ref{main theorem2}.
In Subsection \ref{The main forcing construction and its basic properties} we define the main forcing construction $\MPB$ and prove some of its basic properties.
In Subsection \ref{preservation of cardinal kappa} we show that forcing with $\PP$ preserves $\kappa$.
Then in Subsection \ref{More on the forcing notion P} more properties of the forcing notion $\PP$
are proved and finally in subsection \ref{Completing the proof of Theorem 1.3} we complete the proof of Theorem \ref{main theorem2}.

\subsection{The main forcing construction and its basic properties}
\label{The main forcing construction and its basic properties}
In this subsection we define the main forcing notion that will be used in the proof of Theorem \ref{main theorem2}.
Let   $\kappa$ be a supercompact cardinal, and let  $\lambda > \kappa$ be a $2$-Mahlo cardinal. By \cite{laver}, We may  assume that $\kappa$ is Laver indestructible, i.e.,  the supercompactness of $\kappa$ is preserved under $\kappa$-directed
closed forcing notions, and that $\GCH$ holds at and above $\kappa.$

Let  $\Phi: \lambda \to H(\lambda)$ be such that for each $x \in H(\lambda), \Phi^{-1}(x) \cap \{\beta +2: \beta$ is Mahlo $\}$ is unbounded in $\lambda.$ Such a $\Phi$ exists as $|H(\lambda)|=2^{<\lambda}=\lambda$ and $\lambda$ is a $2$-Mahlo cardinal.

We define, by induction on $\a \leq \lambda,$  an iteration
\[
\PP=\langle \langle \PP_\alpha: \alpha \leq \lambda  \rangle, \langle \lusim{\MQB}_\alpha: \alpha < \lambda \rangle \rangle
\]
of forcing notions of length $\l.$
Suppose  $\a \leq \lambda$ and  we have defined $\PP_\beta,$ for all $\beta < \alpha$. We define
$\PP_\a$ as follows.
\begin{definition}
\label{main forcing definition for williams question}
 A condition $p$ is in $\PP_\a,$ if and only if $p$ is a function with domain $\alpha$ such that for every $\beta < \alpha, \Vdash_{\beta}$``$p(\beta) \in \lusim{\MQB}_\beta$'', where:
\begin{enumerate}
\item $\supp(p)$ has size less than $\k,$ where $\supp(p)$ denotes the the support of $p$.
\item $\{ \beta \in \supp(p):  \beta \equiv 0 (\text{mod~} 3)$ or $\beta \equiv 2 (\text{mod~} 3)        \}$
has size less than $ \aleph_1.$

\item If  $\beta< \kappa$ and  $\beta \equiv 0 (\text{mod~} 3)$ or $\beta \equiv 2 (\text{mod~} 3)$, then  $\Vdash_{\beta}$`` $\lusim{\MQB}_\beta=\lusim{\Col}(\aleph_1, \aleph_2+|\beta|)$''.

\item If $\beta \geq \kappa$, $\beta \equiv 0 (\text{mod~} 3)$ and $\beta$ is inaccessible, then $\Vdash_{\beta}$`` $\lusim{\MQB}_\beta=\lusim{\Add}(\aleph_1, \kappa)$''.

\item If $\beta \geq \kappa$, $\beta \equiv 1 (\text{mod~} 3)$ and $\beta$-$1$ is inaccessible, then $\Vdash_{\beta}$`` $\lusim{\MQB}_\beta=\lusim{\Col}(\kappa, 2^{|\PP_\beta|})=\lusim{\Col}(\kappa, 2^{|\beta|})$'' (as $|\PP_\beta|=|\beta|$).

\item If $\beta \geq \kappa$, $\beta \equiv 2 (\text{mod~} 3)$, $\beta$-$2$ is inaccessible and if $\Vdash_{\beta}$``$\kappa=\aleph_2$'' and $\Phi(\beta)$ is
a  $\PP_{\beta} * \lusim{\Add}(\aleph_0, \kappa)$-name for a $\kappa$-tree which has $\leq \kappa$-many
cofinal branches,
  then $\Vdash_{\beta}$`` $\lusim{\MQB}_\beta=\lusim{\MQB}_A(\Phi(\beta)^\star)$''.\footnote{$\Phi(\beta)^\star$ is defined from $\Phi(\beta)$ as in
  Section \ref{Specializing aleph-2-trees which have few branches} using a fixed well-ordering of a large initial segment of the universe.}

\item Otherwise, $\Vdash_{\beta}$`` $\lusim{\MQB}_\beta$
is the trivial forcing notion''.

\end{enumerate}
\end{definition}
Also set $\PP=\PP_{\lambda}$.

The next lemma gives  some basic properties of the forcing notion $\PP.$
\begin{lemma}
\label{basic properties of the forcing notion P}
\begin{enumerate}
\item [$(a)$] $\PP$ is $\aleph_1$-directed closed, and hence it preserves $CH$.

\item [$(b)$] If $\mu \in (\kappa, \lambda]$ is Mahlo,
then $\PP_\mu$ satisfies the $\mu$-c.c.

\item [$(c)$] $\PP_\lambda$ collapses all cardinals in $(\aleph_1, \kappa)$ into $\aleph_1,$ so if $\kappa$ is not collapsed, then $\Vdash_{\PP}$``$\kappa=\aleph_2$''.

\item [$(d)$] In $V^{\PP}, \lambda$ is preserved,
but all $\mu \in (\kappa, \lambda)$ are collapsed, so if $\kappa$ is not collapsed, then $\Vdash_{\PP}$``$\lambda=\kappa^+=\aleph_3$''.

\item [$(e)$] $\Vdash_{\PP}$``~$2^{\aleph_1}=\lambda$''.
\end{enumerate}
\end{lemma}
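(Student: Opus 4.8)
The plan is to verify clauses $(a)$--$(e)$ in turn. Clauses $(c)$ and $(d)$ are stated conditionally (``if $\kappa$ is not collapsed''); the non-collapsing of $\kappa$ is the main point of the whole construction and is proved separately in Subsection~\ref{preservation of cardinal kappa}, so inside this lemma $(c)$--$(e)$ reduce to bookkeeping about which cardinals the iterands collapse. For $(a)$ I would first observe that every iterand is $\aleph_1$-directed closed: the factors of the form $\lusim{\Col}(\aleph_1,\,\cdot\,)$ and $\lusim{\Add}(\aleph_1,\kappa)$ are $\aleph_1$-directed closed, $\lusim{\Col}(\kappa,\,\cdot\,)$ is even $\kappa$-directed closed, and $\MQB_A(\Phi(\beta)^\star)$ is $\aleph_1$-directed closed by Lemma~\ref{basic properties of specializing forcing for names}$(a)$ (its conditions have countable domain and are ordered by reverse inclusion). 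Then, given a directed $D\subseteq\PP$ with $|D|\le\aleph_0$, one builds a lower bound by recursion on the coordinate: the union $s$ of the supports of the members of $D$ has $|s|<\kappa$ (a countable union of sets of size $<\kappa$, with $\kappa$ regular and uncountable) and $|\{\beta\in s:\beta\not\equiv1\pmod 3\}|<\aleph_1$ (a countable union of countable sets), so the candidate lower bound satisfies clauses $(1)$ and $(2)$ of Definition~\ref{main forcing definition for williams question}; and at each $\beta\in s$ the set $\{d(\beta):d\in D\}$ is forced to be a countable directed subset of the $\aleph_1$-directed closed $\lusim{\MQB}_\beta$, hence has a lower bound. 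Since $\PP$ is then $\aleph_1$-directed closed, it adds no reals, and so preserves $\CH$.

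For $(b)$ I would run the standard $\Delta$-system argument for mixed-support iterations below an inaccessible cardinal. First, an induction on $\alpha<\mu$ shows $|\PP_\alpha|<\mu$ and $\Vdash_\alpha$``$|\lusim{\MQB}_\alpha|<\mu$'': each iterand has size $<\mu$ because $\mu$ is inaccessible (a strong limit) and $\GCH$ holds at and above $\kappa$, so the relevant cardinal powers remain below $\mu$, and the support constraints propagate the estimate through successor and limit stages. Now let $\{p_i:i<\mu\}\subseteq\PP_\mu$. The supports are $\mu$-many sets of size $<\kappa<\mu$, so by the $\Delta$-system lemma there is $I\in[\mu]^\mu$ with $\{\supp(p_i):i\in I\}$ forming a $\Delta$-system with root $R$; since $\mu$ is regular, $\zeta:=\sup R<\mu$, and after shrinking $I$ we may assume $\supp(p_i)\cap(\zeta+1)=R$ and then, since $|\PP_{\zeta+1}|<\mu$, that $p_i\upharpoonright(\zeta+1)$ is one fixed condition $q$ for all $i\in I$. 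For distinct $i,j\in I$ the conditions $p_i$ and $p_j$ agree below $\zeta+1$ and have supports disjoint above $\zeta$, so their coordinatewise amalgamation is a common extension; hence $\{p_i:i\in I\}$ is not an antichain. The point needing care is that this amalgamation really is a condition of $\PP_\mu$, which one checks by the routine induction on coordinates using the upward persistence of forcing statements. Running the same size induction up to $\lambda$ also yields $|\PP|\le\lambda$, which I use below.

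Clauses $(c)$--$(e)$ are then bookkeeping. For $(c)$: given a $V$-cardinal $\theta\in(\aleph_1,\kappa)$, any $\beta<\kappa$ with $\beta>\theta$ and $\beta\equiv0\pmod 3$ has $\lusim{\MQB}_\beta=\lusim{\Col}(\aleph_1,\aleph_2+|\beta|)$, which collapses $|\beta|\ge\theta$ onto $\aleph_1$; since $\aleph_1$ is preserved by $(a)$, the interval $(\aleph_1,\kappa)$ contains no cardinal of $V^\PP$, so $\kappa=\aleph_2$ unless $\kappa$ is collapsed. For $(d)$: by $(b)$ with $\mu=\lambda$ (legitimate, as $\lambda$ is Mahlo), $\PP=\PP_\lambda$ has the $\lambda$-c.c., so $\cf(\lambda)=\lambda$ is preserved and $\lambda$ stays a cardinal; and since $\lambda$ is Mahlo there are cofinally many inaccessible $\gamma<\lambda$, each $\equiv0\pmod 3$ (being a cardinal), so $\beta:=\gamma+1\equiv1\pmod 3$ with $\beta-1$ inaccessible, whence $\lusim{\MQB}_\beta=\lusim{\Col}(\kappa,2^{|\beta|})$ collapses $2^{|\beta|}=|\beta|^+$ onto $\kappa$; these stages cover all of $(\kappa,\lambda)$, so $\lambda=\kappa^+=\aleph_3$ unless $\kappa$ is collapsed. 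For $(e)$: $2^{\aleph_1}\ge\lambda$ because there are $\lambda$-many inaccessible $\beta\ge\kappa$ with $\beta\equiv0\pmod 3$, at each of which the iterand $\lusim{\Add}(\aleph_1,\kappa)$ adds $\kappa$-many new subsets of $\aleph_1$, pairwise distinct and distinct across stages; and $2^{\aleph_1}\le\lambda$ by a nice-name count using the $\lambda$-c.c., $|\PP|\le\lambda$, and $\lambda^{\aleph_1}=\lambda$ (inaccessibility of $\lambda$).

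The main obstacle within this lemma is the bookkeeping for $(b)$: propagating the bound $|\PP_\alpha|<\mu$ through the limit stages of this mixed-support iteration (full support at the $\equiv1\pmod 3$ coordinates, countable support at the others), and then checking that two conditions agreeing on a $\Delta$-system root really amalgamate to a genuine condition of $\PP_\mu$. The genuinely difficult step of the overall argument --- that forcing with $\PP$ does not collapse $\kappa$ --- is not part of this lemma and is carried out in Subsection~\ref{preservation of cardinal kappa}.
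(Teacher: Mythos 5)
Your proposal is correct, and for clauses $(a)$, $(c)$, $(d)$, $(e)$ it fills in exactly the routine details the paper declares ``clear''. The one place where you genuinely diverge is $(b)$, the only clause the paper actually argues. The paper presses down: it enumerates a putative antichain of size $\mu$, defines the regressive function $F(\xi)=$ the least $\eta$ with $\supp(p^\xi)\restriction\xi\subseteq\eta$ on the stationary set of inaccessibles below $\mu$ (this is where Mahloness is used directly), applies Fodor to get a fixed bound $\eta$, thins so that supports are increasing, and then uses $|\PP_\eta|<\mu$ to find two conditions whose restrictions to $\eta$ are compatible, hence compatible outright. You instead run the generalized $\Delta$-system lemma on the supports (which needs only that $\mu$ is regular and $\nu^{<\kappa}<\mu$ for $\nu<\mu$, i.e.\ inaccessibility), stabilize the trace on $\zeta+1$ where $\zeta=\sup R$, and use $|\PP_{\zeta+1}|<\mu$ to make the restrictions literally equal before amalgamating disjointly above. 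Both arguments reduce to the same finishing move --- the size bound $|\PP_\alpha|<\mu$ for $\alpha<\mu$, which you rightly flag as the bookkeeping that must be propagated through the mixed-support limits --- and both amalgamation/compatibility checks are the same routine induction on coordinates. Your route is marginally more economical in hypotheses (inaccessibility rather than Mahloness of $\mu$ suffices for the chain condition itself), while the paper's pressing-down version avoids invoking the $\Delta$-system lemma and the extra stabilization steps; since the lemma assumes $\mu$ is Mahlo anyway, nothing is gained or lost either way.
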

\begin{proof}
$(a)$ is clear as all forcing notions considered in the iteration are $\aleph_1$-directed closed and the support of the iteration is at least countable.

$(b)$ Assume $A \subseteq \PP_\mu$ is a maximal antichain of size $\mu$ and let $\langle p^\xi: \xi < \mu        \rangle$ be an enumeration
of $A$. Define $F: \mu \rightarrow \mu$ by
$F(\xi)=$ the least $\eta$ such that $\supp(p^\xi) \upharpoonright \xi \subseteq \eta.$ $F$ is a regressive function on the stationary set $X=\{\xi<\mu: \xi$ is inaccessible  $\}$, and hence $F$ is constant on some stationary subset $Y$ of $X$. Let $\eta$ be the resulting
fixed value. So for all $\xi \in Y, \supp(p^\xi) \upharpoonright \xi \subseteq \eta.$ We may further suppose that if $\xi_1 < \xi_2$
are in $Y$,
then $\supp(p^{\xi_1}) \subseteq \xi_2.$

As $\PP_\eta$ has size less than $\mu,$ there are $\xi_1 < \xi_2$ in $Y$ such that $p^{\xi_1} \upharpoonright \eta$ is compatible with
$p^{\xi_2} \upharpoonright \eta$. But then in fact $p^{\xi_1}$
is compatible with $p^{\xi_2}$ and we get a contradiction.

$(c)$, $(e)$ and the fact that forcing with $\PP_\lambda$ collapses all cardinals in $(\aleph_1, \kappa)$ into $\aleph_1$ are clear and the rest of $(d)$ follows from $(b).$
The lemma follows.
\end{proof}

\subsection{Preservation of $\k$}
\label{preservation of cardinal kappa}
In this subsection we show that forcing with $\PP$ preserves $\k$.
Let
\[
G=\langle \langle G_\alpha: \alpha \leq \lambda  \rangle, \langle H_\alpha: \alpha < \lambda \rangle \rangle
\]
be $\PP$-generic over $V$, i.e., each $G_\a$ is $\PP_\a$-generic over $V$ and $H_\a$ is $\lusim{\MQB}_\alpha[G_\a]$-generic over $V[G_\alpha].$

For each $\a \leq \l,$ we   define the  forcing notion
$\PP_\a^U \in V$, the $\PP_\a^U$-name $\lusim{\PP}_\a^C$ for a forcing notion, in such a way that:
\begin{enumerate}
\item [(a)] There are  projections $\chi_\a: \PP_\a \to \PP_\a^U$ and  $\pi_\a: \PP_\a^U \ast \lusim{\PP}_\a^C \to \PP_\a.$

\item [(b)] \label{transferring names}
If $G^U_\a=\chi_\a[G_\a]$, for
 $\a \leq \l$, then there exists a function $\Psi \in V[G^U_\lambda]$ such that for each ordinal $\alpha=\beta+2 > \kappa$, where $\beta$ is inaccessible,
$\Psi(\a) \in V[G^U_\a]$ is a $\PP^C_{\a} * \lusim{\Add}(\aleph_0, \kappa)$-name
 such that
$$\Phi(\a)[G^U_\a \ast H]=\Psi(\a)[H],$$
for any $H$ which is $\PP^C_{\a} * \lusim{\Add}(\aleph_0, \kappa)$-generic
over $V[G^U_\a].$ Further, if   $\Phi(\a)$ is
a  $\PP_{\a} * \lusim{\Add}(\aleph_0, \kappa)$-name for a $\kappa$-tree with $\leq \kappa$-many
cofinal branches, then in $V[G^U_\lambda], \Psi(\a)$  is a $\PP^C_{\a} * \lusim{\Add}(\aleph_0, \kappa)$-name for a $\kappa$-tree with $\leq \kappa$-many
cofinal branches.

\item [(c)]  $\PP_\a^U$ is $\kappa$-directed closed.

\item [(d)] For  every $\gamma \in [\a, \lambda]$, $\PP_\gamma^U\Vdash$`` $\lusim{\PP}_\a^C$ is $\aleph_1$-directed closed''. \footnote{It is also possible to define the forcing notions $\PP^U_\a \in V$  and  $\PP_\a^C \in V[G^U_\a]$ directly,  by setting
\[
\PP^U_\a = \{ p \in \PP_\a: \supp(p) \subseteq \{\b < \l: \beta \equiv 1 (\text{mod~} 3)\}  \}
\]
and
\[
\PP_\a^C=\{ p \in \PP_\a: \supp(p) \subseteq \{\b < \l: \beta \equiv 0 (\text{mod~} 3) \text{~or~} \beta \equiv 2 (\text{mod~} 3)   \} \text{~and~} p \text{~is compatible with ~} G^U_\a   \},
\]
where $G^U_\a=G_\a \cap \PP^U_\a.$ We will give a more explicit definition for $\PP_\a^C$, that will be useful in the proof of Lemma \ref{main lemma for forcing P-2}.
}

\end{enumerate}
 Let us first define the forcing notions $\PP^U_\a$ and the corresponding projections $\chi_\a$.
Let
\[
U=\{\b < \l: \beta \equiv 1 (\text{mod~} 3) \text{~and~} \b-1 \text{~is inaccessible~}    \}.
\]
For $\b \in U$, let
$\MQB^U_\b$ be the term forcing, whose conditions
are $\PP_\b$-names $\lusim p$ such that $\Vdash_{\PP_\b }$``$\lusim p \in \lusim{\Col}(\k, 2^{|\b|})$'', ordered by $\lusim p \leq_{\MQB^U_\b} \lusim q$ iff
$\Vdash_{\PP_\b }$``$\lusim p \leq_{\lusim{\Col}(\k, 2^{|\b|})} \lusim q$''.
Then set
$\PP_\a^U$ be the $< \kappa$-support product of the forcing notions $\MQB_\b^U,$ where $\b \in U \cap \alpha$. Then there is a natural projection
$\chi_\a: \PP_\a \to \PP_\a^U$.

We now define, by  induction on $\a \leq \lambda$, the $\PP_\a^U$-name $\lusim{\PP}_\a^C$  and the corresponding projection $\pi_\a: \PP_\a^U \ast \lusim{\PP}_\a^C \to \PP_\a.$ We also inductively verify (d) along the way.
\begin{enumerate}
\item \underline{$\a \leq \k$}: Let $\PP_\a^C$ to be $\PP_\a / \chi_\a^{-1}[\dot{G}^U_\a]$.  It is then clear that $\PP_\a^C \simeq \PP_\a$ and there exist a  forcing isomorphism  $\pi_\a: \PP_\a^U \ast \lusim{\PP}_\a^C \simeq \PP_\a.$ It is also clear that clause (d) holds.

\item \underline{$\a=\beta+1>\k,$ $\beta \equiv 0 (\text{mod~} 3)$ and $\beta$ is inaccessible}: Then set
\begin{center}
 $\Vdash_{\PP_\a^U}$``$\lusim{\PP}_\a^C \simeq \lusim{\PP}_\b^C \ast \lusim{\Add}(\aleph_1, \kappa)$''.
\end{center}
By the induction, there is a projection $\pi_\b: \PP_\b^U \ast \lusim{\PP}_\b^C \to \PP_\b.$ Since $\PP_\a^U=\PP_\b^U$, and since by clause (d) we can regard each $\PP^U_\b \ast \lusim{\PP}_\b^C$-name for an element of $\Add(\aleph_1, \kappa)$ as a $\PP_\b$-name,  this induces the projection $\pi_\a: \PP_\a^U \ast \lusim{\PP}_\a^C \to \PP_\a,$ which is defined by $\pi_\a(p, (\lusim q, \lusim r))=(\pi_\b(p, \lusim q), \lusim r).$ It is also clear that clause (d) holds in this case.

\item \underline{$\a=\beta+1>\k,$  $\beta \equiv 1 (\text{mod~} 3)$ and $\beta$-$1$ is inaccessible}: Let
$\Vdash_{\PP_\b^U}$``$\lusim{\PP}_\a^C=\lusim{\PP}_\b^C$''\footnote{Note that we can consider $\lusim{\PP}_\b^C$ as a $\PP_\a^U$-name as well.}.
In this case  $\PP_\a^U = \PP_\b^U \times \MQB^U_\b,$  and by the induction hypothesis, there is a projection $\pi_\b: \PP_\b^U \ast \lusim{\PP}_\b^C \to \PP_\b.$   This induces the projection $\pi_\a: \PP_\a^U \ast \lusim{\PP}_\a^C \to \PP_\a,$ which is defined by $\pi_\a((p, \lusim q), \lusim r)=(\pi_\b(p, \lusim r), \lusim q)$. Again clause (d) is easily verified.

\item \underline{$\a=\beta+1>\k,$  $\beta \geq \kappa$, $\beta \equiv 2 (\text{mod~} 3)$ and $\beta$-$2$ is inaccessible:}
Then we may assume that $\Phi(\beta)$ is
a  $\PP_{\beta} * \lusim{\Add}(\aleph_0, \kappa)$-name
for a $\kappa$-tree which has $\leq \kappa$-many
cofinal branches, as otherwise the forcing at stage $\beta$ is the trivial forcing and the result follows from the induction hypothesis.
By the induction hypothesis, we have  projections $\chi_\b,~\pi_\b,$
and so there exists $\Psi(\b) \in V[G^U_\b]$ as in clause (b) above. Set
\begin{center}
 $\Vdash_{\PP_\a^U}$``$\lusim{\PP}_\a^C \simeq \lusim{\PP}_\b^C \ast \lusim{\MQB}_A(\Psi(\beta)^\star)$''.
\end{center}
By the choice of $\Psi(\beta),$ it is clear that we  have a natural projection
$\pi_\a: \PP_\a^U \ast \lusim{\PP}_\a^C \to \PP_\a$, which extends $\pi_\b$. Clause (d) is easily verified in this case as well.

\item \underline{$\a=\b+1$ is not as above}: Then set
\begin{center}
 $\Vdash_{\PP_\a^U}$``$\lusim{\PP}_\a^C =\lusim{\PP}_\b^C$''.
\end{center}
Set also $\pi_\a=\pi_\b.$ It is also clear that clause (d) holds.

\item \underline{$\a$ is a limit ordinal}: Then set
\begin{center}
 $\Vdash_{\PP_\a^U}$``$\lusim{\PP}_\a^C$ is the countable support iteration of $\langle  \lusim{\PP}_\b^C : \b<\a       \rangle$''.
\end{center}
Note that  $\pi_\a$ can be defined in a uniform way from  $\pi_\b$'s, $\b < \a.$ To be more precise, let $\pi_\a:  \PP_\a^U \ast \lusim{\PP}_\a^C \to \PP_\a$
be defined by
\[
\pi_\a(p, \langle \lusim q_\b: \b< \a  \rangle)= \langle  \pi_\b(p, \lusim q_\b): \b < \a         \rangle.
\]
It is evident that $\pi_\a(1_{\PP_\a^U \ast \lusim{\PP}_\a^C})=1_{\PP_\a}$ and that $\pi_\a$ is order preserving. Now suppose $\langle p, \langle \lusim q_\b: \b< \a  \rangle \rangle \in \PP_\a^U \ast \lusim{\PP}_\a^C$, $r \in \PP_\a$ and suppose that $r \leq \pi_\a(p, \langle \lusim q_\b: \b< \a  \rangle)= \langle  \pi_\b(p, \lusim q_\b): \b < \a         \rangle.$ 
By \ref{main forcing definition for williams question}(2), the set
$$S=\{ \beta \in \supp(r):  \beta \equiv 0 (\text{mod~} 3) \text{~or~} \beta \equiv 2 (\text{mod~} 3)        \}$$
is at most countable. By induction on $\b \in S,$ we can find $(\bar p_\b, \lusim{\bar q}_\b) \in \PP_\b^U \ast \lusim{\PP}_\b^C$ such that
\begin{itemize}
\item [(6-1)] $\pi_\b(\bar p_\b, \lusim{\bar q}_\b) \leq r \restriction \b.$
\item [(6-2)] If $\b_0 < \b_1$ are in $S$, then $\bar p_{\b_1} \leq \bar p_{\b_0}$.
\end{itemize}
Each $\bar p_\b$, for $\b \in S$, is in $\PP^U_\a$, and since it is $\k$-closed, we can find  $\bar p \in \PP^U_\a$ which extends all $\bar p_\b$'s, $\b \in S$. Then $(\bar p, \langle \lusim{\bar q}_\b: \b< \a \rangle) \in \PP_\a^U \ast \lusim{\PP}_\a^C$, where for $\b \in \a \setminus S,$
$\Vdash_{\PP_\b^U}$``$\lusim{\bar q}_\b=1_{\lusim{\PP}_\b^C}$'', and it satisfies
\[
\pi_\a(\bar p, \langle \lusim{\bar q}_\b: \b \in S \rangle)= \langle \pi_\b(\bar p, \lusim{\bar q}_\b): \b<\a      \rangle \leq r.
\]

\end{enumerate}
Clause (d) follows easily from the induction hypothesis and the fact that $\PP^U_\b$'s are $\kappa$-directed closed and hence $\k$-distributive.

\begin{lemma}
\label{main lemma for forcing P-2}
For every $\a \leq \l,$  $\PP_\lambda^U\Vdash$`` $\lusim{\PP}_\a^C$ is  $\kappa$-c.c.''.
  \end{lemma}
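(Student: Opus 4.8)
The plan is to prove the statement by induction on $\alpha\le\lambda$. Since $\kappa$ is Laver indestructible and $\PP^U_\alpha$ is $\kappa$-directed closed (Lemma~\ref{main lemma for forcing P-1}), in $V[G^U_\alpha]$ the cardinal $\kappa$ is still supercompact, hence Mahlo and inaccessible, and $\CH$ holds since no reals are added. It is convenient to prove the sharper statement $\PP^U_\alpha\Vdash$``$\PP^C_\alpha$ is $\kappa$-c.c.'' and then deduce the lemma from the fact that a $\kappa$-closed forcing preserves the $\kappa$-c.c.\ (any new antichain of size $\kappa$ yields, by deciding its elements along a decreasing $\kappa$-chain of conditions, one already in the ground model), applied to $\PP^U_\lambda\simeq\PP^U_\alpha\ast\dot{\PP}^U_{[\alpha,\lambda)}$ whose tail is $\kappa$-directed closed over $V[G^U_\alpha]$.

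For $\alpha<\kappa$ the poset $\PP^C_\alpha$ has size $<\kappa$ (a countable-support iteration of length $<\kappa$ of posets of size $<\kappa$, using inaccessibility of $\kappa$), so it is trivially $\kappa$-c.c. For a successor $\alpha=\beta+1\ge\kappa$ we have $\PP^C_\alpha\simeq\PP^C_\beta\ast\dot{\MQB}^C_\beta$, where $\dot{\MQB}^C_\beta$ is the $\beta$-th iterand; by the induction hypothesis (and $\kappa$-c.c.-preservation past the possible $\Col(\kappa,\cdot)$ factor of $\PP^U$), $\PP^C_\beta$ is $\kappa$-c.c., so since $\kappa$ is regular it suffices to show $\Vdash_{\PP^C_\beta}$``$\dot{\MQB}^C_\beta$ is $\kappa$-c.c.''. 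If $\dot{\MQB}^C_\beta$ is a collapse $\Col(\aleph_1,\aleph_2+|\beta|)$ with $\beta<\kappa$ this holds by cardinality; if it is $\Add(\aleph_1,\kappa)$ then, as $\PP^C_\beta$ is $\aleph_1$-closed so $\CH$ still holds, a $\Delta$-system argument gives the $(2^{\aleph_0})^+=\kappa$-c.c. For a limit $\alpha$ I would use that $\PP^C$ has \emph{countable} support (clause~(2) of Definition~\ref{main forcing definition for williams question}): for $\cf(\alpha)>\omega$ all supports are bounded, and a counting/pigeonhole argument on a putative $\kappa$-antichain — together, when $\cf(\alpha)=\kappa$, with a $\Delta$-system whose countable root is bounded below $\alpha$ and a gluing over that root — reduces to the $\kappa$-c.c.\ of some $\PP^C_{\alpha'}$ with $\alpha'<\alpha$; for $\cf(\alpha)=\omega$ one runs the standard $\Delta$-system argument for countable-support iterations, gluing two conditions that agree on the (countable) root with the help of the $\kappa$-c.c.\ of initial segments coming from the induction hypothesis.

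The one remaining, and main, obstacle is the specialization iterand $\dot{\MQB}^C_\beta=\dot{\MQB}_A(\Psi(\beta)^\star)$ at stages $\beta\ge\kappa$, $\beta\equiv2\pmod{3}$: equivalently, $\PP^C_{\beta+1}$ must be shown $\kappa$-c.c.\ in $V[G^U_\alpha]$ (this reduces to the iterand since $\PP^C_\beta$ is $\kappa$-c.c.\ and a composition of $\kappa$-c.c.\ forcings is $\kappa$-c.c.). Here $\Psi(\beta)^\star$ is forced, by the $\kappa$-c.c.\ forcing $\PP^C_\beta\ast\dot\Add(\aleph_0,\kappa)$ over the $\CH$-model $V[G^U_\alpha]$, to be a $\kappa$-tree with no cofinal branch (Lemma~\ref{t-star dense in T}), and $\MQB_A(\Psi(\beta)^\star)$ consists of partial functions $\kappa\times\kappa\to\lambda$ with countable domain. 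The natural attack: from a putative $\kappa$-sized antichain, use $\CH$ to pass to a $\Delta$-system with root $R$ and then to a subfamily whose restrictions to $R$ all coincide; the incompatibilities on the $\beta$-th coordinate then come exactly from nodes $x,y$ in the private parts with equal value that some condition of $\Add(\aleph_0,\kappa)$ forces to be comparable in $\Psi(\beta)^\star$, and, using branchlessness and the $\aleph_0$-c.c.\ of $\Add(\aleph_0,\kappa)$, the set of ``possible predecessors'' of any countable set of nodes has size $\le\lambda$, which controls one direction of this relation. The delicate point — and where ``at most $\kappa$ cofinal branches'' must be used beyond mere branchlessness, since a bare specialization forcing need not be $\kappa$-c.c. — is to prevent the other direction, in which $y$ lies in the \emph{wide} subtree above $x$, from supporting a $\kappa$-sized antichain; I would attack this by reflecting such an antichain, hence the obstruction, below $\kappa$ via the supercompactness of $\kappa$ in $V[G^U_\alpha]$, where the relevant piece of the forcing becomes small and the contradiction is immediate, exploiting that $\Psi(\beta)$ is a name added by a $\kappa$-c.c.\ forcing so that its comparability structure is captured by a small elementary submodel, with a direct combinatorial analysis of that structure as a fallback.
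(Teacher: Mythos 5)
Your overall skeleton --- induction on $\a$, transfer between $V[G^U_\a]$ and $V[G^U_\lambda]$ using the $\kappa$-closure of the tail of $\PP^U$, triviality below $\kappa$, and $\Delta$-system arguments at the Cohen/collapse coordinates and at limits --- is sound, and it correctly isolates the one genuinely hard case. The paper proceeds somewhat differently: it works directly in $V[G^U_\lambda]$, where $\kappa$ is still supercompact (hence weakly compact) by Laver indestructibility, and derives the chain condition for all of $\PP^C_\a$ at once from a single amalgamation statement (Claim \ref{main corollary to g-h2 paper}): for any $\kappa$-sequence $\langle q_i\rangle$ of conditions there are a set $X$ positive for the weakly compact filter and two refining sequences $\langle q^1_i\rangle,\langle q^2_i\rangle$ with $q^1_i$ compatible with $q^2_j$ for $i<j$ in $X$; this is imported from \cite[Lemma 2.13]{g-h2}. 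Your proposal, however, has a real gap exactly where you wave your hands at the specialization coordinates.

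First, the per-coordinate reduction is not available in the form you use it. You propose to verify $\Vdash_{\PP^C_\beta}$``$\MQB_A(\Psi(\beta)^\star)$ is $\kappa$-c.c.''\ and then compose; but $\PP^C_\beta$ contains the collapses $\Col(\aleph_1,\aleph_2+|\gamma|)$ for $\gamma<\kappa$, so in the intermediate extension $\kappa$ is already $\aleph_2$ and no trace of weak compactness or supercompactness survives there to reflect anything. The reflection must be carried out in $V[G^U_\lambda]$, on names, and for all specialization coordinates in the support simultaneously. Second, and more importantly, after reflecting to an elementary submodel $\mathcal{M}_\lambda$ with $\mathcal{M}_\lambda\cap\kappa=\lambda$ the contradiction is \emph{not} immediate. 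The restriction of the tree name to $\lambda\times\lambda$ is forced to be a $\lambda$-Aronszajn tree, but the obstruction to amalgamating two conditions lies in their private parts: nodes of $\Psi(\beta)^\star$ at levels $\geq\lambda$ carrying equal colours. Each such node determines a cofinal branch of the reflected tree, and these branches do \emph{not} belong to $\mathcal{M}_\lambda$. To force two such nodes to be incomparable one needs, for each pair of such branch names, a maximal antichain $\{\bar p_n\}$ of $\Add(\aleph_0,\kappa)$ lying in $\mathcal{M}_\lambda$ together with distinct same-level nodes $\tau_n\neq\theta_n$ below $\lambda$ which $\bar p_n$ forces into the respective branches --- the ``separating witness'' of \cite{g-h2} --- and these witnesses must be produced coherently at every relevant coordinate, which is where normality and positivity for the weakly compact filter are used to intersect the measure-one sets arising coordinate by coordinate. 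As you yourself observe, a bare specialization forcing of a branchless $\kappa$-tree with small conditions can fail the $\kappa$-c.c.\ even under $\GCH$ (\cite{shelah-stanley}, \cite{cummings}), so something beyond branchlessness, the bound on the number of branches, and cardinal arithmetic must be supplied; your sketch names the right ingredients (c.c.c.-ness of $\Add(\aleph_0,\kappa)$, reflection, the branch bound) but does not assemble them, and the ``direct combinatorial analysis'' you defer to is precisely the content of the cited lemma rather than a routine fallback.
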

  \begin{proof}
Let $G^U_\lambda$ be $\PP^U_\lambda$-generic over $V$ and for each $\beta < \lambda$ let
$G^U_\beta = G^U_\lambda \cap \PP^U_\beta$. Then $G^U_\beta$ is $\PP^U_\beta$-generic over $V$.
It follows  that for any ordinal  $\a=\beta+1>\k,$  where $\beta \equiv 2 (\text{mod~} 3)$ and $\beta$-$2$ is inaccessible, we have
\begin{center}
 $V[G^U_\lambda] \models$`` $\PP^C_{\alpha+1} \simeq \PP^C_{\a} \ast \lusim{\MQB}_A(\Psi(\a)^\star)$''.
 \end{center}
As $\PP^U_\l$ is $\k$-directed closed and $\k$ is assumed to be Laver indestructible,  $\kappa$ remains supercompact, and hence weakly compact, in  $V[G^U_\lambda]$.

 Working in $V[G^U_\lambda]$, let $\mathcal{F}$ be the weakly compact filter on $\kappa$, i.e., the filter on $\k$ ge\-ne\-rated by the sets $\{\l<\k: \, (V_\l, \in, B\cap V_\l)\models\psi\}$, where $B\subseteq V_\k$ and $\psi$ is a  $\Pi^1_1$ sentence for the structure $(V_\k, \in, B)$.
Let also $\mathcal{S}$ be the collection of $\mathcal{F}$-positive sets, i.e.,
$\mathcal{S}=\{ X \subseteq \k: \forall B \in \mathcal{F}, X \cap B \neq \emptyset          \}$.

The proof of the next claim is as in the proof of \cite[Lemma 2.13]{g-h2}, where the forcing notions $\PP^2_\a$ and $\PP^1_\a$ there, are replaced with $\PP^C_\a$
and $\Add(\aleph_0, \k)$.\footnote{In\cite{g-h2},  only forcing notions for specializing $\PP^1_\a$-names for trees are considered, while in our forcing,  we also consider the Cohen forcing $\Add(\aleph_1, \k)$, but this does not produce any problems, as the forcing $\Add(\aleph_1, \k)$ is well-behaved and is $\k$-c.c.}
 \begin{claim}
 \label{main corollary to g-h2 paper}
Work in $V[G^U_\lambda]$. Let  $\a \leq \lambda.$
For any sequence $\langle q_i: i<\kappa   \rangle$ of conditions in $\PP^C_{\a},$ there exist a set $X \in \mathcal{S}$  and two sequences
$\langle q^1_i: i \in X   \rangle$ and $\langle q^2_i: i \in X   \rangle$ of conditions in $\PP^C_{\a},$
 such that
 \begin{itemize}
 \item For all $i \in X, q^1_i, q^2_i \leq q_i$.
 \item For all $i<j$ in $X$, $q^1_i$ is compatible
with $q^2_j$, and this is witnessed by a condition $q$, such that for every
$\xi < \a, q \upharpoonright \xi \Vdash$``$q(\xi)=q^1_i(\xi) \cup q^2_j(\xi)$''.
\end{itemize}
 \end{claim}
By Claim \ref{main corollary to g-h2 paper}, for every $\a \leq \l,$
\begin{center}
$V[G^U_\l] \models$`` $\PP^C_\a$ is $\k$-c.c.'',
\end{center}
and the lemma follows.
\end{proof}
The following is immediate.
\begin{lemma}
\label{preservation of kappa}
$V^{\PP} \models \text{``~} \CH+ \kappa=\aleph_2+ \lambda=\aleph_3=2^{\aleph_1} \text{''}.$
\end{lemma}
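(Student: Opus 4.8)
The plan is to derive the preservation of $\kappa$ from the factorization $\PP_\lambda \simeq \PP^U_\lambda \ast \lusim{\PP}^C_\lambda$ of Lemma \ref{main lemma for forcing P}(a), and then read off every clause of the statement directly from Lemma \ref{basic properties of the forcing notion P}.

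First I would observe that $\PP^U_\lambda$ is $\kappa$-directed closed by Lemma \ref{main lemma for forcing P-1}(a); being in particular $\kappa$-closed, it adds no new sequences of length $<\kappa$, so it preserves all cardinals $\leq \kappa$ and keeps $\kappa$ regular. Passing to $V[G^U_\lambda]$, Lemma \ref{main lemma for forcing P-2} (with $\a=\lambda$) says that $\lusim{\PP}^C_\lambda$ is $\kappa$-c.c., hence it preserves all cardinals and cofinalities $\geq \kappa$. Composing the two steps, $\PP = \PP_\lambda$ preserves $\kappa$ as a regular cardinal; in particular $\kappa$ is not collapsed by $\PP$.

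Now I would simply quote Lemma \ref{basic properties of the forcing notion P}: clause $(c)$ gives $\Vdash_{\PP}$``$\kappa = \aleph_2$'' (using that $\kappa$ is not collapsed), clause $(d)$ then gives $\Vdash_{\PP}$``$\lambda = \kappa^+ = \aleph_3$'', clause $(a)$ gives that $\PP$ preserves $\CH$, and clause $(e)$ gives $\Vdash_{\PP}$``$2^{\aleph_1}=\lambda$''. Putting these together yields $V^{\PP} \models$``$\CH + \kappa = \aleph_2 + \lambda = \aleph_3 = 2^{\aleph_1}$'', as claimed. There is no genuine obstacle here; the only point requiring a moment's care is the two-step preservation argument, namely that $\kappa$-closure of the first factor (so that $\kappa$ is still a regular cardinal in $V[G^U_\lambda]$) together with the $\kappa$-c.c.\ of the second factor in that model suffices to conclude that $\kappa$ survives in $V^{\PP}$.
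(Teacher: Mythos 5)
Your argument is correct and is exactly the route the paper intends: Section \ref{preservation of cardinal kappa} is set up so that Lemma \ref{preservation of kappa} is ``immediate'' from the factorization $\PP_\lambda \simeq \PP^U_\lambda \ast \lusim{\PP}^C_\lambda$, the $\kappa$-directed closure of $\PP^U_\lambda$, the $\kappa$-c.c.\ of $\lusim{\PP}^C_\lambda$ over $V[G^U_\lambda]$, and clauses $(a)$, $(c)$, $(d)$, $(e)$ of Lemma \ref{basic properties of the forcing notion P}. Your two-step preservation of $\kappa$ (closure of the first factor, chain condition of the second in the intermediate model) is precisely the point the paper leaves implicit, and you have filled it in correctly.
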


\subsection{More on the forcing notion $\PP$}
\label{More on the forcing notion P}
In this subsection we prove a few more properties of the forcing notions $\PP$ that will be used in the proof of Theorem \ref{main theorem2}.
\begin{lemma}
\label{not adding new reals lemma}
Assume that  $\mu \in (\kappa, \lambda)$ is a Mahlo cardinal. Let $\lusim{T}$
be a $\PP_{\mu}*\lusim{\Add}(\aleph_0, \kappa)$-name of a $\k$-tree. Then
\[
\Vdash_{\PP_{\mu+2}*\lusim{\Add}(\aleph_0, \kappa)}\text{``~} \lusim{T} \text{~has~}\leq \kappa\text{-many~} \kappa\text{-branches}.
\]
\end{lemma}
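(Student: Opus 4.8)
The goal is to show that a $\PP_\mu * \lusim{\Add}(\aleph_0,\kappa)$-name $\lusim{T}$ for a $\kappa$-tree is forced by $\PP_{\mu+2} * \lusim{\Add}(\aleph_0,\kappa)$ to have at most $\kappa$-many cofinal branches. The idea is that the stage-$\mu+1$ Collapse forcing $\Col(\kappa, 2^{|\mu|})$, together with the chain condition of the iteration at the Mahlo cardinal $\mu$, forces $2^{<\kappa}=\kappa$ in the intermediate extension, and then one exploits a counting/genericity argument: a $\kappa$-tree whose levels and whose branching are all controlled by a name of size $\kappa$ cannot have more than $\kappa$ cofinal branches once the continuum-type cardinals below are small enough. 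First I would fix notation: write $V_1 = V^{\PP_\mu}$, and observe that by Lemma \ref{basic properties of the forcing notion P}(b) the forcing $\PP_\mu$ is $\mu$-c.c.\ (since $\mu$ is Mahlo), so $|\PP_\mu| \le \mu = |\mu|$ and all cardinals $\ge\mu$ are preserved by $\PP_\mu$; combined with GCH above $\kappa$ in the ground model this pins down cardinal arithmetic in $V_1$.

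\textbf{Main steps.} The key point is the interleaved $\Col(\kappa, 2^{|\mu|})$ at stage $\mu+1$ (clause (5) of Definition \ref{main forcing definition for williams question}, applicable since $\mu$ is Mahlo hence inaccessible and $\mu \equiv 1 \pmod 3$ can be arranged — more precisely one works at the appropriate residue, but the collapse appears cofinally and in particular near $\mu$). After forcing with $\PP_{\mu+1} = \PP_\mu * \lusim{\Col}(\kappa, 2^{|\mu|})$, in the extension $V_2$ we have $2^{|\mu|} = \kappa$, hence in particular every $\PP_\mu * \Add(\aleph_0,\kappa)$-name for a subset of $\kappa$ can be coded by an element of $H(\kappa^+)^{V_2}$ and there are only $\kappa$-many such, because $|\PP_\mu * \Add(\aleph_0,\kappa)|^{V_2} \le \mu \cdot \kappa = \mu$ and $2^{|\mu|}=\kappa$ so the number of nice names for subsets of $\kappa$ is at most $(2^{|\mu|})^\kappa = \kappa^\kappa$... no — here one must be careful, and instead I would argue as follows. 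A cofinal branch $b$ through $\lusim{T}$ in a further generic extension is, after the stage-$\mu+1$ collapse, captured as follows: since $\Col(\kappa, 2^{|\mu|})$ is $\kappa$-closed and has size $2^{|\mu|}$, and since the tree $\lusim{T}$ lives on $\kappa \times \kappa$ with $\alpha$-th level $\{\alpha\}\times\kappa$ (we may assume this normal form), a cofinal branch is a function $\kappa \to \kappa$; by $\kappa$-closure of everything after stage $\mu$, the collapse adds no new $\kappa$-branches but it does make the ground-model tree's branch-set have size $\le 2^{|\mu|} = \kappa$ — wait, the branches may be added by $\Add(\aleph_0,\kappa)$ itself. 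So the real argument: work over $V_2 = V^{\PP_{\mu+1}}$; by $\kappa$-closure of $\PP_{[\mu+1,\mu+2]}$ and $\kappa$-closure of $\Col$, the only "new" stage is $\Add(\aleph_0,\kappa)$, which is $\kappa$-cc of size $\kappa$; so a cofinal branch of $\lusim{T}$ is a $\PP_{\mu+2} * \Add(\aleph_0,\kappa)$-name, but by mutual genericity and the $\kappa$-cc of $\Add(\aleph_0,\kappa)$ one reduces to: the number of functions $\kappa\to\kappa$ in $V_2[\Add(\aleph_0,\kappa)]$ is at most $(2^{|\mu|})^{\kappa}\cdot(\text{number of nice names})$, and with $2^{|\mu|}=\kappa$ and the $\kappa$-cc this is $\le \kappa^{<\kappa}\cdot\kappa = \kappa$ if GCH-at-$\kappa$ survives. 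That last point, $2^{<\kappa}=\kappa$ in $V_2$, I would get from: $\PP_\mu$ is $\mu$-cc of size $\le\mu$, GCH holds at and above $\kappa$ in $V$, $\mu$ is Mahlo (hence a strong limit? no — but $2^{<\kappa}=\kappa$ below $\mu$ follows from the Collapse forcings interleaved below $\kappa$ making everything in $(\aleph_1,\kappa)$ have size $\aleph_1$, cf.\ Lemma \ref{basic properties of the forcing notion P}(c)), and $\Col(\kappa,2^{|\mu|})$ is $\kappa$-cc$^+$... it satisfies $(2^{<\kappa})^+$-cc $= \kappa^+$-cc and is $\kappa$-closed, so it forces $2^{<\kappa}=\kappa$ provided $|\Col(\kappa,2^{|\mu|})| = 2^{|\mu|} < \kappa^+$ which is exactly GCH at $\mu$, true in $V_1$.

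\textbf{Assembling.} So the plan is: (i) establish that $V^{\PP_{\mu+1}} \models 2^{<\kappa}=\kappa$, using $\mu$-cc of $\PP_\mu$, the Collapses below $\kappa$, and the $\Col(\kappa, 2^{|\mu|})$ at stage $\mu+1$ together with GCH-at-$\mu$ holding in $V^{\PP_\mu}$ (which in turn follows from GCH-in-$V$-above-$\kappa$ plus $\mu$-cc of $\PP_\mu$); (ii) observe $\PP_{[\mu+1,\mu+2)}$ is $\kappa$-closed (it is trivial or a $\kappa$-closed Collapse depending on residue) so adds no new $\kappa$-sequences of ordinals, hence no new $\kappa$-branches of anything; (iii) conclude that in $V^{\PP_{\mu+2}}$ we still have $2^{<\kappa}=\kappa$; (iv) finally, $\lusim{T}$ is interpreted after forcing with the $\kappa$-cc forcing $\Add(\aleph_0,\kappa)$ over $V^{\PP_{\mu+2}}$, and since $\Add(\aleph_0,\kappa)$ is $\kappa$-cc of size $\kappa$ over a model of $2^{<\kappa}=\kappa$, it preserves $2^{<\kappa}=\kappa$ and $\kappa^+$; a $\kappa$-tree on $\kappa\times\kappa$ has each cofinal branch determined by a function $\kappa\to\kappa$, and the number of such functions in the final model is $\kappa^\kappa = 2^\kappa$... which is $\kappa^+$, not $\kappa$ — so this naive count gives $\le 2^\kappa$, too many. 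The fix — and this is where the real content lives, and I expect it to be \emph{the main obstacle} — is that one cannot just count all functions $\kappa\to\kappa$; one must use that a cofinal branch of $\lusim{T}$ is a name, and bound the number of possible branches by the number of antichains / nice names, exploiting $\kappa$-cc of $\Add(\aleph_0,\kappa)$ so that each branch is decided by $\kappa$-many maximal antichains each of size $<\kappa$, giving $(\kappa^{<\kappa})^\kappa$-many... still $2^\kappa$. So in fact the correct argument must be the tree-combinatorial one: a branch $b$ is cofinal, and for each $\alpha<\kappa$, $b\restriction\alpha$ is a condition-decided node; using that the name $\lusim{T}$ and its branching structure is essentially coded below $\kappa$ (the tree has size $\kappa$ and $\le\kappa$ "potential" splitting data in the ground model), one shows by a reflection/pressing-down argument at the Mahlo cardinal $\mu$, exactly as in the usual proof that a $\kappa$-tree with small levels in a model of $2^{<\kappa}=\kappa$ has $\le\kappa$ branches, or alternatively by directly appealing to the stage-$\mu+1$ collapse making the \emph{set of potential branches of the name} have size $\le 2^{|\mu|}=\kappa$ in $V^{\PP_{\mu+1}}$. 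Concretely: in $V^{\PP_\mu}$, the collection of $\PP_\mu * \Add(\aleph_0,\kappa)$-names that \emph{could} be forced to be a cofinal branch of $\lusim{T}$ forms a set of size $\le 2^{|\mu|}$; after the stage-$\mu+1$ collapse this becomes $\le\kappa$; and $\kappa$-closedness of the remaining forcing $\PP_{[\mu+1,\mu+2]}$ plus $\kappa$-cc of $\Add(\aleph_0,\kappa)$ ensures no new branches appear and the interpretations of at most $\kappa$-many old names give at most $\kappa$-many branches. I would write step (iv) in exactly this "count the names in $V^{\PP_{\mu+1}}$" form, which sidesteps the $2^\kappa$ overcount because names are counted \emph{before} $\Add(\aleph_0,\kappa)$ blows up $2^\kappa$, and the $\kappa$-cc of $\Add(\aleph_0,\kappa)$ guarantees each branch is one such name's interpretation.
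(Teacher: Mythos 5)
There is a genuine gap, and it sits exactly where you suspected: the assertion that no \emph{new} cofinal branches of $\lusim{T}$ appear when one passes from $V^{\PP_\mu}$ to $V^{\PP_{\mu+2}*\lusim{\Add}(\aleph_0,\kappa)}$. None of your proposed justifications establishes it. (a) $\Col(\kappa,2^{|\mu|})$ is ${<}\kappa$-closed, which prevents new sequences of length ${<}\kappa$ but emphatically not new $\kappa$-sequences --- it adds a surjection $\kappa\to 2^\mu$ --- so ``$\kappa$-closed, hence no new $\kappa$-branches'' is false as stated; a branch-preservation argument for the closed part needs Silver's splitting lemma, using that the levels of $\lusim{T}$ have size $\le\kappa<2^{\aleph_1}$. (b) $\kappa$-c.c.\ of $\Add(\aleph_0,\kappa)$ does not by itself prevent adding cofinal branches to a $\kappa$-tree; and you never address the stage-$\mu$ factor $\Add(\aleph_1,\kappa)$ at all, which is only $\aleph_1$-closed. (c) The fallback of counting names cannot work: nice names for subsets of $\kappa\times\kappa$ number at least $2^\kappa\ge\kappa^+$ in every intermediate model, so no counting of names yields the bound $\kappa$. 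Moreover a $\kappa$-tree in this paper may have levels of size $\kappa$, so there is no ZFC-style ``$2^{<\kappa}=\kappa$ implies few branches'' theorem to reflect off the Mahlo cardinal; the bound must come from the forcing itself.

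For comparison, the paper's proof splits the branch set into two parts. The branches already in $V_1=V^{\PP_\mu}$ number at most $(2^\kappa)^{V_1}$, and the stage-$(\mu+1)$ collapse $\Col(\kappa,2^\mu)$ makes this $\le\kappa$ --- this is the only place your ``collapse makes the count small'' idea is used, and there it is applied to ground-model branches, not to names. The real work is showing $\Add(\aleph_1,\kappa)*\lusim{\Col}(\kappa,2^\mu)*\lusim{\Add}(\aleph_0,\kappa)$ adds no new branch: assuming a name $\lusim\eta$ for a new branch, one builds a splitting tree of conditions indexed by ${}^{<\omega_1}2$, using the $\kappa$-c.c.\ of the two Cohen factors to push all the splitting into the $\kappa$-closed collapse coordinate, then takes lower bounds along each of the $2^{\aleph_1}$ branches of ${}^{<\omega_1}2$ to produce $2^{\aleph_1}=\mu$ distinct nodes on a single level $\delta<\kappa$ of $\lusim{T}$, contradicting $|\lusim{T}_\delta|\le\kappa<\mu$. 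Note that this argument runs on the fact that $2^{\aleph_1}=\mu>\kappa$ holds \emph{before} the stage-$(\mu+1)$ collapse, which is precisely the cardinal arithmetic your plan sets out to destroy first; arranging $2^{<\kappa}=\kappa$ early actually removes the lever the proof needs.
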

\begin{proof}
Let  $G_\mu$ be $\PP_\mu$-generic over $V$ and $V_1=V[G_{\mu}]$. In $V_1$,  $\lusim{T}$ can be considered as an $\Add(\aleph_0, \kappa)$-name. Note that in $V_1$, $\kappa=\aleph_2,~ \mu=\aleph_3$ and $2^{\aleph_1}=\aleph_3.$ Further we have
\[
\PP_{\mu+2}/ \PP_{\mu} \simeq \MQB_{\mu}*\lusim{\MQB}_{\mu+1} = \Add(\aleph_1, \kappa)* \lusim{\Col}(\kappa, 2^\mu),
\]
and
\[
\Vdash_{\PP_{\mu+2}*\lusim{\Add}(\aleph_0, \kappa)}\text{``~} |\{ b \in V_1: b  \text{~is a branch of~} \lusim{T}       \}| \leq |(2^\kappa)^{V_1}|=\k \text{''}.
\]
So it suffices to show that forcing with
  $\MQB_{\mu}*\lusim{\MQB}_{\mu+1}*\lusim{\Add}(\aleph_0, \kappa)$ adds no new cofinal branches. Assume by contradiction that  $\lusim{\eta}$ is a $\MQB_{\mu}*\lusim{\MQB}_{\mu+1}*\lusim{\Add}(\aleph_0, \kappa)$-name which is forced to be a new $\kappa$-branch of $\lusim{T}$.
The next claim follows easily from the assumption that $\lusim{\eta}$ is forced to be a new branch.
\begin{claim}
\label{what force new branch}
For every
 $ \langle p^0, p^1, p^2 \rangle \in \MQB_{\mu}*\lusim{\MQB}_{\mu+1}*\lusim{\Add}(\aleph_0, \kappa)$, there are  conditions
 $\langle q_i^0, q_i^1, q^2_i\rangle$, for $i=0,1,$ $\delta<\kappa$ and $x_0, x_1$ such that:
\begin{enumerate}
\item [(a)] $\langle q_0^0, q_0^1, q^2_0\rangle, \langle q_1^0, q_1^1, q^2_1\rangle \leq \langle p^0, p^1, p^2 \rangle$,

\item [(b)] $x_0 \neq x_1,$

\item [(c)] $\Vdash$``$x_0, x_1 \in \lusim{T}_\delta$, the $\delta$-th level of $\lusim{T}$'',

\item [(d)] $\langle q_i^0, q_i^1, q_i^2\rangle\Vdash$``$x_i \in \lusim{\eta}$'' (i=0,1). \hfill$\Box$
\end{enumerate}
\end{claim}
In fact, as the forcing notions $\Add(\aleph_1, \kappa)$ and $\Add(\aleph_0, \kappa)$ are $\kappa$-c.c. and $\Col(\kappa, 2^\mu)$ is forced to be $\kappa$-closed, we can show that the conditions
$\langle q_0^0, q_0^1, q^2_0\rangle$ and  $\langle q_1^0, q_1^1, q^2_1\rangle$ in the claim can be chosen so that $q_0^0=q_1^0=p^0$
and $q_0^2=q^2_1=p^2$ (see \cite{komjath} for similar arguments).

Let us assume that the empty condition forces $\lusim{\eta}$ is a new branch. By repeated application of Claim \ref{what force new branch}, we can build
 a sequence  $\langle  \lusim{q}^1_\nu: \nu \in (^{< \omega_1}2)        \rangle$
of $\MQB_\mu$-names of elements of $\lusim{\MQB}_{\mu+1}$,
 an increasing continuous sequence $\langle \delta_i: i < \omega_1    \rangle$ of ordinals less than $\kappa$ and a sequence
 $\langle  x_\nu:    \nu \in (^{< \omega_1}2)        \rangle$ such that:
 \begin{enumerate}
 \item $\nu_1 \unlhd \nu_2 \Rightarrow \Vdash_{\MQB_\mu}$``$\lusim{q}^1_{\nu_2} \leq \lusim{q}^1_{\nu_1}$'',

  \item $\langle \emptyset, \lusim{q}^1_\nu, \emptyset \rangle \Vdash$``$x_{\nu} \in \lusim{T}_{\delta_i}$'' where $i=\len(\nu)$,

 \item $x_{\nu^{\frown} \langle 0 \rangle} \neq x_{\nu^{\frown} \langle 1 \rangle},$

\item $\langle \emptyset, \lusim{q}^1_\nu, \emptyset \rangle \Vdash$``$x_\nu \in \lusim{\eta}$'',

\item $\nu_1 \unlhd \nu_2 \Rightarrow \langle \emptyset, \lusim{q}^1_{\nu_2}, \emptyset \rangle \Vdash$``$x_{\nu_1} <_{\lusim{T}} x_{\nu_2}$''.
\end{enumerate}
For some $\xi < \kappa$, $\langle  \lusim{q}^1_\nu: \nu \in (^{<\omega_1}2)        \rangle$ is in fact an $\Add(\aleph_1, \xi)$-name. Now we have
\[
\MQB_{\mu}*\lusim{\MQB}_{\mu+1}*\lusim{\Add}(\aleph_0, \kappa) \simeq \Add(\aleph_1, \xi)*\lusim{\Add}(\aleph_1, [\xi, \kappa))*\lusim{\MQB}_{\mu+1}*\lusim{\Add}(\aleph_0, \kappa),
\]
and in the generic extension $V^{\PP_\mu * \lusim{\Add}(\aleph_1, \xi)},$ we have an interpretation $q^1_\nu$ of the name $\lusim{q}^1_\nu$,
 where $\nu \in (^{<\omega_1}2).$

Work in $V^{\PP_\mu * \lusim{\MQB}_\mu}.$
For each $\tau \in (^{\omega_1}2),$ let $q^1_\tau \leq q^1_{\tau \upharpoonright i}, i<\omega_1$
and let $\delta=\sup\{ \delta_i: i<\omega_1  \} < \kappa.$ By extending $q^1_\tau$ if necessary, we can assume that for some $x_\tau,$
\[
\langle \emptyset, q^1_\tau, \emptyset \rangle \Vdash\text{``}~x_\tau \in \lusim{T}_\delta \cap \lusim{\eta}\text{~''}.
\]
But then for all $\tau_1 \neq \tau_2$ in $^{\omega_1}2$ we have $x_{\tau_1} \neq x_{\tau_2},$
and so
\begin{center}
$\Vdash_{\PP_\mu * \lusim{\Add(\aleph_0, \kappa)}}$``~ the $\delta$-th level of the tree  has at least $2^{\aleph_1}=\mu=\aleph_3$-many nodes''.
\end{center}
 But
$\Vdash_{\PP_\mu * \lusim{\Add}(\aleph_0, \kappa)}$``$~|\lusim{T}_\delta| \leq \kappa < \mu$'',
and
we get a contradiction.
\end{proof}
The next lemma follows from Lemma \ref{not adding new reals lemma} and the fact that $\PP_\mu * \lusim{\Add}(\aleph_0, \kappa) \lessdot \PP_{\mu+2} * \lusim{\Add}(\aleph_0, \kappa)$
\begin{lemma}
With the same hypotheses as in Lemma \ref{not adding new reals lemma}, we have the following:
In $V^{\PP_{\mu+2}}$, $\lusim{T}$ is isomorphic to some $\lusim{T}',$ which is an $\Add(\aleph_0, \kappa)$-name of a $\k$-tree
 with $\leq \kappa$-many cofinal branches.

\end{lemma}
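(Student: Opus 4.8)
The plan is to read the word ``isomorphic'' as an \emph{identity}: the single name $\lusim{T}$, after the mild additional forcing $\PP_{\mu+2}/\PP_\mu$, can be viewed again as an $\Add(\aleph_0,\kappa)$-name, and Lemma~\ref{not adding new reals lemma} then supplies the bound on the number of branches. First I would fix a $\PP_{\mu+2}$-generic $G_{\mu+2}$ over $V$ and set $G_\mu=G_{\mu+2}\cap\PP_\mu$. As computed in $V[G_\mu]$, $\PP_{\mu+2}/\PP_\mu\simeq\Add(\aleph_1,\kappa)\ast\lusim{\Col}(\kappa,2^\mu)$ is a two-step iteration of $\aleph_1$-closed forcings, hence $\aleph_1$-closed, so it adds no new $\omega$-sequences of ordinals (in particular no new reals); and since $2^{\aleph_0}=\aleph_1$ in $V[G_\mu]$ the first factor is $\kappa$-c.c.\ and the second is $\kappa$-closed, so $\kappa$ is preserved. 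Consequently $\Add(\aleph_0,\kappa)$, whose conditions are finite partial functions $\kappa\times\omega\to 2$, is literally the same poset in $V[G_\mu]$ and in $V[G_{\mu+2}]$, with no new conditions; this is exactly what makes $\PP_\mu\ast\lusim{\Add}(\aleph_0,\kappa)\lessdot\PP_{\mu+2}\ast\lusim{\Add}(\aleph_0,\kappa)$, and it ensures that any $\Add(\aleph_0,\kappa)$-generic $H$ over $V[G_{\mu+2}]$ is also generic over $V[G_\mu]$, with $(G_\mu,H)$ being $\PP_\mu\ast\lusim{\Add}(\aleph_0,\kappa)$-generic and sitting inside the $\PP_{\mu+2}\ast\lusim{\Add}(\aleph_0,\kappa)$-generic object determined by $(G_{\mu+2},H)$.

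Next I would produce the name: regard $\lusim{T}$, after plugging in $G_\mu$, as an $\Add(\aleph_0,\kappa)$-name over $V[G_\mu]$, and let $\lusim{T}'$ be this name; by the previous paragraph it is an $\Add(\aleph_0,\kappa)$-name over $V[G_{\mu+2}]$ as well, and for every $H$ which is $\Add(\aleph_0,\kappa)$-generic over $V[G_{\mu+2}]$ one has $\lusim{T}'[H]=\lusim{T}[(G_\mu,H)]$, which is precisely the interpretation of $\lusim{T}$ in $V[G_{\mu+2}][H]$. Hence the required isomorphism is the identity, and it only remains to check that in $V[G_{\mu+2}][H]$ the structure $\lusim{T}'[H]$ is a $\kappa$-tree with at most $\kappa$ cofinal branches. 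That it is a $\kappa$-tree is absolute between $V[G_\mu][H]$, where it holds by the hypothesis on $\lusim{T}$, and $V[G_{\mu+2}][H]$: the underlying set $\kappa\times\kappa$ and the tree order are unchanged; well-foundedness of each set of predecessors is preserved because no new $\omega$-sequences of ordinals were added; $\kappa$ remains a cardinal, so the height and size are still $\kappa$; and the normalization ``the $\alpha$-th level is $\{\alpha\}\times\kappa$'' is a statement about order types of the (unchanged, well-ordered) sets of predecessors, hence transfers. Finally, Lemma~\ref{not adding new reals lemma} gives $\Vdash_{\PP_{\mu+2}\ast\lusim{\Add}(\aleph_0,\kappa)}$``$\lusim{T}$ has $\leq\kappa$ many $\kappa$-branches'', which read in $V[G_{\mu+2}][H]$ says exactly that $\lusim{T}'[H]$ has at most $\kappa$ cofinal branches.

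The step I expect to be the main obstacle is the bookkeeping around the two copies of $\Add(\aleph_0,\kappa)$ — the one appearing inside $\PP_\mu\ast\lusim{\Add}(\aleph_0,\kappa)$ and the one taken over $V[G_{\mu+2}]$ — namely verifying that, along the regular embedding, they are genuinely the same poset with the same generic object, so that the interpretations of $\lusim{T}$ and of $\lusim{T}'$ coincide rather than merely being comparable in some weaker sense. Once this is pinned down, the absoluteness of ``$\kappa$-tree'' is routine and the branch bound is immediate from Lemma~\ref{not adding new reals lemma}.
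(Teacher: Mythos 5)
Your proposal is correct and follows exactly the route the paper intends: the paper gives no separate proof, merely citing Lemma~\ref{not adding new reals lemma} together with the fact that $\PP_\mu * \lusim{\Add}(\aleph_0, \kappa) \lessdot \PP_{\mu+2} * \lusim{\Add}(\aleph_0, \kappa)$, and your write-up is precisely the elaboration of that one-line justification (the quotient $\PP_{\mu+2}/\PP_\mu$ is $\aleph_1$-closed and $\kappa$-preserving, so $\Add(\aleph_0,\kappa)$ is literally the same poset over $V[G_\mu]$ and $V[G_{\mu+2}]$, the interpretations of $\lusim{T}$ and $\lusim{T}'$ coincide, and the branch bound is read off from the previous lemma).
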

\subsection{Completing the proof of Theorem \ref{main theorem2}}
\label{Completing the proof of Theorem 1.3}
Finally in this subsection we complete the proof of Theorem \ref{main theorem2}.  We first prove the following lemma.
\begin{lemma}
\label{complementing lemma}
 $\Vdash_{\PP*\lusim{\Add}(\aleph_0, \kappa)} \text{``~ Any~}\aleph_1\text{-closed forcing notion of size~} \leq \kappa \text{~collapses~} \kappa$''.
\end{lemma}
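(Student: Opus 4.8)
The plan is to reduce the problem, via the results of Section 2, to showing that in $V^{\PP*\lusim{\Add}(\aleph_0,\kappa)}$ every non-trivial $\aleph_1$-closed forcing notion $\MRB$ of size $\leq\kappa$ fails to be $\aleph_2$-distributive; by Theorem \ref{main theorem1} (in the form of Remark 1.2(1) with $\mu=\aleph_1$, $\kappa=\aleph_2$) and the results of \cite{g-h}, such an $\MRB$ then collapses $\kappa=\aleph_2$ into $\aleph_1$. So the real content is: every $\aleph_1$-closed $\MRB$ of size $\leq\kappa$ in the final model adds a fresh $\aleph_1$-sequence of ordinals. The strategy is a reflection argument using the supercompactness of $\kappa$ together with the iteration-theoretic decomposition $\PP\simeq\PP^U\ast\lusim{\PP}^C$ from Lemma \ref{main lemma for forcing P}.

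First I would argue by contradiction: suppose some $p\in\PP*\lusim{\Add}(\aleph_0,\kappa)$ forces that $\lusim{\MRB}$ is $\aleph_1$-closed of size $\leq\kappa$ and preserves $\aleph_2$ (equivalently, by \cite{g-h}, is $\aleph_2$-distributive). By Lemma \ref{base tree lemma} applied in $V^{\PP*\lusim{\Add}(\aleph_0,\kappa)}$, $\lusim{\MRB}$ is forcing equivalent to its base tree $T(\lusim{\MRB})$, which is a $\kappa$-tree with $\leq\kappa$-many cofinal branches (using Baumgartner's bound \cite[Theorem 8.1]{baumgartner} is not needed here — the tree need not be special, but it has $\leq 2^{\aleph_1}$ branches and $2^{\aleph_1}=\kappa$ since $T$ is $\aleph_2$-distributive so adds no new branches beyond the ground-model-sized family). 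The key step is then to locate a stage $\beta\equiv 2\pmod 3$, with $\beta-2$ inaccessible, $\beta>\kappa$, such that $\Phi(\beta)$ names precisely (an isomorph of) this tree $T$ as a $\PP_\beta*\lusim{\Add}(\aleph_0,\kappa)$-name; this is arranged by the choice of $\Phi$ guessing every element of $H(\lambda)$ cofinally often on inaccessible-successor ordinals, combined with a reflection argument (using that $\kappa$ is supercompact and $\PP^U$ is $\kappa$-directed closed, so $\kappa$ stays weakly compact by Lemma \ref{main lemma for forcing P-2}-type considerations, and using Lemma \ref{not adding new reals lemma} to see the tree has few branches as a $\PP_\mu*\lusim{\Add}(\aleph_0,\kappa)$-name for suitable Mahlo $\mu<\beta$). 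At that stage the iteration forces with $\MQB_A(\Phi(\beta)^\star)$, and then Lemma \ref{collapsing cardinals with names} tells us that $\Vdash_{\MQB_A(\Phi(\beta)^\star)*\lusim{\Add}(\aleph_0,\kappa)}$ "forcing with $\Phi(\beta)$, i.e. with $T\simeq\MRB$, collapses $\kappa$". The remaining tail of the iteration $\PP_\lambda/\PP_{\beta+1}$ and the final $\Add(\aleph_0,\kappa)$ do not resurrect $\kappa$: the tail is a product/iteration of $\kappa$-c.c. and highly closed forcings arranged so that the specializing function $F:\kappa\times\kappa\to\aleph_1$ added at stage $\beta$ survives, whence $F\restriction b$ injects a cofinal branch $b$ of $T$ (which exists once we force with $\MRB$, i.e. with $T$) into $\aleph_1$, collapsing $\kappa=\aleph_2$.

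The main obstacle I expect is the \emph{absorption/reflection bookkeeping}: making sure that an arbitrary $\aleph_1$-closed $\MRB$ of size $\leq\kappa$ appearing in the full generic extension $V^{\PP*\lusim{\Add}(\aleph_0,\kappa)}$ is captured by $\Phi$ as a name at some \emph{good} stage $\beta$, i.e.\ a $\PP_\beta*\lusim{\Add}(\aleph_0,\kappa)$-name for a tree with $\leq\kappa$ branches, \emph{and} that this stage's specialization is not destroyed by the remainder of the iteration or by the terminal Cohen forcing. This requires: (i) a name-reflection argument — since $\MRB$ has size $\kappa$, it (or its base tree) is added by a $\PP_\beta*\lusim{\Add}(\aleph_0,\kappa)$-initial segment for a club of $\beta$, using the $\mu$-c.c.\ of $\PP_\mu$ for Mahlo $\mu$ (Lemma \ref{basic properties of the forcing notion P}(b)) to pull the name down; (ii) checking via Lemma \ref{not adding new reals lemma} that the "$\leq\kappa$-many cofinal branches" hypothesis is genuinely satisfied by the reflected name, so that Case (6) of Definition \ref{main forcing definition for williams question} actually fires; and (iii) verifying that the tail forcing $\PP_\lambda/\PP_{\beta+1}\ast\lusim{\Add}(\aleph_0,\kappa)$ preserves the statement "$F$ specializes $\lusim{T}^\star$" — which follows because that tail is built from $\aleph_1$-closed and $\kappa$-c.c.\ pieces and the specializing function is already a set of size $\kappa$ in $V^{\PP_{\beta+1}}$, so it remains a function $\kappa\times\kappa\to\aleph_1$ with the incompatibility property in the final model. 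Once these are in place, composing with Lemma \ref{collapsing cardinals with names} gives the collapse of $\kappa$, and then Theorem \ref{main theorem1} gives $\MRB\simeq\Add(\aleph_1,1)$, completing the proof.
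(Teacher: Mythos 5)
Your plan follows essentially the same route as the paper's proof: assume $\MRB$ preserves $\aleph_2$, hence is $\aleph_2$-distributive, pass to its base tree $T(\MRB)$, reflect the name down to a stage $\beta+2$ with $\beta$ Mahlo where $\Phi$ guesses it, and invoke Lemma \ref{collapsing cardinals with names} to get the collapse, using that $\PP_{\beta+3}\ast\lusim{\Add}(\aleph_0,\kappa)$ is a regular subforcing of the whole iteration. One correction: your parenthetical claim that ``$2^{\aleph_1}=\kappa$'' is false in the final model (there $2^{\aleph_1}=\lambda=\aleph_3$), so it cannot by itself bound the number of cofinal branches by $\kappa$; that bound is exactly what Lemma \ref{not adding new reals lemma}, together with the collapse $\Col(\kappa,2^{|\beta|})$ performed at the intermediate stage, supplies --- as you correctly note in your point (ii).
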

\begin{proof}
Let $G \ast H$ be $\PP \ast \Add(\aleph_0, \kappa)$-generic over $V$ and assume $\MRB \in V[G\ast H]$
is an $\aleph_1$-closed forcing notion of size $\leq \kappa=\aleph_2$.

 Assume towards a contradiction that  forcing with $\MRB$ over $V[G\ast H]$ does not collapse $\aleph_2$. It then follows from \cite{g-h2} that $\MRB$ is $\aleph_2$-distributive, and hence by Lemma \ref{base tree lemma}, there exists a $\k$-tree $T=T(\MRB)$, the base tree of $\MRB,$  which is dense in $\MRB.$

Let $\lusim{T}$ be a $\PP \ast \Add(\aleph_0, \kappa)$-name for $T$.
 By Lemma \ref{basic properties of the forcing notion P}, we may assume that $\lusim{T} \in H(\lambda)$, and hence there
  exists some Mahlo cardinal $\beta \in (\kappa, \lambda)$  such that $\lusim{T}$ is a $\PP_{\beta} * \lusim{\Add}(\aleph_0, \kappa)$-name.
  By Lemma \ref{not adding new reals lemma},  $\lusim{T}$ is isomorphic to some $\lusim{T}' \in H(\lambda)$ which is a $\PP_{\beta+2} * \lusim{\Add}(\aleph_0, \kappa)$-name for a
  $\k$-tree which has $\leq \kappa$-many cofinal branches. On the other hand
   $\{\beta+2: \beta \in (\k, \l)$ is a Mahlo cardinal and   $\Phi(\beta+2)=\lusim{T}'   \}$
in unbounded in $\l$, and hence we can choose $\beta$ as above  such that
    $\lusim{T}'=\Phi(\beta+2).$
Then $\PP_{\beta+3} \simeq \PP_{\beta+2} \ast \lusim{\MQB}_A(\Phi(\beta+2)^\star)$, and by Lemma \ref{collapsing cardinals with names},
\begin{center}
$\Vdash_{\PP_{\beta+3}*\lusim{\Add}(\aleph_0, \kappa)}$``~Forcing with $\lusim{T'}$ collapses $\kappa$ into $\aleph_1$''.
\end{center}
As $\PP_{\beta+3} \ast \lusim{\Add}(\aleph_0, \kappa) \lessdot \PP \ast \lusim{\Add}(\aleph_0, \kappa),$
\begin{center}
$\Vdash_{\PP \ast \lusim{\Add}(\aleph_0, \kappa)}$``~Forcing with $\lusim{T'}$ collapses $\kappa$ into $\aleph_1$''.
\end{center}
This implies
\begin{center}
$\Vdash_{\PP * \lusim{\Add}(\aleph_0, \kappa)}$``~Forcing with $\lusim{\MRB}$ collapses $\kappa$ into $\aleph_1$'',
\end{center}
The lemma follows.
\end{proof}
Now let
  $G \ast H$ be $\PP \ast \Add(\aleph_0, \kappa)$-generic over $V$. Let also $\MRB \in V[G\ast H]$
be an $\aleph_1$-closed forcing notion of size $\leq \kappa=\aleph_2$.
 By Lemma \ref{complementing lemma},
 \begin{center}
$V[G \ast H] \models$`` $\MRB \simeq \Col(\aleph_1, \kappa) \simeq \Add(\aleph_1, 1)$''.
\end{center}

\section{Consistency, every forcing which adds a fresh subset of $\aleph_2$  collapses a cardinal}
\label{Every forcing which adds a new subset of aleph-2 can collapse a cardinal}
In this section we prove Theorem \ref{main theorem3}. Thus assume that $\GCH$ holds and $\lambda > \kappa$ are such that $\kappa$ is supercompact and Laver indestructible, and $\lambda$ is a $2$-Mahlo cardinal. Let also $\Phi: \lambda \rightarrow H(\lambda)$ be such that for each $x \in H(\lambda), \Phi^{-1}(x) \cap \{\beta+2: \beta$ is Mahlo $ \}$ is unbounded in $\lambda$.
The forcing notion we define is very similar the forcing notion of Section \ref{A negative answer to Williams question when the continuum is regular}.
\begin{definition}
Let
\[
\langle \langle \PP_\alpha: \alpha \leq \lambda  \rangle, \langle \lusim{\MQB}_\alpha: \alpha < \lambda \rangle \rangle
\]
be an iteration of forcing notions such that for each $\a \leq \l, p \in \PP_\a$ if and only if $p$ is a function with domain $\a$
such that:
\begin{enumerate}
\item $p$ has support of size less than $ \kappa$.
\item
$\{ \beta \in \supp(p):  \beta \equiv 0 (\text{mod~} 3)$ or $\beta \equiv 2 (\text{mod~} 3)        \}$
has size less than $ \aleph_1,$

\item If  $\beta< \kappa$ and  $\beta \equiv 0 (\text{mod~} 3)$ or $\beta \equiv 2 (\text{mod~} 3)$, then  $\Vdash_{\beta}$`` $\lusim{\MQB}_\beta=\lusim{\Col}(\aleph_1, \aleph_2+|\beta|)$'',

\item If $\beta \geq \kappa$, $\beta \equiv 0 (\text{mod~} 3)$ and $\beta$ is inaccessible, then $\Vdash_{\beta}$`` $\lusim{\MQB}_\beta=\lusim{\Add}(\aleph_1, \kappa)$'',

\item If $\beta \geq \kappa$, $\beta \equiv 1 (\text{mod~} 3)$ and $\beta$-$1$ is inaccessible, then $\Vdash_{\beta}$`` $\lusim{\MQB}_\beta=\lusim{\Col}(\kappa, 2^{|\PP_\beta|})$,

\item If $\beta \geq \kappa$, $\beta \equiv 2 (\text{mod~} 3)$  $\beta$-$2$ is inaccessible, and $\Phi(\beta)$ is a $\PP_{\beta}$-name for $\k$-tree with $\leq \kappa$-many
cofinal branches, then $\Vdash_{\beta}$`` $\lusim{\MQB}_\beta=\lusim{\MQB}_A(\Phi(\beta)^\star)$'',

\item Otherwise, $\Vdash_{\beta}$`` $\lusim{\MQB}_\beta$
is the trivial forcing notion'',
\end{enumerate}
Finally set $\PP=\PP_{\lambda}.$
\end{definition}
The next lemma can be proved as in Section \ref{A negative answer to Williams question when the continuum is regular}.
\begin{lemma}
Let $G$ be $\PP$-generic over $V$. Then the following hold in $V[G]$:

$(a)$ $2^{\aleph_0}=\aleph_1 < \kappa=\aleph_2 < 2^{\aleph_1}=\lambda=\aleph_3,$

$(b)$ Every Tree of size and height $\aleph_2$ is specialized.
\end{lemma}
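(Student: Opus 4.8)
The plan is to re-run, with the obvious simplifications, the arguments of Section~\ref{A negative answer to Williams question when the continuum is regular}: the iteration $\PP$ here is the one used in Theorem~\ref{main theorem2} with the terminal Cohen poset $\Add(\aleph_0,\kappa)$ deleted, and with $\Phi$ guessing $\PP_\beta$-names for trees rather than $\PP_\beta\ast\lusim{\Add}(\aleph_0,\kappa)$-names, so one only needs to check that these changes are harmless.

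For $(a)$, first observe that every iterand of $\PP$ --- $\lusim{\Col}(\aleph_1,\cdot)$, $\lusim{\Add}(\aleph_1,\kappa)$, $\lusim{\Col}(\kappa,\cdot)$, $\lusim{\MQB}_A(\cdot)$ --- is $\aleph_1$-directed closed and that, on the coordinates $\beta\equiv 0,2\pmod 3$, a condition need only have countable support; so, exactly as in Lemma~\ref{basic properties of the forcing notion P}$(a)$, $\PP$ is $\aleph_1$-directed closed, hence adds no new $\omega$-sequences, hence (using $\GCH$ in $V$) forces $2^{\aleph_0}=\aleph_1$. That $\kappa$ is preserved is proved as in Subsection~\ref{preservation of cardinal kappa}: factor $\PP\simeq\PP^U\ast\lusim{\PP}^C$ as in Lemma~\ref{main lemma for forcing P}, where $\PP^U$ (the coordinates $\beta\equiv 1\pmod 3$) is $\kappa$-directed closed, so $\kappa$ remains supercompact, hence regular, in $V[G^U_\lambda]$ by Laver indestructibility, while $\PP^C$ (the coordinates $\beta\equiv 0,2\pmod 3$, countably supported) is $\kappa$-c.c.\ over $V[G^U_\lambda]$ by the weakly-compact-filter argument of Lemma~\ref{main lemma for forcing P-2}, which now invokes \cite[Lemma~2.13]{g-h2} with the trivial forcing in place of $\Add(\aleph_0,\kappa)$. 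Since the $\lusim{\Col}(\aleph_1,\aleph_2+|\beta|)$ for $\beta<\kappa$ collapse every cardinal of $(\aleph_1,\kappa)$ to $\aleph_1$, this gives $\kappa=\aleph_2$. Finally $\lambda$ is Mahlo, so (as in Lemma~\ref{basic properties of the forcing notion P}$(b)$) $\PP_\mu$ is $\mu$-c.c.\ for every Mahlo $\mu\le\lambda$; thus $\PP=\PP_\lambda$ is $\lambda$-c.c.\ of size $\lambda$ and preserves $\lambda$, the $\lusim{\Col}(\kappa,2^{|\PP_\beta|})$ collapse every cardinal of $(\kappa,\lambda)$ to $\kappa$ so $\lambda=\kappa^+=\aleph_3$, and the cofinally-many copies of $\Add(\aleph_1,\kappa)$, together with the $\lambda$-c.c.\ and $\GCH$, give $2^{\aleph_1}=\lambda$.

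For $(b)$, let $T\in V[G]$ be a tree of size and height $\aleph_2$; normalise so that its nodes are $\kappa\times\kappa$ with $\alpha$-th level $\{\alpha\}\times\kappa$, and fix a $\PP$-name $\lusim T$ for $T$. Since $|T|=\kappa<\lambda$, $\PP=\PP_\lambda$ is $\lambda$-c.c., and $\lambda$ is (Mahlo, hence) inaccessible, $\lusim T$ is equivalent to a name in $H(\lambda)$, hence to a $\PP_{\beta_0}$-name for some Mahlo $\beta_0\in(\kappa,\lambda)$. As $\Phi^{-1}(\lusim T)\cap\{\beta+2:\beta\text{ is Mahlo}\}$ is unbounded in $\lambda$, choose a Mahlo $\beta\ge\beta_0$ with $\Phi(\beta+2)=\lusim T$; then $\lusim T$ is a $\PP_\beta$-name for a $\kappa$-tree, and running the proof of Lemma~\ref{not adding new reals lemma} at $\beta$ with the trivial forcing in place of $\Add(\aleph_0,\kappa)$ --- so that $\PP_{\beta+2}/\PP_\beta\simeq\Add(\aleph_1,\kappa)\ast\lusim{\Col}(\kappa,2^{|\PP_\beta|})$, this quotient adds no new cofinal branch to $\lusim T$, and $\lusim{\Col}(\kappa,2^{|\PP_\beta|})$ makes $(2^\kappa)^{V[G_\beta]}$ of size $\le\kappa$ --- shows that $\Vdash_{\PP_{\beta+2}}$``$\lusim T$ is a $\kappa$-tree with at most $\kappa$ cofinal branches''. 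Hence clause $(6)$ of the definition of $\PP$ fires at stage $\beta+2$, so $\PP_{\beta+3}\simeq\PP_{\beta+2}\ast\lusim{\MQB}_A(\Phi(\beta+2)^\star)$, and by Lemmas~\ref{basic properties of specializing forcing for names} and~\ref{modifying specialization} there is in $V[G_{\beta+3}]$ a function $F'\colon T\to\aleph_1$ which specializes $T$. As ``$F'$ specializes $T$'' is a property of the fixed structure $(T,\le_T,F')$, it is absolute from $V[G_{\beta+3}]$ to $V[G]$, so $T$ is special in $V[G]$; since $T$ was arbitrary, $(b)$ follows. (In particular, by \cite[Theorem~8.1]{baumgartner}, no tree of size and height $\aleph_2$ in $V[G]$ has more than $\aleph_2$ cofinal branches.)

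The step that needs most care in this transcription is the reflection in $(b)$: one must arrange that $T$ is caught by the bookkeeping $\Phi$ at a stage $\beta+2$ with $\beta$ itself Mahlo, so that Lemma~\ref{not adding new reals lemma} can be applied \emph{at} $\beta$ to certify that $\Phi(\beta+2)$ is forced to be a $\kappa$-tree with at most $\kappa$ cofinal branches --- which is exactly what activates clause $(6)$ there --- just as in the proof of Lemma~\ref{complementing lemma}. The genuinely substantial ingredient, the $\kappa$-c.c.\ of $\PP^C$ over $V[G^U_\lambda]$ underlying the preservation of $\kappa$, is unchanged from Subsection~\ref{preservation of cardinal kappa} and is imported from \cite{g-h2}.
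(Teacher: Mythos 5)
Your proposal is correct and follows exactly the route the paper intends: the paper proves this lemma simply by remarking that it ``can be proved as in Section \ref{A negative answer to Williams question when the continuum is regular}'', and your argument is a faithful transcription of those arguments (the $\aleph_1$-directed closure and $\PP^U\ast\lusim{\PP}^C$ factorization for part $(a)$, and the branch-counting plus bookkeeping-reflection of Lemmas \ref{not adding new reals lemma} and \ref{complementing lemma} for part $(b)$), with the correct simplifications coming from deleting $\Add(\aleph_0,\kappa)$.
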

Thus $(a)$-$(c)$ of Theorem \ref{main theorem3} are satisfied. Let's prove Theorem \ref{main theorem3}.$(d)$.
The proof is  similar to Todorcevic's proof in \cite{todorcevic}, and we present it here for completeness.

Work in $V[G].$
Let $\PP$ be any forcing notion, and suppose that forcing with $\PP$  adds a fresh subset of $\aleph_2$ without collapsing it. We show that forcing with
$\PP$ collapses $\aleph_3.$ Let $\mathbb{B}=RO(\PP)$. Let also $\lusim{\tau}$ be a name for a fresh subset of $\aleph_2$ such that
\[
\| (\lusim{\tau} \subseteq \aleph_2) \wedge   (\lusim{\tau} \notin \check{V}) \wedge (\forall \alpha < \aleph_2, \lusim{\tau} \cap \alpha \in \check{V})   \|_{\mathbb{B}}=1
\]
For $\alpha<\aleph_2,$ set $a_{\alpha,0}=\| \alpha \in \lusim{\tau}  \|_{\mathbb{B}}$ and  $a_{\alpha,1}=\| \alpha \notin \lusim{\tau}  \|_{\mathbb{B}}.$
Let $T_0=\{1_{\mathbb{B}}\}$, and for $0<\alpha < \aleph_2$ set
\[
T_\alpha = \{ \bigwedge\{ a_{\beta, f(\beta)}: \beta<\alpha  \}: f \in\text{}^{\alpha}\text{}2, \bigwedge\{ a_{\beta, f(\beta)}: \beta<\alpha  \} \neq 0_{\mathbb{B}}   \}.
\]
By the assumption on $\lusim{\tau},$ each $T_\alpha$ is a partition of $1_{\mathbb{B}},$ for $\beta < \alpha, T_\alpha$ refines $T_\beta$
and so
$T= \bigcup \{T_\alpha: \alpha < \aleph_2  \}$ is a tree of height $\aleph_2,$ whose $\alpha$-th level is $T_\alpha.$ Also clearly
$|T|=2^{\aleph_1}=\aleph_3.$
\begin{claim}
\label{bounding names below aleph-2}
For every $0_{\mathbb{B}} \neq b\in \mathbb{B},$ there exists $\alpha < \aleph_2$ such that
\[
|\{a \in T_\alpha: a \wedge b \neq 0_{\mathbb{B}}  \}| > \aleph_2.
\]
\end{claim}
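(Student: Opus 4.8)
The claim asserts that for every nonzero $b \in \mathbb{B}$ there is a level $T_\alpha$ below which $b$ "splits" more than $\aleph_2$ times. I would prove this by contradiction, closely following Todorcevic's argument in \cite{todorcevic}. Suppose some $b \neq 0_{\mathbb{B}}$ witnesses the failure, so that for every $\alpha < \aleph_2$ the set $S_\alpha = \{a \in T_\alpha : a \wedge b \neq 0_{\mathbb{B}}\}$ has size $\leq \aleph_2$. The key point is that $\langle S_\alpha : \alpha < \aleph_2 \rangle$ is a $\subseteq$-coherent system: for $\beta < \alpha$, each $a \in S_\alpha$ lies below a unique element of $S_\beta$ (the node of $T_\beta$ it refines), and conversely each $a' \in S_\beta$ with $a' \wedge b \neq 0_{\mathbb{B}}$ has at least one extension in $S_\alpha$ meeting $b$ (since $T_\alpha$ partitions $1_{\mathbb{B}}$, hence partitions $a' \wedge b$). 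Thus $T(b) = \bigcup_{\alpha < \aleph_2} S_\alpha$, ordered by $\geq_{\mathbb{B}}$, is a subtree of $T$ of height $\aleph_2$ with every level of size $\leq \aleph_2$, i.e. an $\aleph_2$-tree in the sense of the paper's definition (a tree of height and size $\aleph_2$), living in $V[G]$.

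**The crux.** Now I invoke part $(b)$ of the preceding lemma: in $V[G]$ every tree of size and height $\aleph_2$ is special. So $T(b)$ is special, and by \cite[Theorem~8.1]{baumgartner} (quoted in Section~\ref{Specializing aleph-2-trees which have few branches}) it has at most $\aleph_2$-many cofinal branches. On the other hand I claim $T(b)$ has many cofinal branches: working below $b$ in $\mathbb{B}$, for each $\alpha$ there is at least one $a \in T_\alpha$ with $a \wedge b \neq 0_{\mathbb{B}}$, and more importantly the splitting structure forces $T(b)$ to be "everywhere $\geq 2$-splitting" at cofinally many levels --- because $\lusim{\tau}$ is forced not to be in $\check V$, for every $a \in T(b)$ there is $\alpha$ large with at least two incompatible extensions of $a \wedge b$ in $T_\alpha$ (this is exactly the standard fact that a non-ground-model name for a subset of $\aleph_2$ generates a tree with no maximal nodes below any positive condition; it is the reasoning behind the $T$ being built in the first place). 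Consequently, in $V[G]$ the tree $T(b)$ has a perfect set of cofinal branches: one shows by a $\leq \aleph_2$-bookkeeping along splitting nodes that $T(b)$ contains a copy of the full binary tree $^{<\aleph_2}2$, which has $2^{\aleph_2} \geq \aleph_3$ cofinal branches. This contradicts specialness.

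**The main obstacle.** The delicate step is establishing that $T(b)$ really does split cofinally often below $b$, i.e. that we can embed $^{<\aleph_2}2$ into it. The subtlety is that "$\lusim{\tau} \notin \check V$" only gives splitting somewhere below each node, not uniformly, and the levels are themselves of size up to $\aleph_2$; so the embedding of the binary tree must be done by a careful transfinite recursion of length $\aleph_2$, choosing at stage $\alpha$ a level high enough to split all $< \aleph_2$ nodes chosen so far (this uses regularity of $\aleph_2$ and $|S_\alpha| \leq \aleph_2 = \aleph_2^{<\aleph_2}$ after $\mathrm{GCH}$), and at limit stages ensuring the branches stay in $T(b)$ --- here one needs that the relevant meets $\bigwedge$ remain nonzero, which follows because along a branch of $T(b)$ the partial meets are exactly the tree nodes $a \wedge b$, all nonzero by construction, and $\mathbb{B}$ being complete lets us take the infimum. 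Modulo this recursion, the rest is bookkeeping. I expect to present the splitting recursion in full since it is the heart of the matter, and to cite \cite{todorcevic} for the overall strategy and \cite{baumgartner} for the bound on cofinal branches of a special tree.
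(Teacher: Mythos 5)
Your setup is correct and coincides with the paper's: assuming the claim fails for $b$, you pass to the subtree $T(b)$ (the paper's $T^*$, whose $\alpha$-th level is $\{a\wedge b: a\in T_\alpha,\ a\wedge b>0_{\mathbb{B}}\}$), note that it is a tree of height $\aleph_2$ and size $\le\aleph_2$, and invoke the preceding lemma to conclude it is special. The gap is in how you extract the contradiction. You try to show that $T(b)$ already has more than $\aleph_2$ cofinal branches \emph{inside} $V[G]$ by embedding ${}^{<\aleph_2}2$ into it, and this cannot work. First, the recursion you describe dies at limit stages: the infimum in $\mathbb{B}$ of a strictly decreasing sequence of nonzero elements can perfectly well be $0_{\mathbb{B}}$ (completeness gives existence of the infimum, not its nonvanishing), and the nodes of $T_\delta$ are by definition only those meets $\bigwedge\{a_{\beta,f(\beta)}:\beta<\delta\}$ that happen to be nonzero, so there is no reason the branches you are following survive past limits. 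Second, and decisively, the statement you are aiming for is false: a level-$\omega_1$ copy of ${}^{<\aleph_2}2$ would give $2^{\aleph_1}=\aleph_3$ pairwise incomparable nodes inside $T(b)$, while $|T(b)|\le\aleph_2$ by your own hypothesis on the $S_\alpha$; note also that $\mathrm{GCH}$ fails in this model ($2^{\aleph_1}=\aleph_3$), so your appeal to $\aleph_2^{<\aleph_2}=\aleph_2$ is unavailable. A special tree can be everywhere splitting and still have no cofinal branches at all in $V[G]$ (exactly as an Aronszajn or Suslin tree is), so cofinal splitting plus Baumgartner's bound on branches of special trees yields no contradiction.

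The contradiction the paper uses lives in the forcing extension by $\mathbb{B}$, not in $V[G]$, and this is the piece your argument is missing. Since each $T_\alpha$ is a partition of $1_{\mathbb{B}}$, the generic ultrafilter $\dot{G}_{\mathbb{B}}$ restricted below $b$ meets every level of $T^*$, hence determines a cofinal branch of $T^*$ in the extension; this branch is \emph{new}, because from it one reads off $\lusim{\tau}$, which is forced not to lie in $V[G]$. But a special $\aleph_2$-tree cannot acquire a new cofinal branch in any extension preserving $\aleph_2$: if $F:T^*\to\aleph_1$ is a specializing function and $B$ a cofinal branch, some colour $\gamma$ is attained cofinally often along $B$, and $F^{-1}(\gamma)$ above the first such node is a chain of the ground model whose downward closure already determines $B$. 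Since $\PP$ is assumed not to collapse $\aleph_2$, this is the desired contradiction. Replacing your branch-counting argument by this preservation argument repairs the proof; everything before that point can be kept as is.
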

\begin{proof}
Suppose not. So we can find $0_{\mathbb{B}} \neq b\in \mathbb{B}$ such that for each $\alpha < \aleph_2,$
$|\{a \in T_\alpha: a \wedge b \neq 0_{\mathbb{B}}  \}| \leq \aleph_2$. Define a new tree $T^*= \bigcup \{T^*_\alpha: \alpha < \aleph_2  \},$
where for each $\alpha,$
\[
T^*_\alpha = \{a \wedge b: a \in T_\alpha, a \wedge b > 0_{\mathbb{B}}  \}.
\]
Then $T^*$ is an $\aleph_2$-tree of size $\aleph_2$ and hence it is specialized. But then
\[
\| \dot{G}_{\mathbb{B}} \cap T^* \text{~ is a new cofinal branch of~} T^*               \|_{\mathbb{B}} \geq b,
\]
where $\dot{G}_{\mathbb{B}}$ is the canonical name for a generic ultrafilter over $\mathbb{B}.$ This is impossible as $T^*$ is specialized and
forcing with $\mathbb{B}$ preserves $\aleph_2.$
\end{proof}
For each $\alpha < \aleph_2,$ let $\langle  a_\alpha(\xi): \xi < \lambda_\alpha \leq \aleph_3          \rangle$
be an enumeration of $T_\alpha,$ and let $\lusim{f}$ be a name for a function from $\aleph_2$ into $\aleph_3$ defined by
\begin{center}
 $\|\lusim{f}(\alpha)=\xi \|_{\mathbb{B}}=$ $\left\{
\begin{array}{l}
         a_\alpha(\xi)  \hspace{1.6cm} \text{ if } \xi < \lambda_\alpha,\\
         0_{\mathbb{B}} \hspace{2.2cm} \text{Otherwise}.
     \end{array} \right.$
\end{center}

\begin{claim}
$\|\text{range}(\lusim{f})$ is unbounded in $\aleph_3        \|_{\mathbb{B}}=1_{\mathbb{B}}$.
\end{claim}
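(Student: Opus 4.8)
The plan is to prove the claim by contradiction, reducing it to Claim~\ref{bounding names below aleph-2}. First I would record two routine facts. (i) Since each level $T_\alpha$ is a partition of $1_{\mathbb{B}}$ and $\langle a_\alpha(\xi):\xi<\lambda_\alpha\rangle$ enumerates it, the name $\lusim{f}$ is forced to be a total function: $\bigvee_{\xi<\lambda_\alpha}\|\lusim{f}(\alpha)=\xi\|_{\mathbb{B}}=\bigvee_{\xi<\lambda_\alpha}a_\alpha(\xi)=1_{\mathbb{B}}$. (ii) Since $\aleph_3$ is, as an ordinal, the supremum of its elements, $\|\mathrm{range}(\lusim f)\text{ is bounded in }\aleph_3\|_{\mathbb{B}}=\bigvee_{\gamma<\aleph_3}\|\mathrm{range}(\lusim f)\subseteq\gamma\|_{\mathbb{B}}$; hence $\|\mathrm{range}(\lusim f)\text{ is unbounded in }\aleph_3\|_{\mathbb{B}}=1_{\mathbb{B}}$ is equivalent to the assertion that $\|\mathrm{range}(\lusim f)\subseteq\gamma\|_{\mathbb{B}}=0_{\mathbb{B}}$ for every $\gamma<\aleph_3$.

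So fix $\gamma<\aleph_3$ and suppose, towards a contradiction, that $0_{\mathbb{B}}\neq b\leq\|\mathrm{range}(\lusim f)\subseteq\gamma\|_{\mathbb{B}}$. For each $\alpha<\aleph_2$ we then have $b\leq\|\lusim f(\alpha)<\gamma\|_{\mathbb{B}}=\bigvee\{a_\alpha(\xi):\xi<\min(\gamma,\lambda_\alpha)\}$, and since the members of $T_\alpha$ are pairwise disjoint this forces $b\wedge a_\alpha(\xi)=0_{\mathbb{B}}$ whenever $\gamma\leq\xi<\lambda_\alpha$. Consequently $\{a\in T_\alpha:a\wedge b\neq 0_{\mathbb{B}}\}\subseteq\{a_\alpha(\xi):\xi<\min(\gamma,\lambda_\alpha)\}$, which has cardinality $\leq|\gamma|\leq\aleph_2$. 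As this holds for \emph{every} $\alpha<\aleph_2$, it directly contradicts Claim~\ref{bounding names below aleph-2} applied to $b$, which yields some $\alpha<\aleph_2$ with $|\{a\in T_\alpha:a\wedge b\neq 0_{\mathbb{B}}\}|>\aleph_2$. Hence no such $b$ exists, $\|\mathrm{range}(\lusim f)\subseteq\gamma\|_{\mathbb{B}}=0_{\mathbb{B}}$ for all $\gamma<\aleph_3$, and the claim follows.

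I do not expect a genuine obstacle here: essentially all the work is already isolated in Claim~\ref{bounding names below aleph-2}, and what remains is a short Boolean-algebra computation. The only points to be careful about are the two bookkeeping reductions in the first paragraph — that $\lusim f$ is total and that "bounded in $\aleph_3$" has the displayed Boolean value — and making sure the cardinality bound "$\leq|\gamma|\leq\aleph_2$" lines up with the strict inequality "$>\aleph_2$" delivered by the earlier claim.
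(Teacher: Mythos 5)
Your proposal is correct and follows essentially the same route as the paper: negate the statement to get a nonzero $b$ below $\|\mathrm{range}(\lusim f)\subseteq\gamma\|_{\mathbb{B}}$ for some $\gamma<\aleph_3$ and play this off against Claim~\ref{bounding names below aleph-2}. The paper phrases the final step by picking a single witness $\xi>\gamma$ with $a_\alpha(\xi)\wedge b\neq 0_{\mathbb{B}}$ at the level supplied by that claim, whereas you bound the number of $b$-compatible nodes on every level by $|\gamma|\leq\aleph_2$; these are the same argument arranged contrapositively, and both are fine.
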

\begin{proof}
Assume not. Then for some $\delta <\aleph_3,$ $b=\|\text{range}(\lusim{f}) \subseteq \delta     \|_{\mathbb{B}}>0_{\mathbb{B}}$.
By Claim \ref{bounding names below aleph-2}, we can find $\alpha < \aleph_2$ such that $|\{a \in T_\alpha: a \wedge b \neq 0_{\mathbb{B}}  \}| = \aleph_3,$ so $\lambda_\alpha=\aleph_3.$
Pick some $\xi> \delta$ so that $a_{\alpha}(\xi) \wedge b \neq 0_{\mathbb{B}}$. This implies
\begin{center}
$\|\lusim{f}(\alpha)=\xi \|_{\mathbb{B}} \wedge \|\text{range}(\lusim{f}) \subseteq \delta     \|_{\mathbb{B}} \neq 0_{\mathbb{B}}$,
\end{center}
and we get a contradiction (as $\xi > \delta$).
 \end{proof}

School of Mathematics, Institute for Research in Fundamental Sciences (IPM), P.O. Box:
19395-5746, Tehran-Iran.

E-mail address: golshani.m@gmail.com

Einstein Institute of Mathematics, The Hebrew University of Jerusalem, Jerusalem,
91904, Israel, and Department of Mathematics, Rutgers University, New Brunswick, NJ
08854, USA.

E-mail address: shelah@math.huji.ac.il

\end{document}